\documentclass{amsart}

\usepackage[english]{babel}
\usepackage{amssymb}
\usepackage{csquotes}
\newcommand{\QQ}{\mathbb{Q}}
\usepackage[all]{xy}
\usepackage{enumitem}

\usepackage{amsmath}

\usepackage{graphicx}

\usepackage[backend=biber,style=numeric,
  doi=false,
  url=false]{biblatex}
\usepackage{bm}

\usepackage{ytableau}
\usepackage{tikz}

\ytableausetup{smalltableaux}
\DeclareMathOperator{\Supp}{Supp}

\usepackage{xcolor}
\usepackage[
  colorlinks=true,
  linkcolor=blue,
  citecolor=red,
  urlcolor=blue
]{hyperref}

\usepackage[a4paper,
  left=30mm,
  right=30mm,
  top=28mm,
  bottom=32mm
]{geometry}

\newtheorem{theorem}{Theorem}[section]
\newtheorem{lemma}[theorem]{Lemma}
\newtheorem{proposition}[theorem]{Proposition}
\newtheorem{corollary}[theorem]{Corollary}
\newtheorem{conjecture}[theorem]{Conjecture}

\theoremstyle{definition}
\newtheorem{definition}[theorem]{Definition}
\newtheorem{example}[theorem]{Example}

\theoremstyle{remark}
\newtheorem{remark}[theorem]{Remark}

\numberwithin{equation}{section}

\newcommand{\NN}{\mathbb{N}}
\newcommand{\ZZ}{\mathbb{Z}}

\newcommand{\bA}{\mathbb{A}}

\newcommand{\rar}{\rightarrow}
\newcommand{\irar}{\hookrightarrow}

\newcommand{\rminor}{(r+1)\times(r+1)}

\newcommand{\exclude}[1]{}

\newcommand{\Sym}{\textnormal{Sym}}

\newcommand{\shape}{\textnormal{sh}}
\newcommand{\rk}{\textnormal{rk}}
\newcommand{\str}{\textnormal{str}}

\newcommand{\len}{\textnormal{len}}

\newcommand{\init}{\textnormal{init}}

\newcommand{\GL}{\textnormal{GL}}

\newcommand{\SD}{\textnormal{SD}}

\title[Symmetric and Isotypic Hilbert Series for Symmetric Ideals]{Symmetric and Isotypic Hilbert Series for Symmetric Ideals}

\author{Henri Breloer}
\address{Department of Mathematics and Statistics, UiT - the Arctic University of Norway, 9037 Tromsø, Norway}
\email{henri.l.breloer@uit.no}

\author{Cordian Riener}
\address{Department of Mathematics and Statistics, UiT - the Arctic University of Norway, 9037 Tromsø, Norway}
\email{cordian.riener@uit.no}

\thanks{This work has been supported by European Union’s HORIZON-MSCA-2023-DN-JD programme under the Horizon Europe (HORIZON) Marie Skłodowska-Curie Actions, grant agreement 101120296 (TENORS)}

\begin{document}
\begin{abstract}
     An ideal in a polynomial ring is symmetric if it is invariant under any permutation of variables. In this paper, we define and study the symmetric and isotypic Hilbert series for symmetric ideals in a polynomial ring with countably many variables. The symmetric Hilbert series is the limit of the Hilbert series of the invariant parts of the finite truncated quotients, while the isotypic Hilbert series records stable multiplicities of irreducible symmetric-group representations for each degree. Our main result proves that, under a mild support condition on the ideal, the symmetric Hilbert series is a rational function. We further show that this rationality extends to the isotypic Hilbert series for every irreducible representation. The proofs of these results rely on the monomial structure of the polynomials within the symmetric ideal, combined with Kostka inversion for the isotypic case.
\end{abstract}
\maketitle

\section{Introduction}\label{sec_intro}
 
The infinite  polynomial ring $K[x_1,x_2,\dots]$ in countably many variables is, both from the viewpoint of commutative algebra and geometry, a remarkably rich object. It is
non-Noetherian and therefore more complicated than a finitely generated polynomial ring, but at the same time it is one of the
simplest infinite-dimensional rings one can write down. The situation becomes even more interesting if one considers symmetry on this ring and finiteness phenomena appear once symmetry is taken
into account. The
first sign of such phenomena is a theorem of Cohen \cite{Coh67,Coh87}, rediscovered by
Aschenbrenner and Hillar \cite{AH07}: every $\Sym(\infty)$-invariant ideal of $K[X]$
satisfies the ascending chain condition. Hillar and Sullivant \cite{HS12} extended this idea of finiteness up to symmetry to define an equivariant Gröbner basis theory with applications in algebraic statistics, and
Draisma \cite{Dra19} placed the phenomenon in a broader frame, proving that any
polynomial functor over an infinite field is topologically Noetherian.

A second strand of the commutative algebra in this infinite setup concerns Hilbert series. For a $\Sym$-invariant chain of
graded ideals $(I_n)_{n\ge 1}$, Nagel and Römer \cite{NR17} introduced the
\emph{equivariant Hilbert series}
\[
H_{(I_n)}(s,t) = \sum_{n\ge 0} H({K[X_n]/I_n};\,t)\,s^n,
\]
and proved it to be rational in $s$ and $t$; an alternative proof via formal languages
is given in \cite{KLS17}. From this rationality one reads off that the dimension of
$K[X_n]/I_n$ grows eventually linearly in $n$ and its degree eventually exponentially,
with the precise rates determined in \cite{NR17,LNNR20}. Further asymptotic invariants
along such chains, for example, regularity, projective dimension, Betti tables, are surveyed in
\cite{JLR20}, and for ideals generated by general forms the asymptotic stability of
Hilbert functions and Betti numbers is established in \cite{SS25}. A parallel theory in
the $\GL_\infty$-equivariant setting of twisted commutative algebras was developed by
Sam and Snowden \cite{SS18}, who proved rationality of Hilbert series for finitely
generated modules. In our work we focus on the $\Sym(\infty)$-structure throughout. The
geometry underlying these results in this setting, the equivariant algebraic geometry, the structure of the equivariant
spectrum has been described by Kummer and Riener \cite{KR25} and  by Nagpal and Snowden \cite{NS21}.

In this paper we look for a more direct analogue of the classical Hilbert series for a
single symmetric ideal $I\subset K[X]$. The ordinary series is unavailable, since each
graded piece of $I$ is either zero or infinite-dimensional. Passing to the truncations
\[
I_1 \subset I_2 \subset \cdots \subset I_n \subset \cdots, \qquad I_n = I\cap K[X_n],
\]
each $I_n$ is $\Sym(n)$-stable with finite-dimensional graded pieces, so we may
decompose
\[
[K[X_n]/I_n]_d = \bigoplus_{\lambda\vdash n} c_\lambda \, V^\lambda
\]
into irreducible $\Sym(n)$-representations, each appearing with multiplicity $c_\lambda$. By the hook length formula
$\dim_K V^\lambda = n!/\prod h_\lambda(i,j)$ (see \cite{Sag01}), these dimensions grow
without bound with $n$, except for the two extremal partitions $\lambda=(n)$ and
$\lambda=(1^n)$, the trivial and sign representations. The equivariant Hilbert series of
\cite{NR17} records this growth. To extract instead the \emph{stable} behaviour as
$n\to\infty$, the natural starting point is the trivial representation, whose
multiplicity does not grow with $n$; more generally, representation stability
\cite{CEF15} suggests tracking the multiplicity of $V^{\mu(n)}$, the representation
indexed by a fixed partition $\mu$ padded in its first part, which again stabilises. The
results of this paper address both settings.

\medskip

\noindent\textbf{The symmetric Hilbert series.}
For each $n$, the subring of $\Sym(n)$-invariants
\[
\bigl(K[X_n]/I_n\bigr)^{\!\Sym}
\;\cong\;
K[X_n]^{\Sym}/I_n^{\Sym}
\]
is a graded module over the ring of symmetric polynomials $K[X_n]^{\Sym}$,
whose degree-$d$ part has finite dimension over $K$ for every $d$.  Its Hilbert
series $H((K[X_n]/I_n)^{\Sym};t)$ is therefore well-defined.  It is not hard to see that 
these series stabilize as $n\to\infty$ coefficient-wise, that is the coefficient of each $t^d$
eventually becomes constant. Indeed, stabilization in each degree follows from the fact that $K[X_n]^{\Sym}$ is
generated by elementary symmetric polynomials $e_1,\dots,e_n$ and stabilizes in
degree $d$ once $n\ge d$, while $I_n^{\Sym}$ grows monotonically and is bounded
above.
Therefore, we can consider a limit series and it is natural to ask   whether this limit is rational. The key
condition we identify for rationality of the limit is:
 
\begin{enumerate}
\item[$(\dagger)$] There exists an $f\in I$ written $f=\sum_{i=1}^N c_i m_i$ with
$c_i\in K^\times$ and $m_i$ monomials, such that
$\Supp(m_i) \not\subset \Supp(m_1)$ for all $i\neq 1$.
\end{enumerate}
 
Here $\Supp(m)$ denotes the set of variables appearing in $m$. 
Condition~$(\dagger)$ holds trivially if $I$ contains a monomial, and its role is to bound the rank of
the leading term in the symmetric limit, forcing the rank decomposition of
$\Lambda/\init(I_\infty)$ into finitely many pieces each handled by Hilbert--Serre.

Moreover, a symmetric ideal generated by polynomials generic in their support must contain a monomial. This is a consequence of \cite{Kre23}. We see that condition $(\dagger)$ holds for almost all symmetric ideals.
 
\begin{theorem}\label{thm_main}
Let $I \subset K[X]$ be a homogeneous symmetric ideal.  Then the limit
\[
\lim_{n\rightarrow\infty} H\big((K[X_n]/I_n)^\Sym;\, t\big)
\]
exists as a formal power series with non-negative integer coefficients.
Moreover, it is a rational function provided condition~$(\dagger)$ holds.
\end{theorem}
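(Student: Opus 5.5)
We first prove existence of the limit, then rationality. For each $n$ and $d$, a $K$-basis of $K[X_n]^{\Sym}_d$ is given by the monomial symmetric polynomials $m_\lambda(x_1,\dots,x_n)$ with $\lambda\vdash d$ and $\ell(\lambda)\le n$. For $n\ge d$ this index set is all partitions of $d$, so $\dim_K K[X_n]^{\Sym}_d=p(d)$. The Reynolds operator (or orbit sums, which avoids characteristic issues) gives $(K[X_n]/I_n)^{\Sym}\cong K[X_n]^{\Sym}/I_n^{\Sym}$.

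Next we compare levels $n$ and $n+1$. Setting $x_{n+1}=0$ maps $m_\lambda^{(n+1)}$ to $m_\lambda^{(n)}$, and it maps $I_{n+1}$ into $I_n$, because $I$ is symmetric and the specialization is a ring map fixing $K[X_n]$ ($f(x_1,\dots,x_n,0)$ lies in $I$ by substituting; more carefully, homogeneous $I$ symmetric is stable under $x_{n+1}\mapsto 0$ since one may swap $x_{n+1}$ with a far variable and use that the ideal is generated in finitely many variables up to symmetry). Conversely, symmetrizing an element of $I_n^{\Sym}$ of degree $d\le n$ lifts it to $I_{n+1}^{\Sym}$. Hence for $n\ge d$ the degree-$d$ part of $I_n^{\Sym}$, written in the basis $m_\lambda$, is independent of $n$. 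Identifying everything inside $\Lambda$, the ring of symmetric functions, we obtain a homogeneous ideal $I_\infty\subset\Lambda$ with $\dim(K[X_n]/I_n)^{\Sym}_d=\dim(\Lambda/I_\infty)_d$ for $n\ge d$. The limit is therefore $H(\Lambda/I_\infty;t)$, which has nonnegative integer coefficients.

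For rationality, we fix a term order on the monomial symmetric basis, say dominance refined lexicographically, compatible with multiplication of leading terms: $\init(m_\lambda m_\mu)=m_{\lambda\cup\mu}$ when we order so that the partition with the most parts leads (reverse lex on $\ell$ first). Then $\Lambda/I_\infty$ and $\Lambda/\init(I_\infty)$ have the same Hilbert series, and $\init(I_\infty)$ is a monomial-like ideal in the $m_\lambda$ basis. Condition $(\dagger)$ gives, by symmetrizing $f$ times suitable monomials, an element of $I_\infty$ whose leading partition has length $r=|\Supp(m_1)|$. More precisely, consider the orbit-sum of $m_1\cdot g$: the condition $\Supp(m_i)\not\subset\Supp(m_1)$ ensures the terms $m_i$ cannot collapse onto $m_1$'s pattern. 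Multiplying by further variables in $\Supp(m_1)$ then shows that every partition of length $r$ whose parts dominate a fixed bound lies in $\init(I_\infty)$. Consequently the standard partitions split into finitely many classes: those of length $<r$, and for each fixed length $k<r$ a finite union of translates of $\NN^k$-type cones. The ring $K[m_{(a_1,\dots,a_k)}]$ of partitions with at most $k$ parts, modulo a monomial ideal, is then a finitely generated module over $K[p_1,\dots]$-style truncations, equivalently a quotient of the monoid algebra of $\{a_1\ge\dots\ge a_k\}$, which is finitely generated. Hilbert--Serre then yields rationality for each rank piece, and summing finitely many rational functions gives the claim.

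The main obstacle is the bounding step: showing from $(\dagger)$ that $\init(I_\infty)$ contains all sufficiently large partitions of some length $r$. This requires both the correct choice of order and the check that the leading term of the symmetrized product really is the $m_1$-pattern. If this fails, I would first try ordering by support size, so that the leading term maximizes the number of parts, and then use $(\dagger)$ directly: no other monomial has support contained in $\Supp(m_1)$, so after multiplying by a high power of $\prod_{x\in\Supp m_1}x$, the $m_1$ pattern dominates.
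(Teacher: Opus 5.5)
There is a genuine gap, and it is at the step you flag yourself. You order the monomial symmetric functions so that partitions with \emph{more} parts lead, but $(\dagger)$ produces the opposite situation. After multiplying $f$ by $x_1^{b-\alpha_1}\cdots x_a^{b-\alpha_a}$ and symmetrizing, the $m_1$-term becomes the block $(b^a)$ with $a$ parts. Every other term contains a variable outside $\Supp(m_1)$ and so has more than $a$ parts. In your order the $m_1$-pattern is therefore the \emph{smallest} term. For example, $f=x_1^2-x_2^2$ times $x_1$ symmetrizes to $(3)-(2,1)$, whose leading term for you is $(2,1)$. Your fallback of maximizing the number of parts is the same order.

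The order that works is the reverse one: fewer parts is larger, with ties broken lexicographically. Under it the initial term is $(b^a)$ (Lemma~\ref{lemma_dagger_block_eq}). But then you can no longer argue via leading terms of ordinary products in the ring $\Lambda$. In fact $I_\infty$ is not an ideal of that ring. Your claim that $x_{n+1}\mapsto 0$ maps $I_{n+1}$ into $I_n$ is false: for $I=(x_i-x_j)$ one has $x_1-x_2\in I$ but $x_1\notin I$. So only the one-sided embeddings $I_n^{\Sym}\hookrightarrow I_{n+1}^{\Sym}$ by symmetrization are available, and the limit is merely a graded subspace. These embeddings still give existence of the limit by monotonicity and boundedness, as in Lemma~\ref{lemma_fin_sym_dim}. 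They do not give your stabilization at $n\ge d$ or any ideal structure.

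The missing idea is to replace multiplicativity by two closure properties of $\init(I_\infty)$, verified at finite level (Proposition~\ref{prop_initial_ideal_closed_cond}):
\begin{enumerate}
\item closure under the joint product $\lambda\wedge\nu$: multiply a lift by $m_\nu$ inside $I_n^{\Sym}$; for $\len(\nu)\le\len(\lambda)$ the shortest, lex-largest term is $\lambda\wedge\nu$;
\item closure under the disjoint product $\lambda\vee\nu$: multiply a lift by a monomial in fresh variables and resymmetrize.
\end{enumerate}

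Your finiteness step is also wrong, because the standard partitions are not of bounded length. For $I=(x_i-x_j)$, which satisfies $(\dagger)$, one has $I_\infty=\{\sum c_\lambda\lambda:\sum c_\lambda=0\}$. In the correct order the standard partitions are then exactly $(1^d)$, $d\ge 0$. For $I=(x_i^2-x_j^2)$, the partition $(1^d)$ never occurs in an element of $I_\infty$, so it is standard in every order. Hence ``length $<r$ plus cones over $\{a_1\ge\dots\ge a_k\}$'' cannot exhaust $\Lambda/\init(I_\infty)$.

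What $(\dagger)$ bounds is the \emph{rank} (Durfee square size), not the length. The paper shows the following:
\begin{enumerate}
\item For each $m$, the rank-$m$ partitions form a free cyclic module over $K[y_1,z_1,\dots,y_m,z_m]$. Here $y_i$ acts through $\wedge$ (adding a column of height $i$ to the first $m$ rows) and $z_j$ acts through $\vee$ (adding a row of length $j\le m$).
\item By joint--disjoint closedness, $\init(I_\infty)$ meets this module in a monomial submodule.
\item $(b^a)\in\init(I_\infty)$ forces every partition of rank $\ge\max\{a,b\}$ into $\init(I_\infty)$.
\end{enumerate}
This leaves finitely many cyclic pieces, each with rational Hilbert series by Hilbert--Serre. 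Your bounded-length monoid algebra only captures the $y$-direction and misses the $z$-direction entirely.
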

 
Condition~$(\dagger)$ also holds under a simple geometric condition on the zero
set of $I$.
 
\begin{corollary}\label{geometric_cor}
Let $I\subset K[X]$ be a homogeneous symmetric ideal, and let $\overline K$ be
an algebraic closure of $K$. If $\overline K$ is uncountable, then the limit
exists and is rational if there exist $p,q\in\NN$ such that
$V_{\overline K}(I) \subset \Delta_p \cup H_q$,
where
\[
\Delta_p = \{(a_1,a_2,\dots)\in \bA^\infty_{\overline{K}} \mid a_i^p = a_j^p
\text{ for all } i,j\}
\]
is the $p$-th root of the diagonal, and
\[
H_q = \bigcup_{\sigma\in \Sym(\infty)}
\sigma\big(\{(a_1,\dots,a_q,0,0,\dots)\mid a_1,\dots,a_q\in\overline K\}\big)
\]
is the union of all $q$-dimensional coordinate subspaces.  The ideals
$I(\Delta_p)$ and $I(H_q)$ are among the $\Sym(\infty)$-invariant prime ideals
characterized in \cite{KR25}.
\end{corollary}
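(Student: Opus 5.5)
We reduce the corollary to Theorem~\ref{thm_main}: it suffices to show that $V_{\overline K}(I)\subset \Delta_p\cup H_q$ forces condition~$(\dagger)$. The existence of the limit is already part of Theorem~\ref{thm_main}.

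\emph{Step 1 (an explicit element in the radical).} We write down an element of $I(\Delta_p)\cdot I(H_q)$ whose monomial structure satisfies $(\dagger)$. Set
\[
g=(x_1^p-x_2^p)\,x_3x_4\cdots x_{q+3}.
\]
The factor $x_1^p-x_2^p$ vanishes on $\Delta_p$. The product $x_3\cdots x_{q+3}$ involves $q+1$ distinct variables, so it vanishes on every $q$-dimensional coordinate subspace and hence on $H_q$. Therefore $g$ vanishes on $\Delta_p\cup H_q\supset V_{\overline K}(I)$. Its two monomials are
\[
m_1=x_1^p x_3\cdots x_{q+3},\qquad m_2=x_2^p x_3\cdots x_{q+3},
\]
and neither support contains the other. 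By symmetry, any power $g^k$ has the same property for its term $x_1^{pk}(x_3\cdots x_{q+3})^k$: every other monomial in the expansion of $g^k$ contains $x_2$, which is not in the support of the first term.

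\emph{Step 2 (a Nullstellensatz in infinitely many variables).} We need $g^k\in I_{\overline K}=I\otimes\overline K$ for some $k$. This is where uncountability of $\overline K$ is used. By the Cohen / Aschenbrenner--Hillar result, $I$ is generated up to symmetry by finitely many polynomials. The ring $\overline K[X]$ has countable dimension over the uncountable field $\overline K$, so the Nullstellensatz holds in $\overline K[X]$: the Amitsur--Lang argument shows that every maximal ideal has residue field $\overline K$, hence the ring is Jacobson with points in $\bA^\infty_{\overline K}$. It follows that $I(V_{\overline K}(I))=\sqrt{I_{\overline K}}$, so $g^k\in I_{\overline K}$ for some $k$.

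\emph{Step 3 (descent to $K$).} Write $g^k=\sum_j a_j h_j$ with $a_j\in\overline K[X]$ and $h_j\in I$. All data live in a finite extension $L/K$. Choose a $K$-linear functional $\phi\colon L\to K$ with $\phi(1)=1$ and apply it coefficientwise. Since $g^k$ has coefficients in $K$, this yields $g^k=\sum_j\phi(a_j)h_j\in I$. Then $f=g^k$ witnesses $(\dagger)$ with $m_1=x_1^{pk}(x_3\cdots x_{q+3})^k$, and Theorem~\ref{thm_main} gives rationality.

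The main obstacle is Step 2, the Nullstellensatz for the non-Noetherian ring $\overline K[X]$ together with the identification of the zero set with $\overline K$-points of $\bA^\infty$. I would settle this first with the countable-dimension Jacobson argument, which is exactly where the uncountability hypothesis enters. Step 1 needs a small check that $(\dagger)$ survives taking powers, and this follows from the presence of $x_2$ in every other monomial of $g^k$.
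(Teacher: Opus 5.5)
Your proof is correct and follows the paper's route. You exhibit a polynomial vanishing on $\Delta_p\cup H_q$ whose powers have the monomial structure required by $(\dagger)$, use Lang's Nullstellensatz in countably many variables over the uncountable field $\overline K$ to put a power of it in $I$, and apply Theorem~\ref{thm_main}.

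The differences are minor. The paper uses the slightly more economical witness $x_1\cdots x_q(x_{q+1}^p-x_{q+2}^p)$ rather than your $(x_1^p-x_2^p)x_3\cdots x_{q+3}$. The paper also concludes $f\in\sqrt{I}$ directly, whereas your Step~3 spells out the descent from $I\otimes_K\overline K$ to $I$, which the paper leaves implicit. Your appeal to Cohen/Aschenbrenner--Hillar is not needed.
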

 
Ideals not satisfying~$(\dagger)$ may still yield rational symmetric Hilbert
series; we illustrate the subtleties in Section~\ref{sec_hook_specht}.
 
\medskip
 
\noindent\textbf{Isotypic Hilbert series.}
The methods extend naturally to all irreducible representations in the stable
range.  Fix a partition $\mu\vdash n_0$ with parts
$\mu_1\ge\cdots\ge\mu_\ell>0$ and define its \emph{padding}
$\mu(n) = (n-n_0+\mu_1,\mu_2,\dots,\mu_\ell)\vdash n$ for $n\ge n_0$.  For $n$
large enough the multiplicity $c_{\mu(n),d}(n)$ of $V^{\mu(n)}$ in
$[K[X_n]/I_n]_d$ stabilizes, since in degree~$d$ only padded shapes $\mu(n)$
with $|\mu|\le d$ can appear.  The \emph{isotypic Hilbert series} records these
limits.  The stabilization of individual multiplicities was studied in the
semi-algebraic setting in \cite{BR20}, where polynomial bounds on the
multiplicities of Specht modules in the cohomology of symmetric semi-algebraic
sets are established.
 
\begin{theorem}\label{thm_isotypic_intro}
Let $I\subset K[X]$ be a homogeneous symmetric ideal satisfying condition
$(\dagger)$. Then for every partition $\mu\vdash n_0$ the isotypic Hilbert
series
\[
H^\mu(K[X]/I;\,t)
\;=\;
\sum_{d\ge 0}\Bigl(\lim_{n\to\infty} c_{\mu(n),d}(n)\Bigr)\,t^d
\]
is a rational function.
\end{theorem}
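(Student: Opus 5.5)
Our plan is to reduce Theorem~\ref{thm_isotypic_intro} to a family of statements of the same type as Theorem~\ref{thm_main}, using Young subgroups and Kostka inversion. Fix $\mu\vdash n_0$ and, for a composition $\nu=(\nu_1,\dots,\nu_\ell)$ of $n_0$, let $S_\nu(n)=\Sym(n-n_0)\times\Sym(\nu_1)\times\cdots\times\Sym(\nu_\ell)$ be the Young subgroup acting on blocks of variables: the first $n-n_0$ variables together with $\ell$ fixed blocks of sizes $\nu_i$. By Frobenius reciprocity,
\[
\dim_K \bigl([K[X_n]/I_n]_d\bigr)^{S_{\nu(n)}} \;=\; \sum_{\lambda\vdash n} K_{\lambda,\nu(n)}\, c_{\lambda,d}(n),
\]
where $\nu(n)$ is the padded composition. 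In degree $d$ only shapes $\lambda=\rho(n)$ with $|\rho|\le d$ occur, and for $n$ large the Kostka numbers $K_{\rho(n),\nu(n)}$ stabilise to numbers depending only on $\rho,\nu$, forming a unitriangular matrix in dominance order on padded partitions. Inverting this matrix expresses $\lim_n c_{\mu(n),d}(n)$ as a \emph{finite} $\ZZ$-linear combination, independent of $d$, of the limits $\lim_n \dim([K[X_n]/I_n]_d)^{S_{\nu(n)}}$ over partitions $\nu$ with $\nu\trianglerighteq\mu$ in the padded sense, that is, over $\nu\vdash m$ for $m\le n_0$. We still have to justify the finiteness: the inversion runs only over $\rho$ dominating $\mu$ after padding, and these have $|\rho|\le n_0$. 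Since rational functions are closed under finite $\ZZ$-combinations, it suffices to prove that each partially symmetric series $\lim_n H((K[X_n]/I_n)^{S_{\nu(n)}};t)$ exists and is rational.

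For this we rerun the proof of Theorem~\ref{thm_main} with $\Lambda$ replaced by the ring of $S_{\nu(n)}$-invariants. This ring is the tensor product of symmetric functions in the tail variables with the finitely generated polynomial ring $K[x_1,\dots,x_{n_0}]^{\Sym(\nu_1)\times\cdots}$, which does not depend on $n$. Its $K$-basis consists of orbit sums of monomials, indexed by pairs (a partition for the tail, and a tuple of partitions for the head blocks). We choose the same monomial order as in the symmetric case, refined to be compatible on the head. The degreewise stabilisation argument for Theorem~\ref{thm_main} then carries over verbatim, giving a stable initial module $\init(I_\infty)^{\nu}$ inside this partially symmetric limit ring.

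The main obstacle is the rank bound. In the symmetric case, condition~$(\dagger)$ is used to show that orbit sums of rank (number of parts) exceeding some $r_0$ all lie in $\init(I_\infty)$. Then the quotient splits into finitely many rank strata, each a finitely generated module over a polynomial ring and hence rational by Hilbert--Serre. Here we intend to apply the same argument to the tail variables only. The element $f$ from $(\dagger)$ can be multiplied by head monomials, and its symmetrisations over the tail still produce leading terms that force every orbit sum whose tail rank exceeds $r_0$ into the initial module. The point to check is that the head factor, which is bounded because it lives on $n_0$ fixed variables, does not interfere with the leading term computation. We would handle this by ordering first by the tail partition and only then by the head, so that the leading tail term is exactly the one from the symmetric proof. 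Once tail rank is bounded, each stratum is a finitely generated graded module over $K[\text{tail power sums of bounded rank}]\otimes(\text{head invariant ring})$, a Noetherian polynomial-type ring. Hilbert--Serre then gives rationality stratum by stratum, and summing over the finitely many strata and applying the Kostka inversion completes the proof.
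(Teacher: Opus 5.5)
Your reduction matches the paper's: Young subgroups with one growing block, Frobenius reciprocity, stable Kostka numbers that are unitriangular in dominance order on padded partitions (your indexing by the fixed tail is, if anything, cleaner), and rationality of each block-symmetric series by rerunning the proof of Theorem~\ref{thm_main} with a fixed finitely generated factor. The gap is in the step you single out as the main one. You take the rank of a partition to be its number of parts and claim that $(\dagger)$ forces every orbit sum of large length into the initial space. This is false already in the symmetric case. $I=(x_i^2\mid i\ge 1)$ satisfies $(\dagger)$, and $I_\infty$ is spanned by the partitions having a part $\ge 2$, so $(1^d)\notin\init(I_\infty)$ for every $d$. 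Likewise, for $I=(x_i-x_j\mid i\ne j)$ the MSF $(1^d)$ is the surviving standard one in each degree. So there is no length bound, and a stratification by length has infinitely many nonzero strata. What $(\dagger)$ actually gives (Lemma~\ref{lemma_dagger_block_eq}) is a block $(b^a)\in\init(I_\infty)$. This bounds the \emph{Durfee} rank $\rk(\lambda)=\max\{i\mid\lambda_i\ge i\}$: if $\lambda_M\ge M$ with $M=\max\{a,b\}$, then $\lambda=\bigl((b^a)\wedge(\lambda_1-b,\dots,\lambda_a-b)\bigr)\vee(\lambda_{a+1},\lambda_{a+2},\dots)$.

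Making this work needs two ingredients your proposal does not supply. First, you need closure of the initial space under both the disjoint product $\vee$ and the joint product $\wedge$. This is Proposition~\ref{prop_initial_ideal_closed_cond}, and in the block setting Step~1 of Theorem~\ref{thm_block_sym_rational}, where the shift $\iota_s$ keeps the new variables inside the growing block. The limit $\Lambda$ is only a graded vector space, because the symmetrization maps are not multiplicative. So ``module over tail power sums'' has no meaning until it is replaced by these artificial products and closure under them is proved. Second, you need the correct module structure on a stratum of fixed Durfee rank $m$. It is a module over $K[y_1,z_1,\dots,y_m,z_m]\otimes R_\mu$, where $y_i$ acts by joint product with $(1^i)$ (lengthening the first $i$ rows) and $z_j$ acts by disjoint product with $(j)$, $j\le m$ (adding short rows). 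Disjoint-type operations such as your power sums cannot reach the unbounded arms of the first $m$ rows, so they never make a stratum finitely generated. If the strata are by length, they do not even preserve them. Once you replace ``number of parts'' by Durfee rank and supply these two points, the rest of your plan goes through as in the paper.
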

 
Thus condition~$(\dagger)$ guarantees rationality not only of the symmetric
Hilbert series (the trivial representation, $\mu=(n_0)$) but of the stable
multiplicities along every family of irreducible representations.  The
equivariant Hilbert series of Nagel and Römer \cite{NR17} encodes the full
representation structure bivariately; our isotypic series extract individual
stable multiplicities as univariate rational functions, a complementary
perspective.  
\medskip

In this work we work mainly in the case where $\Sym(\infty)$
permutes the variables $x_1,x_2,\dots$. It is possible to consider the more general action when  $\Sym(\infty)$
acts on $k$-tuples, permuting the variables $x_{i,1},\dots,x_{i,k}$ of the infinite
polynomial ring $K[x_{i,j}\mid i\ge 1,\,1\le j\le k]$ by the first index. In this more general setting in the case $k\ge 2$ we have a considerably richer situation. While our methods focus on the $k=1$ case, both the symmetric and isotypic
Hilbert series can be defined in the general setup. We return to the $k$-tuple case in Section~\ref{sec_isotypic}, and highlight with three examples when to hope for rationality in this case. 
 
\noindent\textbf{Structure of the paper.}
In Section~\ref{sec_sym_hilb} we introduce the symmetric Hilbert series and establish its
stabilization. Section~\ref{sec_sym_lim} develops the structure theory of the symmetric
limit $I_\infty$ in the ring of symmetric functions and proves Theorem~\ref{thm_main}.
Section~\ref{sec_hook_specht} treats the hook Specht ideals, a natural family lying
outside condition~$(\dagger)$, and computes their symmetric Hilbert series.
Section~\ref{sec_isotypic} proves Theorem~\ref{thm_isotypic_intro} via block-symmetric
limits and Kostka inversion. In Subsection~\ref{subsec_final_rem} we describe the
generalization to the polynomial ring in $k$-sets of variables and formulate a conjecture relating rationality to the
dimension of the value variety. Finally, we close with open questions in Section~\ref{sec_open_quest}.

\medskip

\section{The Symmetric Hilbert Series}\label{sec_sym_hilb}
 
In this section we fix notation, define the symmetric Hilbert series, and
reformulate the problem in the language of symmetric functions.
Subsections~\ref{subsec_partitions} and~\ref{subsec_order} develop the
combinatorial theory of partitions that will be essential in Section~\ref{sec_sym_lim}.
 
\subsection{The fundamental objects}\label{subsec_fundamental}
 
We work over a field $K$ of characteristic~$0$.  Let $\Sym(n)$ denote the
symmetric group on $n$ elements, and let
\[
\Sym(\infty) := \bigcup_{n\ge 1} \Sym(n)
\]
be the union of all finite symmetric groups, viewed as permutations of $\NN$
that fix all but finitely many elements.  The group $\Sym(n)$ acts on
$K[x_1,\dots,x_n]$ by permuting variables: $\sigma\cdot x_i = x_{\sigma(i)}$.
Passing to the direct limit
\[
K[X] := K[x_i \mid i\ge 1] = \varinjlim K[x_1,\dots,x_n],
\]
we obtain an action of $\Sym(\infty)$ on the infinite polynomial ring.  We
write $K[X_n] = K[x_1,\dots,x_n]$ for the truncation at~$n$.
 
An ideal $I\subset K[X_n]$ is \emph{symmetric} if it is stable under $\Sym(n)$,
and an ideal $I\subset K[X]$ is \emph{symmetric} if it is stable under
$\Sym(\infty)$.  Since every $f\in K[X]$ involves only finitely many variables,
no nonzero polynomial is fixed by all of $\Sym(\infty)$, so there is no
meaningful ring of $\Sym(\infty)$-invariants in $K[X]$.  This is why the limit
construction of Subsection~\ref{subsec_sym_lim} is necessary.
 
For a $K[X_n]$-module $M$ equipped with a $\Sym(n)$-action, we write $M^{\Sym}$
for the submodule of invariants, which is naturally a module over the ring of
symmetric polynomials $K[X_n]^{\Sym}$.  For a symmetric ideal $I\subset
K[X_n]$, the \emph{invariant ideal} $I^{\Sym}$ is an ideal in $K[X_n]^{\Sym}$.
 
The polynomial rings $K[X_n]$ and $K[X]$ are graded by degree, as is
$K[X_n]^{\Sym}$.  Since we work with Hilbert series throughout, we assume all
ideals to be homogeneous; this makes every quotient a graded ring as well.
 
Given a homogeneous ideal $I\subset K[X]$, its \emph{truncation at~$n$} is
$I_n := I\cap K[X_n]$, a homogeneous $K[X_n]$-ideal that is symmetric whenever
$I$ is.

\subsection{The symmetric dimension}
To give a good definition of the symmetric Hilbert series, we first need an intermediate definition.

\begin{definition}
    Let $I\subset K[X]$ be a homogeneous symmetric ideal. Then the \emph{symmetric dimension} of the quotient at degree $d$ is the limit
    \[
    \textnormal{SD}(K[X]/I,d) = \lim_{n\rar\infty}\dim_K \big[ K[X_n]^\Sym / I_n^\Sym \big]_d.
    \]
\end{definition}

\begin{lemma}\label{lemma_fin_sym_dim}
    Let $I\subset K[X]$ be a homogeneous symmetric ideal. Then the symmetric dimension $\SD(K[X]/I,d)$ exists and is finite for all $d\in \NN$.
\end{lemma}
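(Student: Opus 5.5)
The plan is to compare the spaces $\big[K[X_n]^\Sym/I_n^\Sym\big]_d$ for consecutive $n$ and show that the sequence of dimensions $a_n := \dim_K\big[K[X_n]^\Sym/I_n^\Sym\big]_d$ is eventually constant. Fix $d$. For $n\ge d$ the monomial symmetric polynomials $m_\lambda(x_1,\dots,x_n)$ with $\lambda\vdash d$ form a basis of $[K[X_n]^\Sym]_d$. In particular this space has dimension $p(d)$, the number of partitions of $d$, independently of $n$. Hence $0\le a_n\le p(d)$ for all $n\ge d$, so boundedness is immediate, and it remains to prove monotonicity.

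The key step is a compatibility map between levels. Let $\pi_n\colon K[X_{n+1}]\to K[X_n]$ be the ring homomorphism setting $x_{n+1}=0$. It sends symmetric polynomials to symmetric polynomials, and for $n\ge d$ it maps $m_\lambda(x_1,\dots,x_{n+1})$ to $m_\lambda(x_1,\dots,x_n)$ for $\lambda\vdash d$. So in degree $d$ it is an isomorphism $\rho_n\colon[K[X_{n+1}]^\Sym]_d\to[K[X_n]^\Sym]_d$. I will then check that $\rho_n$ maps $[I_{n+1}^\Sym]_d$ into $[I_n^\Sym]_d$. If $f\in I_{n+1}\subset I$, write $f=g+x_{n+1}h$ with $g=\pi_n(f)\in K[X_n]$. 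The difficulty is that $g\in I$ is not obvious, since $I$ is symmetric but not a monomial ideal. Here I will use the symmetric structure: $g$ is the coefficient of $x_{n+1}^0$ in $f$. Choose distinct indices $N_1,\dots,N_{d+1}>n+1$ and let $\sigma_j$ swap $x_{n+1}$ and $x_{N_j}$. Then $\sigma_j f = g + x_{N_j}h_j$, where the polynomial in $x_{N_j}$ has degree at most $d$ and the same coefficients $g,h^{(1)},\dots$ after relabeling. I will instead use a Vandermonde-type argument. Consider $f$ as a polynomial in the single variable $x_{n+1}$ with coefficients in $K[X_n]$, say $f=\sum_{k=0}^d g_k x_{n+1}^k$. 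I will use symmetric operations combined with the ideal property to isolate $g_0$. The cleanest route: the ideal $I$ is closed under the substitution $x_{n+1}\mapsto c\,x_{n+1}$? That fails in general, so I instead expect to use the easier inequality in the other direction.

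Inclusion $I_n\subset I_{n+1}$ together with symmetrization gives the right direction. For $g\in[I_n^\Sym]_d$, the element $\rho_n^{-1}(g)$ is the unique symmetric polynomial in $n+1$ variables restricting to $g$. Expressed through the power sums $p_1,\dots,p_d$, which are algebraically independent in degrees $\le d$ once $n\ge d$, we have $g=P(p_1,\dots,p_d)$ with $p_k$ taken in $n$ variables. I want to show that $P(p_1^{(n+1)},\dots)\in I_{n+1}$. To do this, I write $P(p^{(n+1)})$ as a combination of $\Sym(n+1)$-translates of $g$ and products of $g$-type elements with polynomials. This is a Reynolds-operator computation: $\sum_{\sigma\in\Sym(n+1)}\sigma g$ lies in $I_{n+1}^\Sym$, and its image under $\rho_n$ is $c\,g$ plus symmetrizations of $g$ with a variable set to zero. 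Inducting on degree and support, this yields $\dim[I_{n+1}^\Sym]_d\ge\dim[I_n^\Sym]_d$. Equivalently, the sequence $a_n$ is non-increasing for $n\ge d$.

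A non-increasing sequence of non-negative integers is eventually constant. Therefore the limit exists, is finite, and is at most $p(d)$. I expect the main obstacle to be the comparison of invariant ideals across $n$. Concretely, this means verifying that $\rho_n^{-1}$, transported through the stable basis $m_\lambda$, carries $I_n^\Sym$ into $I_{n+1}^\Sym$ in degree $d$, or at least that the dimensions grow monotonically. My first attempt will be the Reynolds averaging argument above, with a careful expansion of the restriction of the averaged polynomial in the monomial basis, using $n\ge d$ so that no monomial of degree $d$ uses all variables.
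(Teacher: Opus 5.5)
\paragraph{Verdict.} There is a genuine gap. Your outer frame matches the paper's: for $n\ge d$ the dimension of $[K[X_n]^\Sym]_d$ is constant, equal to $p(d)$, so it suffices that $\dim_K[I_n^\Sym]_d$ be non-decreasing. That monotonicity step is the whole content of the lemma, and your proposal does not establish it.

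\paragraph{The stated goal is false.} You aim to show $P(p_1^{(n+1)},\dots,p_d^{(n+1)})\in I_{n+1}$, that is, that $\rho_n^{-1}$ carries $[I_n^\Sym]_d$ into $[I_{n+1}^\Sym]_d$. This fails in general. Take $I=(x_i-x_j\mid i\neq j)$, so a polynomial lies in $I_n$ if and only if it vanishes on the diagonal $(x,\dots,x)$. Then
$g=\sum_{i\le n}x_i^2-\frac1n\bigl(\sum_{i\le n}x_i\bigr)^2$ lies in $I_n^\Sym$. Its unique symmetric lift is $\sum_{i\le n+1}x_i^2-\frac1n\bigl(\sum_{i\le n+1}x_i\bigr)^2$, which takes the value $-\frac{n+1}{n}x^2\neq 0$ on the diagonal. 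The same computation with $\frac{1}{n+1}$ in place of $\frac1n$ shows that $\rho_n$ does not map $I_{n+1}^\Sym$ into $I_n^\Sym$ either. So restriction $x_{n+1}\mapsto 0$ is the wrong comparison map in both directions. Your fallback, that the Reynolds average restricts to ``$c\,g$ plus symmetrizations'' and one then ``inducts on degree and support'', is not an argument that could replace this.

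\paragraph{The missing idea.} What is needed is much simpler and makes no reference to $\rho_n$. You already observe that for $g\in[I_n^\Sym]_d$ the average $\phi_{n+1}(g)$ lies in $[I_{n+1}^\Sym]_d$, since $I_n\subset I_{n+1}$ and $I_{n+1}$ is $\Sym(n+1)$-stable. The only additional fact required is that $\phi_{n+1}$ is \emph{injective} on $K[X_n]^\Sym$.

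This holds because $\Sym(n+1)$ acts transitively on monomials of a fixed shape. Hence $\phi_{n+1}$ sends $\sum_{\shape(m)=\lambda,\,m\in K[X_n]}m$ to a nonzero multiple of the monomial symmetric polynomial of shape $\lambda$ in $n+1$ variables. A basis is therefore sent to a linearly independent set, so $\phi_{n+1}\colon[I_n^\Sym]_d\to[I_{n+1}^\Sym]_d$ is injective and the dimensions are non-decreasing. This is exactly the paper's proof; in its normalization it reads $\phi_{n+1}(m_\lambda(x_1,\dots,x_n))=m_\lambda(x_1,\dots,x_{n+1})$.

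\paragraph{Why your lift fails but the average works.} The composite $\rho_n\circ\phi_{n+1}$ multiplies $m_\lambda$ by $\frac{n+1-\len(\lambda)}{n+1}$. This factor is nonzero but depends on $\lambda$, so in general $\phi_{n+1}(g)\neq\rho_n^{-1}(g)$. Only the dimension count is needed, not any particular lift of $g$.
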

\begin{proof}
    By the fundamental theorem of symmetric polynomials, the ring of symmetric polynomials $K[X_n]^\Sym$ is generated by the first elementary symmetric polynomials $e_1,\dots,e_n$ with $\deg e_k = k$.
    Thus the sequence
    \[
    \dim_K \big[K[X_n]^\Sym\big]_d
    \]
    is constant for $n\geq d$.
    We can bound
    \[
    \dim_K \big[ I_n^\Sym \big]_d \leq \dim_K\big[ K[X_n]^\Sym\big]_d
    \]
    for any $n$,
    and we have inclusions $I_n^\Sym  \irar I_{n+1}^\Sym$ given by $f\mapsto \frac{1}{(n+1)!}\sum_{\sigma\in \Sym(n+1)} \sigma \cdot f$. This shows that $\dim_K \big[I_n^\Sym\big]_d$ is eventually constant just as the dimension of $\big[K[X_n]^\Sym\big]_d$.
    Thus the limit of the difference of the two dimensions, $\SD(K[X]/I,d)$, exists and is finite.
\end{proof}

\begin{definition}\label{def_sym_hilb_series}
Let $I\subset K[X]$ be a homogeneous symmetric ideal.  
The \emph{symmetric Hilbert series} of $K[X]/I$ is the formal power series
\[
H^{\Sym}(K[X]/I;\,t)
    := \sum_{d\geq 0} \SD(K[X]/I,d) \cdot t^d.
\]
\end{definition}

With this definition, the limit of Hilbert series of truncations in Theorem \ref{thm_main} is exactly the symmetric Hilbert series,
\[
\lim_{n\rar\infty} H(K[X_n]^\Sym/I_n^\Sym;t) = H^\Sym(K[X]/I; t).
\]
Now Lemma \ref{lemma_fin_sym_dim} already solves the existence part of the theorem.

\subsection{The symmetric limit}\label{subsec_sym_lim}
The key to the existence of the symmetric dimension was the embedding of invariant ideals. We can take this idea to its conclusion and construct a graded object whose degree-$d$ part has dimension exactly equal to the symmetric dimension. This will be the \emph{symmetric limit} of $I$. We construct it as a direct limit.

\begin{definition}\label{def_part_sym}
    For each $n$, we have a \emph{partial symmetrization} map $\phi_n$ on $K[X]$ given by
    \[
    \phi_n(f) = \frac{1}{n!}\sum_{\sigma\in \Sym(n)} \sigma\cdot f.
    \]
\end{definition}

\begin{lemma}\label{lemma_part_sym}
    For every $n,m$ we have
    \begin{enumerate}
        \item $\phi_n$ is linear and respects grading.
        \item $\phi_n \circ \phi_m = \phi_m \circ \phi_n = \phi_n$ if $n \geq m$.
        \item $\phi_n(sf) = s\phi_n(f)$ if $s$ is invariant under $\phi_n$.
    \end{enumerate}
\end{lemma}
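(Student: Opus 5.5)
The plan is to verify the three items directly from Definition~\ref{def_part_sym}. The only ingredients are that the action of $\Sym(\infty)$ on $K[X]$ is by graded $K$-algebra automorphisms, and that $\Sym(m)\subset\Sym(n)$ is a subgroup when $m\le n$, since permutations of $\{1,\dots,m\}$ extend by the identity. Throughout we use that $K$ has characteristic~$0$, so $\frac{1}{n!}$ makes sense.

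\textbf{Item (1).} Each $\sigma$ acts linearly and sends monomials of degree $d$ to monomials of degree $d$. Hence $\phi_n$, being a $K$-linear combination of such maps, is linear and maps $K[X]_d$ into $K[X]_d$.

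\textbf{Item (2).} First we show that $\phi_n(f)$ is $\Sym(n)$-invariant. For $\tau\in\Sym(n)$,
\[
\tau\cdot\phi_n(f)=\frac{1}{n!}\sum_{\sigma\in\Sym(n)}(\tau\sigma)\cdot f=\phi_n(f),
\]
by reindexing $\sigma\mapsto\tau\sigma$. The same reindexing gives $\phi_n(\tau\cdot f)=\phi_n(f)$. Now let $n\ge m$.
\begin{itemize}
\item For $\phi_m\circ\phi_n=\phi_n$: since $\Sym(m)\subset\Sym(n)$, every $\sigma\in\Sym(m)$ fixes $\phi_n(f)$. So $\phi_m(\phi_n(f))$ is an average of $m!$ copies of $\phi_n(f)$, which is $\phi_n(f)$.
\item For $\phi_n\circ\phi_m=\phi_n$: by linearity,
\[
\phi_n(\phi_m(f))=\frac{1}{m!}\sum_{\sigma\in\Sym(m)}\phi_n(\sigma\cdot f),
\]
and each summand equals $\phi_n(f)$ by the right-invariance just shown.
\end{itemize}

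\textbf{Item (3).} Here "invariant under $\phi_n$" means $\phi_n(s)=s$. The first step is to show that this is equivalent to $s$ being $\Sym(n)$-invariant. One direction holds because $\phi_n(s)$ is $\Sym(n)$-invariant. For the other, if $s$ is $\Sym(n)$-invariant, then averaging gives $\phi_n(s)=s$. With this in hand, since each $\sigma$ is a ring automorphism and fixes $s$, we get $\sigma\cdot(sf)=s\,(\sigma\cdot f)$. Summing over $\Sym(n)$ and dividing by $n!$ yields $\phi_n(sf)=s\,\phi_n(f)$.

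I expect no real obstacle. The only point needing care is item (3): one must interpret "invariant under $\phi_n$" as the fixed-point condition and pass from it to $\Sym(n)$-invariance before factoring $s$ out of the sum.
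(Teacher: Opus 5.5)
Your proof is correct and follows the paper's approach: items (1) and (3) are the same direct computation, and for (2) you use $\Sym(m)\subset\Sym(n)$ and the $\Sym(n)$-invariance of the image of $\phi_n$, as the paper does. The one difference is that you get $\phi_n\circ\phi_m=\phi_n$ from the reindexing identity $\phi_n(\tau\cdot f)=\phi_n(f)$ for $\tau\in\Sym(m)$. The paper instead obtains commutativity by ``reordering the sums'', so your version is a slightly more explicit form of the same step.
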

\begin{proof}
    $(1)$ is clear, since the action of $\Sym(n)$ on $K[X]$ is linear and does not change degree.
    For $(2)$, we can see that $\phi_n\circ\phi_m = \phi_m\circ\phi_n$ by reordering the sums. The image of $\phi_n$ is $\Sym(n)$-invariant and $\Sym(m)\subset\Sym(n)$, thus $\phi_m\circ\phi_n = \phi_n$.
    For $(3)$ we compute
    \[
    \phi_n(sf) = \frac{1}{n!}\sum_{\sigma\in\Sym(n)} \sigma \cdot (sf) = s\Big(\frac{1}{n!}\sum_{\sigma\in\Sym(n)} \sigma\cdot f\Big) = s\phi_n(f). 
    \]
\end{proof}

Points $(1)$ and $(2)$ show that partial symmetrization, restricted to $K[X_n]^\Sym$ and to $I_n^\Sym$, respectively, define a directed system.
This allows us to define their direct limit with respect to partial symmetrization. Note that since we only have maps of graded vector spaces, the resulting limit objects are only graded vector spaces. We will address this problem in the next section.

\begin{definition}\label{def_sym_lim}
    We call the direct limit of the ring of symmetric polynomials \emph{the ring of symmetric functions},
    \[
    \Lambda := \lim_{\longrightarrow} K[X_n]^\Sym.
    \]
    Given a homogeneous symmetric ideal $I\subset K[X]$, we call the direct limit of the invariant part of its truncations \emph{the symmetric limit},
    \[
    I_\infty := \lim_{\longrightarrow} I_n^\Sym.
    \]
    We call the corresponding canonical homomorphism \emph{the total symmetrization},
    \[
    \phi_\infty: K[X] \rar \Lambda.
    \]
\end{definition}

From the definition, it is clear that $\dim_K [\Lambda/I_{\infty}]_d = \SD(K[X]/I,d)$, thus the symmetric Hilbert series of $K[X]/I$ is exactly $H(\Lambda/I_\infty;t)$.

\subsection{Partitions and monomial symmetric functions}\label{subsec_partitions}

We recall the notation for integer partitions that will be used throughout.

\begin{definition}\label{def_rank}
A partition $\lambda\vdash d$ is a weakly decreasing sequence
$\lambda=(\lambda_1,\dots,\lambda_\ell)$ of nonnegative integers with sum $d$,
identified up to trailing zeros.  
We write $\len(\lambda)$ for the number of nonzero parts.  
The dual partition $\lambda^\perp$ is defined by
\[
(\lambda^\perp)_i := \#\{j \mid \lambda_j\ge i\}.
\]
The \emph{rank} of $\lambda$ is
$\rk(\lambda)=\max\{i \mid \lambda_i\ge i\}$; by convention, the zero partition
has rank~$0$.
\end{definition}

For a monomial $m=x_1^{\alpha_1}x_2^{\alpha_2}\dots x_n^{\alpha_n}$, its
\emph{shape} $\shape(m)$ is the partition obtained by sorting
$\alpha = (\alpha_1,\alpha_2,\dots,\alpha_n)$ in descending order.

\begin{definition}\label{def_mon_sym_poly}
Let $\lambda\vdash d$ with $\len(\lambda) \leq n$.  
The (normalized) monomial symmetric polynomial $m_\lambda\in K[X_n]^{\Sym}$ is the normalized sum of all
monomials of shape $\lambda$:
\[
m_\lambda := \frac{1}{N}\sum_{\shape(m)=\lambda} m,
\]
where $N$ is the number of monomials of shape $\lambda$ in $K[X_n]$. 
\end{definition}

Such a monomial symmetric polynomial evaluates to $1$ when all coordinates are $1$,
\[
m_\lambda(\underbrace{1,\dots,1}_{n\textnormal{ times}}) = 1
\]
and the image of $m_{\lambda}(x_1,\dots,x_n)$ in $K[X_{n+k}]^\Sym$ under $\phi_{n+k}$ is exactly $m_\lambda(x_1,\dots,x_{n+k})$.

\begin{example}
    For partitions $\lambda = (2,1)$, $\mu = (4)$, we have monomial symmetric polynomials
    \[
    m_\lambda(x_1,x_2) = \frac{1}{2}(x_1^2x_2 + x_1x_2^2), \quad m_\lambda(x_1,x_2,x_3) = \frac{1}{6}(x_1^2x_2 + x_1x_2^2 + x_1^2x_3 + x_1x_3^2 + x_2^2x_3 + x_2x_3^2)
    \]
    and
    \[
    m_\mu(x_1,x_2) = \frac{1}{2}(x_1^4 + x_2^4), \quad m_\mu(x_1,x_2,x_3) = \frac{1}{3}(x_1^4 + x_2^4 + x_3^4).
    \]
\end{example}

For $n\ge d$, the polynomials $\{m_\lambda \mid \lambda\vdash d, \len(\lambda)\le
n\}$ form a basis of $[K[X_n]^{\Sym}]_d$.  
Passing to the direct limit yields the \emph{monomial symmetric functions} (MSFs).  
Identifying each MSF with its corresponding partition, we obtain a vector space basis for
\[
\Lambda = \langle \lambda\mid \lambda\text{ a partition}\rangle_K,
\]
the graded vector space of symmetric functions.

In general, $\phi_N(m_\lambda m_\mu) \neq \phi_N(m_\lambda)\phi_N(m_\mu)$ for monomial symmetric polynomials in $K[X_n]^\Sym$ and $N> n$. Writing the product as $n$ increases, we see that the coefficients of monomial symmetric polynomials corresponding to partitions with greatest length will eventually dominate. However, we can introduce two artificial products on $\Lambda$ with desired properties. We explore this further in the next section.

\begin{definition}\label{def_joint_disjoint_prod}
For partitions $\lambda$ and $\mu$:
\begin{enumerate}
\item The \emph{disjoint product} $\lambda\vee\mu$ is the decreasing
rearrangement of all parts of $\lambda$ and $\mu$.
\item The \emph{joint product} is the entry-wise sum
\[
\lambda\wedge\mu = (\lambda_1+\mu_1,\;\lambda_2+\mu_2,\dots),
\]
and satisfies $\lambda\wedge\mu = (\lambda^\perp\vee \mu^\perp)^\perp$.
\end{enumerate}
\end{definition}

Joint and disjoint product are analogously defined on MSFs and extend linearly to two product structures on $\Lambda$, which respect the grading.

\begin{example}
\begin{enumerate}
\item $(3,1)^\perp=(2,1,1)$;
\item $(3,2)\vee(4,1,1)=(4,3,2,1,1)$;
\item $(4,1)\wedge(3,3,2)=(7,4,2)$.
\end{enumerate}
\end{example}

\begin{definition}\label{def_dominance}
    For two partitions $\mu,\nu\vdash n$, we say $\mu$ \emph{dominates} $\nu$ and write $\mu \trianglerighteq \nu$ if
    \[
    \mu_1+\dots+\mu_k \geq \nu_1+\dots+\nu_k\ \textnormal{for all}\ k.
    \]
    This defines a partial order on partitions.     
\end{definition}
The dominance order will be relevant for Section \ref{sec_isotypic}.

\subsection{A combinatorial order on partitions}\label{subsec_order}

To describe structural properties of $I_\infty$, we require a combinatorial
encoding of partitions.

\begin{definition}\label{def_string}
Write a partition as $\lambda=(\lambda_1^{e_1},\dots,\lambda_k^{e_k})$ with
distinct nonzero parts $\lambda_i$ and multiplicities $e_i>0$.  
The \emph{string} of $\lambda$ is the word
\[
\str(\lambda)
    := 1^{e_1}0^{\lambda_1-\lambda_2}
       1^{e_2}0^{\lambda_2-\lambda_3}
       \cdots
       1^{e_k}0^{\lambda_k}
    \in \{0,1\}^*.
\]
\end{definition}

\begin{example}
The partitions $\lambda=(3,3,1)$ and $\mu=(5,2,1,1)$ have
\[
\str(\lambda)=110010,
\qquad
\str(\mu)=100010110.
\]
The string traces the boundary of the Young diagram: $1$ corresponds to a down
step and $0$ to a left step.
\end{example}

Let $\mathcal P = \{0,1\}^*/\!\sim$, where $s\sim 0s$ and $s\sim s1$. It is the set of all finite sequences made from letters $1$ and $0$ identified up to leading $0$'s and trailing $1$'s.

\begin{lemma}
The map $\str$ induces a bijection between partitions and $\mathcal P$.
\end{lemma}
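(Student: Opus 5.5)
The plan is to build an explicit inverse map and check that both composites are the identity, treating the string as a lattice path.

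First, $\str$ descends to $\mathcal P$ by the convention on trailing zeros. A partition is identified with itself padded by zeros. In the definition of $\str(\lambda)$ only the distinct \emph{nonzero} parts $\lambda_1>\dots>\lambda_k>0$ appear, and the last block is $1^{e_k}0^{\lambda_k}$. So the word is determined by the partition, and we take its class in $\mathcal P$.

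Next I would give a normal form for classes in $\mathcal P$. Every class has a unique representative that neither starts with $0$ nor ends with $1$: delete all leading $0$'s and all trailing $1$'s. Both relations $s\sim 0s$ and $s\sim s1$ only add or remove letters at these ends, so this reduction is well defined and uniquely determines the class. The empty word represents the zero partition, and $\str$ of the zero partition is empty. For $\lambda\neq 0$ the word $\str(\lambda)$ is already in normal form: it begins with $1^{e_1}$ where $e_1>0$, and it ends with $0^{\lambda_k}$ where $\lambda_k>0$.

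It therefore suffices to show that $\str$ is a bijection from nonzero partitions onto the nonempty words that begin with $1$ and end with $0$. Such a word decomposes uniquely as
\[
w = 1^{e_1}0^{a_1}1^{e_2}0^{a_2}\cdots 1^{e_k}0^{a_k}, \qquad e_i,a_i>0,
\]
namely into maximal runs. From $w$ we set $\lambda_k=a_k$, $\lambda_{k-1}=a_{k-1}+a_k$, and in general $\lambda_i=a_i+\dots+a_k$. This gives strictly decreasing positive values, and the multiplicities $e_i$ are read off directly. Define $\lambda=(\lambda_1^{e_1},\dots,\lambda_k^{e_k})$.

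By construction this is inverse to $\str$. In one direction, the gaps $\lambda_i-\lambda_{i+1}$ are positive, so the runs of $\str(\lambda)$ are exactly the displayed blocks, and the decomposition recovers $e_i$ and $a_i=\lambda_i-\lambda_{i+1}$. Summing the $a_i$ from the right then recovers $\lambda$. In the other direction, applying $\str$ to the constructed $\lambda$ reproduces $w$.

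There is no real obstacle here. The only point needing care is that consecutive gaps are strictly positive, so that adjacent blocks of $1$'s never merge. This holds because the $\lambda_i$ are distinct, and it is what makes the run decomposition, and hence injectivity, work.
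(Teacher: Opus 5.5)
Your proof is correct and follows essentially the same route as the paper. Both pass to the unique representative of each class that begins with $1$ and ends with $0$, split it into maximal runs $1^{e_1}0^{a_1}\cdots 1^{e_k}0^{a_k}$, and recover the parts via $\lambda_i=a_i+\dots+a_k$. You are slightly more careful than the paper in treating the class of the empty word (the zero partition) separately. That class contains only words of the form $0^a1^b$, so it is overlooked by the paper's claim that every class has a representative beginning with $1$ and ending with $0$.
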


\begin{proof}
Every class in $\mathcal P$ has a unique representative beginning with $1$ and
ending with $0$.  
Such strings correspond bijectively to sequences
$1^{e_1}0^{d_1}\dots1^{e_k}0^{d_k}$, and its unique preimage is given by
\[
(\lambda_1^{e_1},\dots,\lambda_k^{e_k}),
\qquad
\lambda_k=d_k,\;\lambda_i=d_i+\lambda_{i+1}.
\]
\end{proof}

\begin{definition}\label{def_partition_divisibility}
For partitions $\lambda$,$\mu$, the partition $\lambda$ \emph{divides} $\mu$, written $\lambda\mid\mu$, if $\str(\lambda)$ is a
subsequence of $\str(\mu)$.
\end{definition}

This defines a partial order on partitions.  By Higman’s lemma \cite{Hig52}
it is a well partial order: in every infinite sequence of partitions, some
earlier partition divides a later one. This fact will be crucial for the structural
analysis in Section~\ref{sec_sym_lim}.

\section{Structure of the Symmetric Limit}\label{sec_sym_lim}

The symmetric limit
\[
I_\infty = \varinjlim I_n^{\Sym} \subset \Lambda
\]
associated to a homogeneous symmetric ideal $I\subset K[X]$ inherits many ``local'' product structures from the finite truncations $I_n^\Sym$ in $K[X_n]^\Sym$. In this section we study its global structure with respect to the joint and disjoint product, which will lead to the proof of the main theorem. These two product structures on $\Lambda$ manage to contain all necessary information when we restrict to initial MSFs after choosing a suitable partition order.

\subsection{The joint-disjoint closure} We first characterize subspaces of the ring of symmetric functions which are closed under both the joint and disjoint product.

\begin{definition}\label{def_joint_disjoint_closure}
Let $S\subset \Lambda$ be any subset.  
We define the \emph{disjoint closure} $\Lambda \vee S$ as the $(\Lambda,\vee)$-ideal
generated by $S$. Furthermore, we define the \emph{joint closure} $\Lambda\wedge S$ as the $(\Lambda,\wedge)$-ideal
generated by $S$. We say that a subset $S$ is \emph{joint–disjoint closed} if it is both a
$(\Lambda,\vee)$-ideal and a $(\Lambda,\wedge)$-ideal.
\end{definition}

\begin{lemma}\label{lemma_joint_disjoint_closed}
Let $\mathcal{B}\subset \Lambda$ be a set of MSFs. Then
\[
\Lambda\vee(\Lambda\wedge \mathcal{B})
    = \Lambda\wedge(\Lambda\vee\mathcal{B}).
\]
In particular, this set is joint–disjoint closed.
\end{lemma}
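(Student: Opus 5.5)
The plan is to reduce the identity to a purely combinatorial statement about partitions, prove one inclusion by an explicit construction, and obtain the other inclusion from conjugation symmetry.

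\textbf{Reduction.} Both products are bilinear, map pairs of MSFs to single MSFs, and have the zero partition as a unit. Hence for a set $\mathcal{B}$ of MSFs, $\Lambda\wedge\mathcal{B}$ is spanned by the MSFs $\nu\wedge\beta$, and $\Lambda\vee\mathcal{B}$ by the MSFs $\nu\vee\beta$, with $\nu$ ranging over all partitions and $\beta$ over $\mathcal{B}$. Consequently $\Lambda\vee(\Lambda\wedge\mathcal{B})$ is spanned by the partitions $\rho\vee(\nu\wedge\beta)$, and $\Lambda\wedge(\Lambda\vee\mathcal{B})$ by the partitions $\rho\wedge(\nu\vee\beta)$. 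It therefore suffices to show that these two sets of partitions coincide. A useful reformulation is that $\gamma=\nu\wedge\beta$ for some $\nu$ if and only if the entrywise difference $\gamma-\beta$ (both padded with zeros) is again a partition.

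\textbf{The inclusion $\subseteq$.} Let $\mu=\rho\vee(\nu\wedge\beta)$ with $\len(\beta)=k$. The $k$ rows $\beta_j+\nu_j$ for $j\le k$ sit in $\mu$ at positions $p_1<\dots<p_k$. Define $\alpha$ by $\alpha_i=\beta_{j(i)}$, where $j(i)=\max(1,\#\{j\mid p_j\le i\})$ for $i\le p_{k+1}-1$ (with $p_{k+1}:=p_k+1$), and $\alpha_i=0$ afterwards. Then $\alpha$ is weakly decreasing and contains every part of $\beta$, so $\alpha=\nu'\vee\beta$ for a suitable $\nu'$ made of the repeated parts.

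It remains to check that $\mu-\alpha$ is a partition, which gives $\mu=\rho'\wedge(\nu'\vee\beta)$. Nonnegativity holds because for $p_j\le i$ we have $\mu_i\ge\mu_{p_{j+1}-1}\ge\mu_{p_{j+1}}$, and in general $\mu_i$ is at least the value $\beta_{j(i)}+\nu_{j(i)}$ appearing at the relevant position. Monotonicity only needs checking where $\alpha$ drops, namely at $i+1=p_{j+1}$. There
\[
\mu_i-\mu_{p_{j+1}}\ \ge\ \mu_{p_j}-\mu_{p_{j+1}}=(\beta_j+\nu_j)-(\beta_{j+1}+\nu_{j+1})\ \ge\ \beta_j-\beta_{j+1},
\]
using $\nu_j\ge\nu_{j+1}$. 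The final drop from $\beta_k$ to $0$ after position $p_k$ is handled the same way, using $\nu_k\ge 0$.

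\textbf{The inclusion $\supseteq$ via conjugation.} Conjugation $\lambda\mapsto\lambda^\perp$ is an involution on partitions and satisfies $(\lambda\vee\mu)^\perp=\lambda^\perp\wedge\mu^\perp$, equivalently the identity $\lambda\wedge\mu=(\lambda^\perp\vee\mu^\perp)^\perp$ recorded after Definition~\ref{def_joint_disjoint_prod}. It follows that
\[
\bigl(\rho\wedge(\nu\vee\beta)\bigr)^\perp=\rho^\perp\vee(\nu^\perp\wedge\beta^\perp).
\]
Applying the already proved inclusion to the set $\mathcal{B}^\perp$ and conjugating back gives the reverse inclusion for $\mathcal{B}$.

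\textbf{Closedness.} The common set $\Lambda\vee(\Lambda\wedge\mathcal{B})=\Lambda\wedge(\Lambda\vee\mathcal{B})$ is a $\vee$-ideal by its first description and a $\wedge$-ideal by its second, so it is joint–disjoint closed.

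The main obstacle is the explicit choice of the padding $\alpha$ in the $\subseteq$ step, in particular the inequality at the drop positions; the other steps are formal.
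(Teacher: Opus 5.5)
You have a genuine gap in the $\subseteq$ step. Your reduction to a statement about partitions is fine, and so is using conjugation to get $\supseteq$ from $\subseteq$. The explicit padding $\alpha$, however, does not work.

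Your $\alpha$ fills the stretch between $p_j$ and $p_{j+1}$ with copies of $\beta_j$, and the inequalities you use to check it run the wrong way. For $p_j\le i<p_{j+1}$ one has $\mu_i\le\mu_{p_j}=\beta_j+\nu_j$, not $\mu_i\ge\beta_j+\nu_j$. So neither $\mu_i\ge\alpha_i=\beta_j$ nor $\mu_{p_{j+1}-1}-\mu_{p_{j+1}}\ge\mu_{p_j}-\mu_{p_{j+1}}$ is available. Likewise, the final drop after $p_k$ would need $\mu_{p_k+1}\le\nu_k$, which nothing guarantees.

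Two counterexamples:
\begin{itemize}
\item For $\beta=(2)$, $\nu=\emptyset$, $\rho=(1)$ you get $\mu=(2,1)$ and $\alpha=(2)$, so $\mu-\alpha=(0,1)$ is not a partition.
\item For $\beta=(5,1)$, $\nu=\emptyset$, $\rho=(3)$ you get $\mu=(5,3,1)$ and $\alpha=(5,5,1)$, which gives a negative entry.
\end{itemize}
In the second example, the only $\alpha$ that contains the parts of $\beta$ and has $\mu-\alpha$ a partition is $\alpha=(5,3,1)$. So no $\nu'$ made only of repeated parts of $\beta$ can succeed. In general $\nu'$ must absorb (lowered copies of) the parts of $\rho$.

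Here is a correct construction. Place all rows $\gamma_j=\beta_j+\nu_j$ of $\gamma=\nu\wedge\beta$ (not only the first $k$) at positions $q_1<q_2<\cdots$ of $\mu$, and set $\alpha_{q_j}=\beta_j$. Use the convention $\beta_j=\nu_j=0$ beyond their lengths. For a position $i$ carrying a part of $\rho$:
\begin{itemize}
\item if $q_j<i<q_{j+1}$, set $\alpha_i=\max(\beta_{j+1},\mu_i-\nu_j)$;
\item if $i<q_1$, set $\alpha_i=\beta_1$.
\end{itemize}
Since $\gamma_{j+1}\le\mu_i\le\gamma_j$, this gives $\beta_{j+1}\le\alpha_i\le\beta_j$ and $\nu_{j+1}\le\mu_i-\alpha_i=\min(\mu_i-\beta_{j+1},\nu_j)\le\nu_j$. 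Both $\alpha_i$ and $\mu_i-\alpha_i$ are nondecreasing functions of $\mu_i$, and they match the values $\beta_j,\nu_j$ at the positions $q_j$. Hence $\alpha=\nu'\vee\beta$ and $\mu-\alpha$ are partitions, which gives $\mu=(\mu-\alpha)\wedge(\nu'\vee\beta)$.

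This is the partition-level form of the paper's argument. The paper writes $\vee$ and $\wedge$ as insertions of $1$'s and $0$'s into a representative of $\str(\beta)$, and observes that the two kinds of insertion can be done in either order. Your conjugation trick remains a valid way to get the reverse inclusion for free once the forward inclusion is proved correctly.
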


\begin{proof}
Let $f\in\Lambda$.  By definition, $f\in \Lambda\vee(\Lambda\wedge\mathcal B)$
if and only if it can be written as a finite sum
\[
f = \sum_{i=1}^N c_i\bigl(\nu_i\vee(\mu_i\wedge\lambda_i)\bigr),
\]
for some $N\in\NN$, $c_i\in K$, partitions $\nu_i,\mu_i$, and
$\lambda_i\in\mathcal B$.

Similarly, $f\in \Lambda\wedge(\Lambda\vee\mathcal B)$ if and only if it is a
finite sum of expressions of the form
\[
f = \sum_{i=1}^{N'} c_i'
\bigl(\nu_i' \wedge (\mu_i' \vee \lambda_i')\bigr),
\]
with $\lambda_i'\in\mathcal B$ and partitions $\nu_i',\mu_i'$.

Thus it suffices to show that for any partitions $\mu,\nu$ and any partition
$\lambda$ there exist partitions $\mu',\nu'$ such that
\[
\mu\vee(\nu\wedge\lambda) = \nu'\wedge(\mu'\vee\lambda).
\]

Fix $\lambda$ and consider its string $\str(\lambda)$.  
The disjoint product $\lambda\vee\mu$ corresponds to inserting $1$'s into a
representative of $\str(\lambda)$ in $\mathcal P$, and every way of inserting
$1$'s into such a representative is realized by some disjoint product
$\lambda\vee\mu$.  
Similarly, the joint product $\lambda\wedge\mu$ corresponds to inserting $0$'s
into a representative of $\str(\lambda)$, and every insertion of $0$'s arises in
this way.  
This follows from the description of the joint product in terms of dual
partitions and the fact that the string of $\lambda^\perp$ is obtained from
$\str(\lambda)$ by reversing order and swapping $0$ and $1$.

Now fix a string $a$.  If a string $b$ can be obtained from $a$ by first
inserting $1$'s and then inserting $0$'s, then $b$ can be obtained equally well
by first inserting $0$'s and then inserting $1$'s.  
Translating back to partitions, this shows that for any $\mu,\nu$ there exist
$\mu',\nu'$ with
\[
\mu\vee(\nu\wedge\lambda) = \nu'\wedge(\mu'\vee\lambda),
\]
as required.
\end{proof}

\subsection{A total monomial order in the ring of symmetric functions}\label{subsec_revlex_order}
We extend the partial order on partitions defined by divisibility to a total order $\geq$ on the corresponding MSFs in $\Lambda$. This is an analogue of a monomial ordering in a polynomial ring. In a similar manner to a Gröbner basis helping to find the Hilbert series of a typical ideal, we will find a finite number of elements of $I_\infty$ that, in a certain sense, serve the same role as a Gröbner basis. As it turns out, the most beneficial order is a reverse-length lexicographical order.

Let $\lambda,\mu$ be two integer partitions. We say that $\lambda \geq \mu$ if
\begin{enumerate}[label=(\roman*)]
    \item $\len(\lambda)  <  \len(\mu)$ or
    \item $\len(\lambda) = \len(\mu)$ and the first nonzero entry of $(\lambda_i-\mu_i)_i$ is positive.
\end{enumerate}

Note that this implies both
\[
\lambda \geq \mu \Rightarrow \lambda \vee \nu \geq \mu\vee\nu 
\]
and
\[
 \lambda \geq \mu \Rightarrow \lambda \wedge \nu \geq \mu\wedge\nu\quad \textnormal{if}\  \len(\nu) \leq \len(\lambda).
\]
for any partition $\nu$. Thus the disjoint product preserves the monomial ordering. Any joint product $\lambda\wedge\nu$ with $\ell_1 = \len(\nu)> \len(\lambda) = \ell_2$ can be replaced by a joint product with the first $\ell_2$ entries of $\nu$ and a disjoint product of the rest. Explicitly
\[
\lambda\wedge \nu = (\lambda \wedge \nu')\vee \nu''
\]
where $\nu' = (\nu_1,\dots,\nu_{\ell_2})$ and $\nu'' = (\nu_{\ell_2+1},\dots,\nu_{\ell_1})$. This new operation preserves the monomial ordering as well. Hence $\geq$ is indeed an extension of divisibility.

\begin{definition}\label{def_initial_MSF}
    Given any symmetric function $f\in\Lambda$, there is a unique way to write it as a finite sum of MSFs. The \emph{initial} MSF of $f$, denoted $\init(f)$, is the $\geq$-maximal such MSF.

    Given a graded subspace $V\subset\Lambda$, its \emph{initial space} is the graded vector space
    \[
    \init(V) := \langle \init(f)\mid f\in V \rangle_K.
    \]
\end{definition}

\subsection{The structure of the initial space}
Taking the initial space of a symmetric limit $\init(I_\infty)$ makes it into a joint-disjoint closed set. This leads to a finite generation result up to joint-disjoint closure.

\begin{proposition}\label{prop_initial_ideal_closed_cond}
    Let $I \subset K[X]$ be a homogeneous symmetric ideal and $I_\infty$ its symmetric limit in $\Lambda$. Then $\init(I_\infty)$ is joint-disjoint closed.
\end{proposition}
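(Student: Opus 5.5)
The plan is to verify the two closure properties directly on initial MSFs. Since $\init(I_\infty)$ is spanned by MSFs $\lambda=\init(f)$ with $f\in I_\infty$, and both products are bilinear, it suffices to prove the following. For every such $\lambda$ and every partition $\mu$, both $\mu\vee\lambda$ and $\mu\wedge\lambda$ are again of the form $\init(g)$ with $g\in I_\infty$. Fix a representative $f\in I_n^{\Sym}$ with $f=\sum_\nu c_\nu m_\nu$ and $\init(f)=\lambda$, $c_\lambda\neq 0$. By the choice of order, $\lambda$ has minimal length among the partitions $\nu$ with $c_\nu\ne 0$, and it is lex-largest among those of that length.

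\emph{Disjoint product.} Let $k=\len(\mu)$ and let $u=x_{n+1}^{\mu_1}\cdots x_{n+k}^{\mu_k}$, a monomial in fresh variables. Then $uf\in I_{n+k}$, so $g:=\phi_N(uf)\in I_N^{\Sym}$ for $N\ge n+k$. Every monomial of $f$ has support disjoint from that of $u$. Hence each monomial of shape $\nu$ in $f$ contributes, after symmetrization, a positive multiple of the MSF $\nu\vee\mu$. Thus $g=\sum_\nu c_\nu a_\nu\, (\nu\vee\mu)$ with all $a_\nu>0$. The map $\nu\mapsto\nu\vee\mu$ is injective and, as noted above, order preserving. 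Therefore no cancellation occurs and $\init(g)=\lambda\vee\mu$.

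\emph{Joint product.} First reduce to a special case. By the identity $\lambda\wedge\mu=(\lambda\wedge\mu')\vee\mu''$ and the disjoint case, we may assume $\len(\mu)\le\ell:=\len(\lambda)$. Such a $\mu$ is an iterated joint product of columns $(1^{k})$ with $k=\mu^\perp_j\le \ell$. A joint product with a column of length at most $\ell$ does not change the length $\ell$, so inducting over the columns reduces everything to $\mu=(1^k)$ with $k\le\ell$. In that case take $g=e_k(x_1,\dots,x_N)\,f\in I_N^{\Sym}$ for $N$ large. A monomial of shape $\nu$ multiplied by a squarefree monomial of degree $k$ has shape obtained from $\nu$ by adding $1$ to $k$ of its entries, zero entries included. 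Such a shape has length at least $\len(\nu)\ge\ell$, with equality only if the ones are added to existing nonzero parts.

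I then compare the candidate leading terms. Among terms of length exactly $\ell$, each arises from some $\nu$ of length $\ell$ with $\nu\le\lambda$ lexicographically. For such $\nu$, adding ones to an arbitrary set of $k$ parts gives something lex at most $\nu+(1^k)$. In turn $\nu+(1^k)$ is lex strictly smaller than $\lambda+(1^k)=\lambda\wedge(1^k)$ whenever $\nu\ne\lambda$, because the first differing entry is unchanged in relative order. Also, no choice of ones other than the first $k$ parts of $\lambda$ itself yields the sorted shape $\lambda\wedge(1^k)$ with the same lex value unless it produces the same partition. All such contributions carry the same sign as $c_\lambda$ times positive counting weights. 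Hence the coefficient of $\lambda\wedge(1^k)$ in $g$ is $c_\lambda$ times a positive number, and every other MSF in $g$ is smaller. This gives $\init(g)=\lambda\wedge(1^k)$.

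The main obstacle is exactly this leading-coefficient control in the joint case. One must check that different monomials of $f$, and different placements of the ones, cannot produce the shape $\lambda\wedge(1^k)$ with cancelling signs. The argument above handles this by showing that the only contributions come from $\nu=\lambda$ and enter with positive multiplicity. The normalizations of the $m_\nu$ depend on $N$, but they stay positive, so they do not affect this.
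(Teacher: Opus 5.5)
Your proof is correct and follows the paper's argument. The disjoint case is identical: you multiply a lift by a monomial in fresh variables, symmetrize, and use that $\nu\mapsto\nu\vee\mu$ is strictly order preserving. In the joint case you make the same reduction to $\len(\mu)\le\len(\lambda)$ and then multiply a lift by a symmetric polynomial, as the paper does. The one difference is that you factor $\mu$ into columns and multiply by $e_k$ rather than by $m_\mu$ directly, which makes explicit the leading-coefficient (no-cancellation) check that the paper only asserts. One small correction: take $N=n$, or replace $f$ by its image $\phi_N(f)$, so that $e_k(x_1,\dots,x_N)f$ really lies in $I_N^{\Sym}$.
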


\begin{proof}
    In both cases it suffices to check multiplication of MSFs, since $\init(I_\infty)$ is, by definition, generated by MSFs. Let $\lambda\in \init(I_\infty)$ be any partition in the initial space. Then there exists an $f\in I_\infty$ such that $\init(f) = \lambda$. We write it as 
    \[
    f = \lambda + \sum_{i=1}^N c_i \lambda_i
    \]
    where $c_i \in K$ and $\lambda_i$ partitions for all $i=1,\dots,N$.
    Fix any partition $\mu = (\mu_1,\dots,\mu_k)$. Since $I_\infty$ is a direct limit, there exists $n$ large enough such that we can find a preimage of $f$,
    \[
    f_{(n)} = m_\lambda + \sum_{i=1}^N c_im_{\lambda_i} \in I_n^\Sym
    \]
    where $m_\lambda$ and $m_{\lambda_i}$ are monomial symmetric polynomials in $K[X_n]^\Sym$. In particular, $f_{(n)} \in I_n$ and therefore
    \[
    g_{(n)} := \phi_{n+k}(f_{(n)}\cdot x_{n+1}^{\mu_1}\dots x_{n+k}^{\mu_k}) \in I_{n+k}.
    \]
    But $g_{(n)}$ is also a symmetric polynomial, thus $g_{(n)}\in I_{n+k}^\Sym$. It is given by
    \[
    g_{(n)} = m_{\mu \vee\lambda} + \sum_{i=1}^N c_i m_{\mu\vee \lambda_i}
    \]
    since the support of the monomial $x_{n+1}^{\mu_1}\dots x_{n+k}^{\mu_k}$ is disjoint from the support of all monomials of $f_{(n)}$. Because $I_\infty$ is a direct limit of the $I_n^\Sym$, we can conclude that
    \[
    g := \phi_\infty(g_{(n)}) = \mu\vee \lambda + \sum_{i=1}^N c_i\cdot\mu\vee\lambda_i \in I_\infty.
    \]
    The disjoint product is order preserving, thus $\init(g) = \mu\vee\lambda$ and therefore $\mu\vee \lambda \in \init(I_\infty)$ as required.

    For the second part, take $f$ with $\init(f) = \lambda$ as above and fix a partition $\mu$ of length $k$. We want to show that $\lambda \wedge \mu \in \init(I_\infty)$. If $k = \len(\mu) > \len(\lambda) = \ell$, we write the joint product as
    \[
    \lambda \wedge \mu = (\lambda\wedge \underbrace{(\mu_1, \dots, \mu_\ell)}_{=: \mu'})\vee \underbrace{(\mu_{\ell+1},\dots,\mu_k)}_{=: \mu''}
    \]
    as discussed in the previous section.
    By the previous part of the proof, if $\lambda \wedge \mu' \in \init(I_\infty)$, then also $(\lambda \wedge \mu')\vee \mu''$. Thus we may assume that $\len(\mu) \leq \len(\lambda)$.
    
    For $n$ large enough, $f_{(n)}\in I_n^\Sym$ and $m_\mu\in K[X_n]^\Sym$ as above. Then $f_{(n)}\cdot m_\mu \in I_n^\Sym$. Recall that the monomial symmetric polynomials are normalized sums of monomials of their corresponding shape.
    Writing the product as a sum of monomial symmetric polynomials
    \[
    f_{(n)}\cdot m_\mu = m_\lambda m_\mu + \sum_{i=1}^N c_i m_{\lambda_i}m_\mu
    \]
    we see that the monomial symmetric polynomial corresponding to the maximal partition is exactly $c\cdot m_{\lambda\wedge\mu}$ for some fraction $c$, since $\len(\mu) \leq \len(\lambda)$.
    Hence $\init(f\cdot \mu) = c\cdot \lambda\wedge\mu \in \init(I_\infty)$ as required.
\end{proof}

\begin{lemma} \label{initial_ideal_const_dim}
    Let $V \subset \Lambda$ be a graded linear subspace. Then $\dim_K{[V]}_d = \dim_K{[\init(V)]}_d$ for all $d\in \NN$.
\end{lemma}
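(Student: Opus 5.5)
The plan is the usual Gröbner-style triangularization argument, carried out inside a single graded piece. Fix $d$. The space $[\Lambda]_d$ has as basis the MSFs $\lambda$ with $\lambda\vdash d$. This basis is finite, and the order $\geq$ restricts to a total order on it. Since $V$ is graded, $[V]_d=V\cap[\Lambda]_d$, and every nonzero $f\in[V]_d$ is a combination of MSFs of degree $d$, so $\init(f)$ is a partition of $d$. We must first make sure that $[\init(V)]_d$ is spanned only by the $\init(f)$ with $f\in[V]_d$. For homogeneous $V$, an arbitrary $f\in V$ has its initial MSF lying in some homogeneous component $f_e\in[V]_e$. Either $\init(f)=\init(f_e)$, or we read the definition of $\init(V)$ as being built from homogeneous elements. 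We adopt the latter reading, which is harmless since all subspaces used later are graded. So $[\init(V)]_d$ is the span of the distinct partitions $\init(f)$ with $0\neq f\in[V]_d$.

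Next we show $\dim[\init(V)]_d\le\dim[V]_d$. Let $\lambda^{(1)}>\dots>\lambda^{(r)}$ be the distinct initial MSFs occurring, and choose $f_j\in[V]_d$ with $\init(f_j)=\lambda^{(j)}$. Order the $f_j$ by their initial terms. Then the coefficient matrix of $f_1,\dots,f_r$ in the MSF basis is triangular: $f_j$ has a nonzero coefficient at $\lambda^{(j)}$ and zero coefficient at every $\lambda^{(i)}$ with $i<j$. Hence the $f_j$ are linearly independent. (Equivalently, a nontrivial combination $\sum a_jf_j$ has initial MSF $\lambda^{(j_0)}$, where $j_0$ is the smallest index with $a_j\neq 0$, so it is nonzero.)

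For the reverse inequality $\dim[V]_d\le r$, we reduce. Take any $g\in[V]_d$. If $g\neq0$, then $\init(g)=\lambda^{(j)}$ for some $j$, so subtracting a suitable scalar multiple of $f_j$ gives an element of $[V]_d$ that is either zero or has strictly smaller initial MSF. Since $[\Lambda]_d$ has only finitely many basis elements, this process terminates at $0$. Therefore $g\in\langle f_1,\dots,f_r\rangle$, and the $f_j$ form a basis of $[V]_d$, giving $\dim[V]_d=r=\dim[\init(V)]_d$.

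No step is a serious obstacle. The one point needing care is the one addressed at the start: $\init(V)$ must be computed degreewise, so that each initial MSF of degree $d$ comes from an element of $[V]_d$. The second point is well-foundedness, which the finiteness of $\{\lambda\vdash d\}$ guarantees. Note that $\geq$ is not a well-order on all partitions, because lengths grow; the restriction to a single degree is what makes the reduction terminate.
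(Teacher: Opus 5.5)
Your proof is correct and is essentially the paper's argument. The paper only cites Macaulay's basis theorem (Eisenbud, Theorem 15.3), whose proof is exactly your leading-term triangularity plus reduction; your degreewise version is the right way to carry it over, since $V$ is merely a linear subspace and $\geq$ is not a well-order on all of $\Lambda$. Your hedge about non-homogeneous $f$ is unnecessary: MSFs of different degrees cannot cancel, so $\init(f)=\init(f_e)$ for the homogeneous component $f_e\in V$ containing the leading MSF, and no reinterpretation of $\init(V)$ is needed.
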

This is a direct consequence of \cite[Theorem 15.3]{Eis95}.

\begin{proposition}\label{prop_special_fin_gen}
    Let $V \subset \Lambda$ be a graded linear subspace that is joint-disjoint closed and $\init(V) = V$. Then there exists a finite set $\mathcal{B} \subset V$ such that
    \[
    \Lambda\vee(\Lambda\wedge\mathcal{B}) = \Lambda\wedge(\Lambda\vee\mathcal{B}) = V.
    \]
\end{proposition}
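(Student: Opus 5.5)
The plan is to reduce the statement to Higman's lemma via the string encoding of partitions. Since $\init(V)=V$, the space $V$ is spanned by the MSFs it contains: every element of $V$ is a sum of MSFs, and one can peel off initial terms inductively, each lying in $V$. Let $P_V$ be the set of partitions $\lambda$ with $\lambda\in V$. Then $V=\langle P_V\rangle_K$, so it suffices to find a finite $\mathcal B\subset P_V$ with $P_V\subset \Lambda\vee(\Lambda\wedge\mathcal B)$. The reverse inclusion is automatic, because $V$ is joint--disjoint closed, and the equality of the two closures is Lemma~\ref{lemma_joint_disjoint_closed}.

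The first step is to identify the monomial part of the closure. By the proof of Lemma~\ref{lemma_joint_disjoint_closed}, a partition $\nu$ lies in $\Lambda\vee(\Lambda\wedge\{\lambda\})$ if and only if some representative of $\str(\nu)$ arises from a representative of $\str(\lambda)$ by inserting $0$'s and $1$'s. I will check that this is equivalent to $\lambda\mid\nu$ in the sense of Definition~\ref{def_partition_divisibility}. The only subtlety is the equivalence relation $s\sim 0s\sim s1$. To handle it, I pad representatives with enough leading $0$'s and trailing $1$'s so that a subsequence embedding between canonical strings extends to one between padded representatives, and conversely. Since $\vee$ and $\wedge$ are bilinear, for a set $\mathcal B$ of partitions the span of $\Lambda\vee(\Lambda\wedge\mathcal B)$ is exactly the span of the partitions divisible by some element of $\mathcal B$. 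Consequently $P_V$ is an upward-closed subset for divisibility; this also follows directly from $V$ being closed and spanned by monomials.

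The second step takes $\mathcal B$ to be the set of divisibility-minimal elements of $P_V$. Divisibility is a well partial order by Higman's lemma. Hence the set of minimal elements is an antichain and therefore finite, and every element of $P_V$ lies above some minimal element, since there are no infinite strictly descending chains. It follows that $P_V\subset\{\nu : \beta\mid\nu \text{ for some } \beta\in\mathcal B\}\subset \Lambda\vee(\Lambda\wedge\mathcal B)$, which completes the proof.

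I expect the main obstacle to be the first step. The translation between joint and disjoint products and insertions of $0$'s and $1$'s must be matched exactly with the subsequence relation on $\mathcal P$, including the edge cases at the ends of the strings where the equivalence $\sim$ is used. The plan is to rely on the insertion description already established in the proof of Lemma~\ref{lemma_joint_disjoint_closed} and only verify that arbitrary subsequence embeddings can be realized after suitable padding.
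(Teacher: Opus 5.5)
Your proposal is correct and follows the paper's own argument: $V$ is spanned by the MSFs it contains, membership of a partition in the joint--disjoint closure is exactly string divisibility, and Higman's lemma then gives a finite generating set. The paper enumerates the MSFs in $V$ and discards every one divisible by an earlier one, whereas you take the divisibility-minimal elements; this is the same well-partial-order argument. Your explicit check that the leading-$0$/trailing-$1$ padding does not break the correspondence between closure membership and divisibility fills in a step the paper treats as evident.
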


\begin{proof}
    Let $\mathcal B =  \{ \lambda \in V\mid \lambda\textnormal{ a MSF}\}$ be the set of all MSFs in $V$. By assumption $V = \Lambda\vee(\Lambda\wedge\mathcal{B})$.

    Let $(\lambda^{(i)})_{i\in\NN}$ be any enumeration of the countably many elements of $\mathcal{B}$. We remove $\lambda^{(j)}$ from the list, if there exists $i< j$ with $\lambda^{(i)}\mid\lambda^{(j)}$ using the notion of divisibility from Definition \ref{def_partition_divisibility}. This gives rise to a new sequence $(\lambda^{(i)})_{i\in I}$ and generating set $\mathcal{B}' := \{ \lambda^{(i)}\mid i\in I\}$. By transitivity of divisibility, these are independent of the order of removal of the $\lambda^{(j)}$. According to our definition of divisibility, we still have
    \[
    \Lambda\vee(\Lambda\wedge\mathcal{B}') = V.
    \]
    Divisibility is a well partial order and $(\lambda^{(i)})_{i\in I}$ is a sequence which no longer has any divisible pairs $\lambda^{(i)} \mid \lambda^{(j)}$ for $i < j$, thus it must be a finite sequence. This makes $\mathcal{B}'$ a finite joint-disjoint generating set for $V$.
\end{proof}

This general result on finite generation combined with the fact that $\init(I_\infty)$ is indeed joint-disjoint closed from Proposition~\ref{prop_initial_ideal_closed_cond} gives us a finite generation result for all symmetric limits of ideals in $K[X]$ independent of condition~$(\dagger)$.

The finite generation result is too general to show rationality of the symmetric Hilbert series. There exist many examples of finitely generated joint-disjoint closed spaces $V\subset \Lambda$ that do not have a rational symmetric Hilbert series. We demonstrate this with a simple counterexample in the next section. We have to use an approach that does not rely on finite generation for the proof of Theorem \ref{thm_main}.

\subsection{Partition rank decomposition}
The proof of Theorem \ref{thm_main} relies on the notion of rank of a partition from Definition \ref{def_rank} and the joint-disjoint closedness of the initial space of the symmetric limit. We first give a technical characterization of condition $(\dagger)$.

\begin{lemma}\label{lemma_dagger_block_eq}
    Let $I\subset K[X]$ be a homogeneous symmetric ideal. $I$ satisfies condition $(\dagger)$ if and only if, for some $a,b > 0$, $\init(I_\infty)$ contains a MSF of block shape
    \[
    (b^a) = (\underbrace{b,\dots,b}_{a\textnormal{ times}}).
    \]
\end{lemma}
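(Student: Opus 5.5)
We prove the two directions separately. Both rest on how a polynomial in $I_n$ becomes, after symmetrization, a combination of MSFs whose $\geq$-largest term is the shape of greatest length. The guiding observation is this. Suppose a monomial $m_1$ has support of size $a$ and every other monomial $m_i$ of $f$ has $\Supp(m_i)\not\subset\Supp(m_1)$. Multiplying $f$ by a monomial supported exactly on $\Supp(m_1)$ with large exponents makes $m_1$ the unique contributor to a block-like shape of length $a$. Every other term then either involves a variable outside $\Supp(m_1)$, which raises the length, or has a different profile.

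\textbf{($\Rightarrow$).} Let $f=\sum c_i m_i\in I$ satisfy $(\dagger)$, and put $a=|\Supp(m_1)|$ and $D=\deg f$. Relabel so that $\Supp(m_1)=\{x_1,\dots,x_a\}$. Choose $b>D$, and let $u$ be the monomial with $u\,m_1=x_1^b\cdots x_a^b$, so $u=\prod_{j\le a} x_j^{\,b-\alpha_j}$, where $\alpha$ is the exponent vector of $m_1$. Then $uf\in I_n$ for large $n$. Symmetrize it: $g=\phi_\infty(uf)\in I_\infty$ has the term $c_1\cdot(b^a)$ coming from $m_1$. For $i\ne 1$, $u m_i$ contains a variable outside $\{x_1,\dots,x_a\}$ together with all of $x_1,\dots,x_a$, because $u$ has every exponent $\ge b-D>0$. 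So its shape has length $>a$, and it is strictly smaller than $(b^a)$ in the reverse-length order. Two points need checking. First, no other monomial of $uf$ has shape $(b^a)$, so there is no cancellation; this holds because the length of $u m_i$ exceeds $a$. Second, $\phi_\infty$ of a monomial is a positive multiple of the MSF of its shape, by the normalization in Definition~\ref{def_mon_sym_poly}. It follows that $\init(g)=(b^a)$, up to a nonzero scalar.

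\textbf{($\Leftarrow$).} Suppose $(b^a)=\init(h)$ for some $h\in I_\infty$. Lift $h$ to $h_{(n)}\in I_n^{\Sym}\subset I$ and write
\[
h_{(n)} = m_{(b^a)} + \sum_j c_j\, m_{\lambda_j},
\]
where every $\lambda_j<(b^a)$. That means each $\lambda_j$ either has length $>a$, or has length $a$ and is lexicographically smaller. Expanded in monomials, $h_{(n)}$ contains the monomial $m_1=x_1^b\cdots x_a^b$ with nonzero coefficient, since the distinct MSFs have disjoint monomial supports. Take any other monomial $m$ of $h_{(n)}$ with $\Supp(m)\subset\Supp(m_1)$. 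Then $\len(\shape m)\le a$, so $\shape(m)$ is $(b^a)$ itself or some $\lambda_j$ of length exactly $a$. In either case $\Supp(m)=\Supp(m_1)$, and there is no degree room left. Here is the obstacle: some $\lambda_j$ of length $a$ with the same support, for example $(b+1,b-1,b,\dots)$, could occur. To deal with it I will not use $h_{(n)}$ directly. Instead I pick a monomial $m_1$ of $h_{(n)}$ whose support is minimal under inclusion among all monomials of $h_{(n)}$, and I regard monomials sharing exactly that support as a single class. Then I reduce $h_{(n)}$ by setting to zero every variable outside $S=\Supp(m_1)$. This is allowed because $I$ is symmetric and homogeneous, and the specialization $x_j\mapsto 0$ maps $I$ into $I$: it is the limit of products with $\Sym$-translates? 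That step fails. I will instead use the fact that $\init$ is strictly maximal, so if the support-$S$ part contains several monomials I replace $m_1$ by one of them. Condition $(\dagger)$ only asks for one distinguished monomial, and the other support-$S$ monomials violate it. The fallback is to multiply $h_{(n)}$ by a generic combination of translates to separate them. I expect this uniqueness issue to be the main difficulty. I would first try to show that among length-$a$ shapes with support $S$, the $\geq$-maximal monomial $x^\beta$ can be isolated by multiplying with $\prod_{j\in S}x_j^{N\beta_j}$ for large $N$ and symmetrizing, so that its shape strictly dominates the shapes coming from the others.
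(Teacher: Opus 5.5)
\textbf{The ($\Rightarrow$) direction.} This is correct and is the paper's argument. Your choice $b>D$ and the paper's $b=1+\max_j\alpha_j$ both just ensure that every exponent of $u$ is positive, so each $u\,m_i$ with $i\ne 1$ has length $>a$.

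\textbf{The ($\Leftarrow$) direction has a genuine gap.} You stop at an ``obstacle'' and list workarounds, none of which you carry out: specialization $x_j\mapsto 0$ (which, as you notice, does not preserve $I$), generic combinations of translates, and isolating a monomial with $\prod_{j\in S}x_j^{N\beta_j}$.

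\textbf{The missing observation.} The obstacle does not exist; you only need a short fact about the order.
\begin{enumerate}
\item Replace $h$ by its homogeneous component of degree $ab$. This component lies in $I_\infty$, because $I_\infty$ is graded. Its initial MSF is still $(b^a)$, since its terms are a subset of those of $h$.
\item Let $\lambda\vdash ab$ have length $a$. Then $ab\le a\lambda_1$ gives $\lambda_1\ge b$, with equality only when $\lambda=(b^a)$. So every length-$a$ shape of degree $ab$ other than $(b^a)$ is strictly \emph{larger} than $(b^a)$ in $\geq$.
\item Every $\lambda_j$ is strictly smaller than $(b^a)$, so none of them has length $a$; all have length $>a$.
\end{enumerate}
Your example, written in sorted form as $(b+1,b^{a-2},b-1)$, has first part $b+1>b$. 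It is therefore excluded by the very hypothesis $\init(h)=(b^a)$.

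\textbf{Finishing the argument.} Let $m$ be a monomial of $h_{(n)}$ with $\Supp(m)\subset\Supp(m_1)$. Then $\len(\shape(m))\le a$, which forces $\shape(m)=(b^a)$ and $\Supp(m)=\Supp(m_1)$. Since $x_1^b\cdots x_a^b$ is the only monomial of that shape on that support, $m=m_1$. This is condition $(\dagger)$, and it is exactly how the paper concludes.

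The homogeneity reduction in step 1 is genuinely needed. In a non-homogeneous $h$, a lower-degree term such as $((b-1)^a)$ really is smaller than $(b^a)$ and could sit on the same support as $m_1$.
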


\begin{proof}
    Suppose $I$ satisfies condition $(\dagger)$.
    Recall that the monomial order $\geq$ is the reverse-length lexicographical order. MSFs in $\Lambda$ of shorter length will always be greater than longer MSFs using $\geq$. 
    The ideal $I$ satisfies condition~$(\dagger)$, so we find an $f\in I$ with $f = m_1 + \sum_{i=2}^N c_im_i$ with $\Supp (m_i) \not\subset \Supp (m_1)$ for $i\geq 2$ as prescribed. By symmetry of $I$, we may assume that $\Supp (m_1) = \{1,\dots,a\}$ and all other monomials of $f$ have support containing some number greater than $a$. We write $m_1 = x_1^{\alpha_1}\dots x_a^{\alpha_a}$ and set $b = 1+ \max\{ \alpha_i\mid i=1,\dots,a\}$. Then
    \[
    f' := \phi_\infty( f \cdot x_1^{b-\alpha_1}\dots x_a^{b-\alpha_a}) \in I_\infty
    \]
    has initial MSF $(b^a)$, since all other MSFs appearing in $f'$ are, by our construction, of greater length.

    Now suppose $\init(I_\infty)$ contains a MSF of shape $(b^a)$. Then there exists a homogeneous $f\in I_\infty$ with initial term $(b^a)$. Let $\lambda$ be the shape of some term appearing in $f$. Since $(b^a) \geq \lambda$ in the reverse-length lexicographical order, we have $\len(\lambda) \geq a$. If $\len(\lambda) = a$, we can conclude that $\lambda = (b^a)$ since this is the smallest shape of degree $ab$ and length $a$.

    Finally, let $f_{(n)} \in I_n$ be a lift of $f$ such that the total symmetrization is $\phi_\infty(f_{(n)}) = f$. It exists for some $n$ large enough by the properties of the direct limit. The polynomial $f_{(n)}$ must have a term $m_1$ of shape $\shape(m_1) = (b^a)$. Suppose there exists any other term $m$ of $f_{(n)}$ such that $\Supp(m)\subset \Supp(m_1)$, then $\len(\shape(m)) \leq a$. Thus by the previous argument, $\len(\shape(m)) = a$ and therefore $\shape(m) = \shape(m_1) = (b^a)$.
    
    There is only one monomial of this shape for the fixed support $\Supp{(m_1)}$, thus $m = m_1$ and condition $(\dagger)$ is satisfied.
\end{proof}

\begin{proof}[Proof of Theorem \ref{thm_main}]
    Let $I\subset K[X]$ be a homogeneous symmetric ideal for which condition~$(\dagger)$ holds. We denote the symmetric limit of $I$ as usual by $I_\infty$.

    By Lemma \ref{initial_ideal_const_dim}, we have
    \[
    H^\Sym(K[X]/I;t) = H(\Lambda/I_\infty; t) = H(\Lambda/\init(I_\infty);t).
    \]

    We grade $\Lambda/\init(I_\infty)$ by partition rank.
    Consider the vector space
    \[
    R_m := \langle \lambda \notin \init(I_\infty) \mid \rk (\lambda) = m \rangle_K
    \]
    and note that we have a vector space decomposition
    \[
    \Lambda/\init(I_\infty) = \bigoplus_{m = 0}^\infty R_m.
    \]
    
    With a slight shift in perspective, each $R_m$ can be seen as a finitely generated module over the polynomial ring
    \[
    A_m := K[y_1,z_1,\dots,y_m,z_m]
    \]
    in $2m$ variables.
    Given any MSF $\lambda \in R_m$, we define $A_m$-multiplication by linearly extending the multiplication defined for monomials in $A_m$ and MSFs in $\Lambda$,
    \[
    y_1^{c_1} y_2^{c_2}\dots y_m^{c_m} z_1^{d_1} z_2^{d_2}\dots z_m^{d_m} \cdot \lambda := {(m^{d_m},\dots,2^{d_2},1^{d_1})}\vee \Big({(m^{c_m},\dots,2^{c_2},1^{c_1})^\perp}\wedge \lambda\Big).
    \]
    
    By Proposition~\ref{prop_initial_ideal_closed_cond}, $\init(I_\infty)$ is closed under this $A_m$-multiplication. We therefore only need to check that $A_m$-multiplication does not change the rank of a rank $m$ partition and of course that this module structure is well-defined.
    
    Multiplication with a monomial in the $y_i$ only changes the first $m$ entries of $\lambda$ and $\rk(\lambda) = m$ guarantees that multiplication with a monomial in the $z_i$ can never change the first $m$ entries, as it can only add new entries smaller than $m+1$. Thus neither multiplication with a monomial in the $y_i$ nor multiplication with a monomial in the $z_i$ can increase the rank of a MSF $\lambda$. Neither joint nor disjoint multiplication decrease the rank of a partition. This shows that $R_m$ is closed under the defined multiplication.
    
    Finally, we have $f\cdot(g\cdot \lambda) = (fg) \cdot \lambda$ for any $f,g \in A_m$. It suffices to check this on monomials. The joint and disjoint products are commutative, so we only have to check $y_i\cdot(z_j\cdot\lambda)=z_j\cdot(y_i\cdot\lambda)$. This equality holds, since $y_i$ only changes the first $m$ entries of $\lambda$ and multiplication with $z_j$ disjointly adds entries smaller than $m+1$. But the rank of $\lambda$ is $m$. Thus $z_j$-multiplication only affects entries of $\lambda$ after the first $m$ entries.
    All other properties making $R_m$ an $A_m$-module are clear. It is generated by the partition $(m^m)$ which is the rank $m$ partition of minimal degree.
 
    By our assumption, $I$ satisfies condition $(\dagger)$. Thus Lemma \ref{lemma_dagger_block_eq} shows that $\init(I_\infty)$ contains a partition $(b^a)$ of block shape. Extending by joint and disjoint product, we find that the MSF $(M^M)$ is an element of $\init(I_\infty)$ for all $M\geq \max\{a,b\}$.
    In particular $R_M = \{0\}$ for all such $M$.
    Thus we can write
    \[
    \Lambda/\init(I_\infty) = \bigoplus_{m=0}^{\max\{a,b\}-1} R_m,
    \]
     a finite sum of finitely generated modules over polynomial rings.
     We have
     \[
     H(\Lambda/\init(I_\infty);t) = \sum_{m=0}^{\max\{a,b\}-1} H(R_m;t).
     \]
     By the Hilbert--Serre theorem, the Hilbert series of a graded, finitely generated module over a polynomial ring, such as $R_m$, is a rational function. A finite sum of rational functions is rational.
\end{proof}

The proof of Corollary \ref{geometric_cor} is now a simple computation, showing that the geometric condition implies $(\dagger)$. 
\begin{proof}[Proof of Corollary \ref{geometric_cor}]
    Let $I\subset K[X]$ be a homogeneous symmetric ideal with vanishing set
    \[
    V_{\overline{K}}(I) \subset \Delta_p \cup H_q
    \]
    where $\overline{K}$ is the algebraic closure of $K$. The vanishing ideal of $\Delta_p$ is
    \[
    I(\Delta_p) = (x_i^p-x_j^p\mid i,j \geq 1)
    \]
    and the vanishing ideal of $H_q$ is given by
    \[
    I(H_q) = (x_{i_0}\dots x_{i_q}\mid 1 \leq i_0 < i_1 < \dots < i_q ),
    \]
    the ideal generated by all square-free monomials of sufficient length.
    Let
    \[
    f := x_1\dots x_q x_{q+1}^p - x_1\dots x_q x_{q+2}^p \in I(\Delta_p \cup H_q)
    \]
    Since the cardinality of $\overline{K}$ is uncountable by assumption, it is greater than the cardinality of $\NN$ which is the index set for our variables in $K[X]$. We can therefore use Hilbert's Nullstellensatz in infinite-dimensional space, see \cite{Lan52}. This shows, just like in the finite dimensional case, that $I(\Delta_p \cup H_q) \subset \sqrt{I}$, so $f \in \sqrt{I}$.

    Consequently $f^r\in I$ for some $r > 0$, and $m = x_1^r\dots x_q^rx_{q+1}^{pr}$ is the only term of $f^r$ with support contained in $\{1,\dots,q+1\}$; all other monomials are divisible by $x_{q+2}$. Hence $I$ satisfies $(\dagger)$ and we can apply Theorem \ref{thm_main} showing that the symmetric Hilbert series of $K[X]/I$ is rational.
\end{proof}

\subsection{Examples and a counterexample} We demonstrate that it is possible to compute the symmetric Hilbert series by hand in simple cases. We also give an example of a finitely generated joint-disjoint closed initial space in $\Lambda$ which has a non-rational Hilbert series. This does not constitute a counterexample to rationality of the symmetric Hilbert series as this space is not the initial space of a symmetric limit.

\begin{example}
Let $I^{(1)} = I(\Delta_1)$ be the vanishing ideal of the diagonal. Thus
\[
I^{(1)} = (x_i-x_j\mid i \neq j).
\]
Clearly $K[X_n]/I_n^{(1)} \cong K[x]$ and so $H\big((K[X_n]/I_n^{(1)})^\Sym;t\big) = \frac{1}{1-t}$ for all $n\geq 1$. So for the limit, we find
\[
H^\Sym(K[X]/I^{(1)};t) = \frac{1}{1-t}
\]
as well.

Let $I^{(2)} = I(H_N)$ for some $N\in \NN$. Then $I^{(2)}\subset K[X]$ is generated by all squarefree monomials $x_{i_0}\dots x_{i_N}$ for $1 \leq i_0 < i_1 <\dots< i_N$. Thus its symmetric limit $I_\infty \subset \Lambda$ is generated by all partitions with $\len(\lambda) \geq N+1$ and we already have $\init(I_\infty) = I_\infty$. A simple counting argument shows
\[
H^\Sym(K[X]/I^{(2)};t) = H(\Lambda/I_\infty; t)  = \prod_{k=1}^{N}\frac{1}{1-t^k}.
\]
\end{example}

\begin{example}
    Fix $\lambda = (2,1)$ and let $V = \Lambda\wedge(\Lambda\vee\{\lambda\})$. This space is generated by a single element, up to joint-disjoint closure, yet the quotient $\Lambda/V$ does not have a rational Hilbert series. Joint-disjoint finite generation is not enough to guarantee a rational Hilbert series.
    
    We have $\str(\lambda) = 1010$. Using the subsequence condition of divisibility, we see that $\lambda \nmid \mu$ if either $\mu$ is the empty partition, or $\str(\mu) = 1^a0^b$ for some $a,b \geq 1$. Such a partition is of form $\mu = (b^a)$ and degree $ab$. Thus $\Lambda/V = \langle\mu\mid \str(\mu) = 1^a0^b, a,b\geq 0 \rangle_K$ and the Hilbert series of the quotient is
    \[
    H(\Lambda/V; t) = 1 + \sum_{a,b = 1}^\infty t^{{ab}} = 1 + \sum_{a=1}^\infty \frac{t^a}{1-t^a}
    \]
    The last part is the well-known Lambert series, which allows the transformation
    \[
    H(\Lambda/V; t) = 1 + \sum_{d=1}^\infty \sigma_0(d)t^d
    \]
    where $\sigma_0(d)$ denotes the number of divisors of $d$.
    This is known not to be a rational function, since the number-of-divisors function does not follow a linear recurrence relation.
\end{example}

\section{Hook Specht Ideals}\label{sec_hook_specht}
We next highlight a natural class of ideals that do not satisfy condition $(\dagger)$, but have a rational symmetric Hilbert series nonetheless. These are the Specht ideals corresponding to hook partitions. This family of ideals is of particular interest, since \cite[Theorem 2]{MRV21} shows that any symmetric ideal $I\subset K[X]$ contains the hook Specht ideals corresponding to $(\infty,1^r)$ for all sufficiently large $r$.

\subsection{The main theorem for hook Specht ideals}
The \emph{Vandermonde matrix} for values $a_0,\dots,a_n$ is
\[
V(a_0,\dots,a_n) = \begin{pmatrix}
1 & a_0 & a_0^2 & \cdots & a_0^n \\
1 & a_1 & a_1^2 & \cdots & a_1^n\\
1 & a_2 & a_2^2 & \cdots & a_2^n \\
\vdots & \vdots & \vdots & \ddots & \vdots \\
1 & a_n & a_n^2 & \cdots & a_n^n \\
\end{pmatrix}.
\]
The \emph{Vandermonde polynomial} is the determinant
\[
\det V(x_1,\dots,x_n) = \prod_{1\leq i<j\leq n} (x_j-x_i).
\]
 
This lets us define
\[
I = \big(\det V(x_{i_0},\dots,x_{i_r})\mid i_0,\dots,i_r \geq 1\big) \subset K[X],
\]

the \emph{hook Specht ideal} corresponding to the infinity partition $(\infty,1^r)$. This name comes from the fact that the truncated ideal $I_n = I\cap K[X_n]$ is the Specht ideal corresponding to the hook partition $\lambda = (n-r,1^r)$ in $K[X_n]$. The vanishing set of $I$ is the set of all sequences in $\bA^\infty_K$ that attain at most $r$ different values in $K$. These ideals do not satisfy condition $(\dagger)$ for $r\geq 2$. 
We will nonetheless prove rationality of their symmetric Hilbert series:

\begin{theorem}\label{thm_hook_specht_main}
    Let $I$ be the hook Specht ideal corresponding to the partition $(\infty,1^r)$. Then the symmetric Hilbert series of $K[X]/I$ is the rational function
    \[
    H^\Sym(K[X]/I;t) = \frac{1}{1-t}\sum_{k=0}^{r-1}\prod_{i=1}^k\frac{t^{2i}}{(1-t^i)(1-t^{i+1}) }.
    \]
\end{theorem}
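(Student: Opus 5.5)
The plan is to compute $H(\Lambda/\init(I_\infty);t)$ directly. Since condition $(\dagger)$ fails, the proof of Theorem~\ref{thm_main} does not apply, but Lemma~\ref{initial_ideal_const_dim} still reduces everything to determining the set of MSFs not in $\init(I_\infty)$. I would determine $\init(I_\infty)$ by two matching bounds: an explicit family of elements of $I_\infty$ gives a lower bound on $\init(I_\infty)$, and an evaluation argument on the zero set gives the matching upper bound.

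\emph{Lower bound.} For distinct indices, $\det V(x_{i_0},\dots,x_{i_r})\in I$. Multiplying by suitable polynomials and applying $\phi_\infty$ produces elements of $I_\infty$; the natural candidates are symmetrizations of $\det V(x_{1},\dots,x_{r+1})\cdot g$ with $g$ chosen so that the result is nonzero after symmetrization (for instance $g$ a second Vandermonde factor, giving the discriminant, or $g$ a monomial). I would compute their initial MSFs in the reverse-length lexicographic order. Since shorter partitions are larger, these initial MSFs should be the shortest-length shapes surviving the alternating sum, for instance of the staircase type $(2r,2r-2,\dots,2)$ and their relatives. Then Proposition~\ref{prop_initial_ideal_closed_cond} lets me close under $\vee$ and $\wedge$, which gives a lower bound $\init(I_\infty)\supseteq \Lambda\vee(\Lambda\wedge\mathcal B)$ for an explicit set $\mathcal B$.

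\emph{Upper bound.} Here I would use geometry. $V(I_n)$ is the set of points of $\bA^n$ with at most $r$ distinct coordinates, and $I_n$ is radical; I would prove this or cite it for hook Specht ideals. Then $K[X_n]^\Sym/I_n^\Sym$ is the algebra of symmetric functions restricted to the points $(a_1^{n_1},\dots,a_k^{n_k})$ with $k\le r$. On such a point a symmetric polynomial becomes a function of the values $a_j$ and the multiplicities $n_j$. As $n\to\infty$ I would treat the multiplicities as additional free parameters, which is where I expect the factor $\frac{1}{1-t}$ and the sum over $k=0,\dots,r-1$ to come from. Concretely, I would show that the MSFs in the proposed complement are linearly independent modulo $I_\infty$. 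I would do this by evaluating on generic points with exactly $k+1$ distinct values and arranging a triangularity argument, using Vandermonde nonvanishing, with respect to the same order $\ge$.

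\emph{Counting.} Once $\init(I_\infty)$ is identified, I would stratify its complement by $k$, playing the role of the number of distinct values minus one, in the spirit of the rank decomposition $\bigoplus R_m$ used for Theorem~\ref{thm_main}. For each $k$, I would exhibit the stratum as a free module over a polynomial ring with generators in degrees $1$, $i$ and $i+1$ for $i=1,\dots,k$, and with a single generator of degree $\sum_{i=1}^k 2i=k(k+1)$. This yields the $k$-th summand $\frac{1}{1-t}\prod_{i=1}^k\frac{t^{2i}}{(1-t^i)(1-t^{i+1})}$. Summing over $k<r$ gives the formula; as a sanity check, the case $r=1$ recovers $\frac{1}{1-t}$ from the diagonal example.

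The main obstacle is the exact identification of $\init(I_\infty)$. The two bounds must match precisely, and since $(\dagger)$ fails, the complement contains infinitely many rank strata. The upper-bound independence argument is the delicate part. I would first verify the cases $r=1,2$ by hand, to pin down the correct $\mathcal B$ and the free-module description of each stratum, before attempting the general triangularity argument.
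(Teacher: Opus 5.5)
Your lower bound is sound, and it is already sharp. The discriminant $\Delta=\prod_{1\le i<j\le r+1}(x_i-x_j)^2$ lies in $I_{r+1}^{\Sym}$ and vanishes whenever two variables are set to $0$. Hence every monomial of $\Delta$ has support of size at least $r$. The monomials avoiding $x_j$ are exactly those of $\prod_{i\neq j}x_i^2$ times the discriminant of the remaining $r$ variables. Among these, the lex-largest shape $(2r,2r-2,\dots,2)$ occurs with total coefficient $(r+1)!>0$. So this staircase is the $\ge$-initial MSF of $\phi_\infty(\Delta)$, and Proposition~\ref{prop_initial_ideal_closed_cond} puts its joint--disjoint closure into $\init(I_\infty)$. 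Since $\str((2r,\dots,2))=(100)^r$, that closure consists exactly of the partitions having parts $\lambda_{a_1},\dots,\lambda_{a_r}$ with $\lambda_{a_i}\ge\lambda_{a_{i+1}}+2$ and $\lambda_{a_r}\ge 2$. These are the partitions divisible by a $<_1$-chain of length $r+1$, which is precisely the initial ideal the paper ends up with. So no ``relatives'' are needed.

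The genuine gap is the matching upper bound, which carries all the content of the theorem. You must show that the chain-free partitions are linearly independent modulo $I_\infty$. ``Evaluate at generic points and arrange a triangularity argument using Vandermonde nonvanishing'' does not do this: you name neither the evaluation data, nor the order in which leading terms are compared, nor why distinct chain-free partitions get distinct leading terms. Since $m_\lambda$ is not multiplicative at finite $n$, you would first need the limiting evaluation $\lambda\mapsto\prod_i\sum_j w_ja_j^{\lambda_i}$ on moment sequences of measures with at most $r$ atoms. What then remains is the statement that the standard monomials of the Hankel determinantal ideal are independent on its zero locus, i.e.\ the Hilbert function of that locus. That is the substantive input the paper takes from \cite{Con98}, not a consequence of Vandermonde nonvanishing.

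The paper gets both inclusions at once, without any evaluation. Lemma~\ref{lemma_fin_trunc_generation}, Proposition~\ref{prop_hook_base_case_gen} (products of alternants are minors of $V_{r+1}^TV_{r+1}$) and Corollary~\ref{cor_hook_gen} identify $I_\infty$ with the $(\Lambda,\vee)$-ideal of $\rminor$ minors of the Hankel matrix in $p_k=(k)$ with $p_0=1$. Proposition~\ref{prop_groebner_basis} then imports Conca's Gröbner basis to read off the standard set; note this uses the order $>_L$, which differs from your $\ge$.

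Your counting step has a smaller gap of the same kind. The strata must be defined combinatorially; the paper uses $P_k$, graded by the longest $<_1$-chain. The free-module structure you assert, with generators in degrees $1,i,i+1$, is never constructed, and the paper remarks that none is evident. The paper instead obtains each summand by parametrizing $P_k$ explicitly and telescoping with \eqref{equ_comb_identity} in Proposition~\ref{prop_induction_formula}.
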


As a consequence, we obtain a polynomial bound on the growth of the coefficients of all symmetric Hilbert series.

\begin{corollary}\label{cor_sym_polynomial_growth}
    Let $I\subset K[X]$ be any nonzero homogeneous symmetric ideal. Then the corresponding symmetric dimensions have at most polynomial growth. Precisely
    \[
    \SD(K[X]/I,d) = \mathcal{O}(d^{2r-1})
    \]
    where $r$ is the minimal degree of polynomials appearing in $I$.
\end{corollary}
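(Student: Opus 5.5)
The plan is to reduce the general ideal to a hook Specht ideal and then read the growth rate off the explicit rational function in Theorem~\ref{thm_hook_specht_main}. There are two things to pin down: which hook Specht ideal sits inside $I$, and why containment gives monotonicity of symmetric dimensions.

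\emph{Monotonicity.} If $J\subset I$ are homogeneous symmetric ideals, then $J_n^\Sym\subset I_n^\Sym$ for every $n$. Hence
\[
\dim_K\bigl[K[X_n]^\Sym/I_n^\Sym\bigr]_d\le\dim_K\bigl[K[X_n]^\Sym/J_n^\Sym\bigr]_d,
\]
and taking limits gives $\SD(K[X]/I,d)\le\SD(K[X]/J,d)$.

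\emph{Finding the hook Specht ideal.} I aim to show that a nonzero symmetric ideal whose minimal degree is $r$ contains the hook Specht ideal $J^{(r)}$ for $(\infty,1^r)$, generated by the Vandermonde determinants $\det V(x_{i_0},\dots,x_{i_r})$ of degree $\binom{r+1}{2}$. The paper cites \cite[Theorem 2]{MRV21}, which gives containment only for sufficiently large $r$, so I need a direct argument.

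Take a nonzero homogeneous $f\in I$ of degree $r$, involving variables $x_1,\dots,x_N$. Pick fresh variables $y_0,\dots,y_r$ disjoint from them, and apply the antisymmetrizer over permutations of these together with suitably chosen variables of $f$. Concretely, I would consider the $\Sym(r+1)$-alternant
\[
\sum_{\sigma}\operatorname{sgn}(\sigma)\,\sigma\cdot\bigl(f\cdot g\bigr),
\]
where $g$ is a monomial chosen so that the product has a staircase-shaped term in $r+1$ distinguished variables. Every alternating polynomial in those variables is divisible by their Vandermonde, so this produces elements of the form $\det V\cdot h$. The aim is to arrange that $h$ is nonzero and then to remove $h$.

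The cleaner route is to look at $K[X]/I$. Its degree-$r$ part is a proper quotient, so for large $n$ some sign-twisted or hook component is killed. Using that the Specht modules occurring in degree $\le r$ are of the form $(n-k,\mu)$ with $|\mu|\le r$, together with the standard fact that the Specht ideal of $(n-r,1^r)$ is generated by Vandermondes in $r+1$ variables, one obtains $J^{(r')}\subset I$ for some $r'\le r$. Note that $r'\le r$ suffices, since the bound only improves.

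This containment step is the main obstacle, and I may ultimately invoke \cite{MRV21} with its explicit bound if it gives $r'\le r$.

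\emph{Growth of the hook Specht series.} In the rational function of Theorem~\ref{thm_hook_specht_main}, the $k$-th summand has denominator $(1-t)\prod_{i=1}^k(1-t^i)(1-t^{i+1})$. Every factor has a pole at $t=1$ of order one, so the summand has a pole at $t=1$ of order $2k+1$. All other poles lie at roots of unity, and at each such root the pole order is no larger than at $t=1$.

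By the standard partial-fraction estimate, the coefficients of the $k$-th summand are therefore $\mathcal O(d^{2k})$. The largest term is $k=r'-1$, giving $\mathcal O(d^{2r'-2})\subset\mathcal O(d^{2r-1})$. Combining this with the monotonicity step proves $\SD(K[X]/I,d)=\mathcal O(d^{2r-1})$.
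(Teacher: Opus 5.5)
Your architecture is the paper's: monotonicity of $\SD$ under inclusion of symmetric ideals, containment of a hook Specht ideal in $I$, and reading the growth off Theorem~\ref{thm_hook_specht_main}. The paper does not prove the containment step you flag as the obstacle either. Its proof invokes \cite[Theorem 2]{MRV21} in exactly the form you need. If $f\in I$ is nonzero homogeneous of minimal degree $r$ and $\ell$ is minimal with $f\in K[X_\ell]$, then $J_n\subseteq I_n$ for all $n\ge r+\ell$, where $J$ is the hook Specht ideal of $(\infty,1^r)$ for this same $r$ (hence $J\subseteq I$). The phrase ``for all sufficiently large $r$'' at the start of Section~\ref{sec_hook_specht} is only a threshold statement. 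A Vandermonde determinant in $r+2$ variables is a multiple of the one in $r+1$ of them, so the hook Specht ideals decrease in $r$, and the threshold is the minimal degree. Your fallback of citing \cite{MRV21} is therefore precisely the paper's proof.

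You should not rely on your two self-contained sketches, since neither works as stated. Antisymmetrizing $f\cdot g$ yields elements $\det V\cdot h\in I$. An ideal is not closed under division, and nothing in the sketch lets you ``remove $h$''; getting the Vandermonde itself into $I$ is the real content of the cited theorem. The representation-theoretic route gives no mechanism at all. The Specht polynomials of shape $(n-r,1^r)$ have degree $\binom{r+1}{2}>r$ for $r\ge 2$. Moreover, $[I_n]_r$ need not contain any hook constituent. For $I$ generated by the orbit of $(x_1-x_2)(x_3-x_4)$ one has $[I_n]_2\cong V^{(n-2,2)}$, even though $I$ does contain the hook Specht ideal of $(\infty,1^2)$.

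Your pole count, on the other hand, is right and slightly sharper than the paper's. The summand $k=r-1$ has a pole of order exactly $2r-1$ at $t=1$, and there is no pole of larger order elsewhere. So you actually get $\SD(K[X]/I,d)=\mathcal{O}(d^{2r-2})$, which implies the stated $\mathcal{O}(d^{2r-1})$.
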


\begin{proof}
    Let $I\subset K[X]$ be any nonzero homogeneous symmetric ideal. Let $f\in I$ be a nonzero homogeneous polynomial of minimal degree $r := \deg(f)$. Denote by $\ell$ the smallest number such that $f\in K[X_\ell]$.
    
    Let $J\subset K[X]$ be Specht ideal corresponding to the hook partition $(\infty,1^{r})$. By \cite[Theorem 2]{MRV21}, $J_n \subset I_n$ for all $n \geq r+\ell$. In particular
    \[
    [J_n^\Sym]_d \subset [I_n^\Sym]_d\quad\textnormal{ if } n\geq r+\ell
    \]
    for all degrees $d$.
    Thus comparing dimensions of the quotients of $K[X_n]^\Sym$ with the above ideals for $n\rar \infty$,
    \[
    \SD(K[X]/I, d) \leq \SD(K[X]/J,d).
    \]
    Finally, by the explicit formula of Theorem \ref{thm_hook_specht_main}, $\SD(K[X]/J,d) = \mathcal{O}(d^{2r-1})$.
\end{proof}

\subsection{The space of Hankel matrices}
Recall that a Hankel matrix is a matrix with constant counter diagonals.
Given a Vandermonde matrix with variables as entries, we consider its normalized product with its transpose
\[
\frac{1}{n}V(x_1,\dots,x_n)^TV(x_1, \dots,x_n) = \begin{pmatrix}
    1 & p_1 & \cdots & p_{n-1}\\
    p_1 & p_2 & \cdots & p_{n}\\
    \vdots & \vdots & \ddots & \vdots \\
    p_{n-1} & p_{n} & \cdots & p_{2n-1}
    \end{pmatrix}.
\]
The resulting matrix is a Hankel matrix in the power sums $p_k = \frac{1}{n}(x_1^k+\dots+x_n^k)$. Generalizing this idea, we let $V_n$ be the matrix with $n$ rows and infinitely many columns such that $\big(V_n\big)_{i,j} = x_i^{j-1}$. It is an infinite, non-square, Vandermonde matrix. Then
\[
A_n = \frac{1}{n}V_n^TV_n
\]
is the infinite Hankel matrix with entries $\big(A_n\big)_{i,j} = p_{i+j-2}$ in $K[X_n]^\Sym$. We denote its limit for $n\rar \infty$ with entries in $\Lambda$ by $A_\infty$. Note that $p_k$ is the monomial symmetric polynomial corresponding to the length one partition $(k)$, thus its limit is exactly the MSF $(k)$.

We fix $I\subset K[X]$ to be the hook Specht ideal corresponding to the partition $(\infty,1^r)$ and denote $I_n^\Sym = I\cap K[X_n]^\Sym$ and $I_\infty$ as its symmetric limit. We want to show that the $\rminor$ minors of $A_\infty$ generate the symmetric limit $I_\infty$ as a $(\Lambda,\vee)$-ideal. This will let us compute its Hilbert series.

It is a theorem of Kronecker that an infinite Hankel matrix has finite rank if its entries are given by a univariate rational function. A precise statement where the rank of the Hankel matrix is given depending on the degree of the rational function can be found in section 4 of \cite{Pel98}. This lets us see that all $\rminor$ minors of $A_n$ vanish on a point in $\bA^n_K$ if its coordinates attain at most $r$ distinct values. This coincides with the vanishing set of $I_n^\Sym$. Leading this idea to its conclusion requires working over an algebraically closed field, however. We will use more algebraic methods for a general proof.

\subsection{The symmetric limit of Hook Specht ideals}
Recall that $\phi_n$ denotes the partial symmetrization from Definition \ref{def_part_sym} and $\phi_\infty$ denotes the total symmetrization.

\begin{lemma}\label{lemma_fin_trunc_generation}
    Let $I \subset K[X]$ be any homogeneous symmetric ideal and let $k$ be any integer such that the $\Sym(\infty)$-orbits $\Sym(\infty)\cdot I_k$ generate $I$ as a $K[X]$-ideal. Then $\phi_\infty(I_{k}^\Sym)$ generates $I_\infty$ as a $(\Lambda,\vee)$-ideal.
\end{lemma}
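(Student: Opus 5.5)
The plan is to prove both inclusions, with the inclusion of $I_\infty$ in the disjoint ideal doing the real work.

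\emph{The easy inclusion.} First, $\phi_\infty(I_k^\Sym)\subset I_\infty$ by definition of the direct limit. Next, $I_\infty$ is closed under disjoint multiplication by any partition $\mu$. This is exactly the first half of the proof of Proposition~\ref{prop_initial_ideal_closed_cond}, which did not use initial terms. Take a lift $f_{(n)}\in I_n^\Sym$ and multiply by $x_{n+1}^{\mu_1}\cdots x_{n+k}^{\mu_k}$. Symmetrizing with $\phi_{n+k}$ gives a preimage of $\mu\vee f$ in $I_{n+k}^\Sym$. Hence the $(\Lambda,\vee)$-ideal generated by $\phi_\infty(I_k^\Sym)$ lies in $I_\infty$.

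\emph{The key identity.} For the reverse inclusion I will use the following claim. Let $g\in K[X_k]$ and let $m$ be a monomial in the variables $x_{k+1},x_{k+2},\dots$ of shape $\mu$. Then
\[
\phi_\infty(g\, m) = \phi_\infty(g)\vee \mu .
\]
To see this, note that $m$ is fixed by $\Sym(k)\subset\Sym(N)$. By Lemma~\ref{lemma_part_sym}(2),(3) we get $\phi_N(g m)=\phi_N(\phi_k(g)\, m)$. Write $\phi_k(g)=\sum_\lambda c_\lambda m_\lambda(x_1,\dots,x_k)$. Every monomial of $m_\lambda(x_1,\dots,x_k)\cdot m$ has shape $\lambda\vee\mu$, because the supports are disjoint. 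Also, $\phi_N$ sends any single monomial of shape $\nu$ to the normalized $m_\nu$. So $\phi_N(m_\lambda(x_1,\dots,x_k)\, m)=m_{\lambda\vee\mu}$, and passing to the limit gives the claim.

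\emph{The reduction.} Let $f\in I_n^\Sym$ represent an arbitrary element of $I_\infty$. By hypothesis we may write
\[
f=\sum_j c_j\, u_j\, \sigma_j(h_j),
\]
with $h_j\in I_k$, $\sigma_j\in\Sym(\infty)$ and monomials $u_j$. Choose $N$ larger than all indices involved. Since $f$ is $\Sym(n)$-invariant, its class in $\Lambda$ is $\phi_\infty(f)=\sum_j c_j\phi_\infty(u_j\sigma_j(h_j))$. The map $\phi_\infty$ is invariant under precomposition with permutations, so each term equals $\phi_\infty(u'_j h_j)$ with $u'_j=\sigma_j^{-1}(u_j)$. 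Split $u'_j=v_j w_j$, with $v_j$ in $x_1,\dots,x_k$ and $w_j$ in the remaining variables. Then $g_j:=v_jh_j\in I_k$, and the key identity gives
\[
\phi_\infty(u'_jh_j)=\phi_\infty(g_j)\vee\shape(w_j)=\phi_\infty(\phi_k(g_j))\vee \shape(w_j).
\]
Here $\phi_k(g_j)\in I_k^\Sym$ because $I_k$ is $\Sym(k)$-stable. This exhibits $\phi_\infty(f)$ in the $(\Lambda,\vee)$-ideal generated by $\phi_\infty(I_k^\Sym)$.

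\emph{Expected obstacle.} The main care point is the key identity: keeping track of the normalizations of the $m_\lambda$ under $\phi_N$, and checking that symmetrizing first over $\Sym(k)$ does not change $\phi_\infty$. The rest is bookkeeping with the direct limit.
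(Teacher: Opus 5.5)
Your proof is correct and follows the paper's own argument. You use $\Sym(\infty)$-invariance of $\phi_\infty$ to move the permutation off the generator, absorb the part of the monomial in $x_1,\dots,x_k$ into the element of $I_k$, and apply the identity $\phi_\infty(gm)=\phi_\infty(\phi_k(g))\vee\shape(m)$ for monomials $m$ supported away from the first $k$ variables. Beyond what the paper writes, you also check the easy inclusion and the normalization behind that identity, both of which the paper leaves implicit.
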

\begin{proof}
    By assumption, polynomials of the form $\sigma (fm)$ for $f\in I_k$, $\sigma \in \Sym(\infty)$ and $m = x_{1}^{a_{1}}\dots x_{n}^{a_{n}}$ linearly generate $I$. We may assume that $a_1,\dots,a_k = 0$ without loss of generality by absorbing variables $x_1,\dots,x_k$ into $f\in I_k$. Partial symmetrization of truncations gives $\phi_n(I_n) = I_n^\Sym$. Thus by $I_n \subset I$ for all $n\geq 1$ and the properties of the direct limit, we get $\phi_\infty(I) = I_\infty$. Since total symmetrization $\phi_\infty$ is linear, we can complete the proof by checking that $\phi_\infty(\sigma(fm))$ lies in the $(\Lambda,\vee)$-ideal generated by $\phi_\infty(I_{k}^\Sym)$.

    Since total symmetrization is $\Sym(\infty)$-invariant, $\phi_\infty(\sigma(fm)) = \phi_\infty(fm)$. The support of $m$ is disjoint from the first $k$ variables, thus $m$ is invariant under $\Sym(k)$-action. We can write $\phi_k(fm) = \phi_k(f)m$ by Lemma \ref{lemma_part_sym} and thus $\phi_\infty(fm) = \phi_\infty(\phi_k(f)m)$. Let $\lambda$ be the shape of $m$. Again, since the support of $m$ is disjoint from $\{1,\dots,k\}$, we have
    \[
    \phi_\infty(\phi_k(f)m) = \phi_\infty(\underbrace{\phi_k(f)}_{\in I^\Sym_k})\vee \lambda.
    \]
    This lies in the $(\Lambda,\vee)$-ideal generated by $\phi_\infty(I_k^\Sym)$ as required. 
\end{proof}

\begin{proposition}\label{prop_hook_base_case_gen}
    Let $I$ be the hook Specht ideal corresponding to $(\infty, 1^r)$. Then $I_{r+1}^\Sym$ is linearly generated by the $\rminor$ minors of the Hankel matrix $A_{r+1}$.
\end{proposition}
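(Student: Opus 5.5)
First identify $I_{r+1}$ exactly. In $K[X_{r+1}]$ the only generator of $I$ that survives is the Vandermonde $\Delta:=\det V(x_1,\dots,x_{r+1})$. Any generator with a repeated index is zero, and every other generator involves some variable outside $X_{r+1}$.

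To see that $I_{r+1}=(\Delta)$, specialize every variable $x_j$ with $j>r+1$ to a variable already in $X_{r+1}$. This is a ring map $K[X]\to K[X_{r+1}]$ that fixes $K[X_{r+1}]$. It sends each generator either to $\pm\Delta$ or to $0$. So any element of $I_{r+1}$ lies in $(\Delta)$, and therefore $I_{r+1}=\Delta\cdot K[X_{r+1}]$.

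Next take invariants. Since $\Delta$ is alternating, $\Delta g$ is symmetric exactly when $g$ is alternating. Every alternating polynomial has the form $\Delta h$ with $h$ symmetric. Hence
\[
I_{r+1}^\Sym=\Delta^2\cdot K[X_{r+1}]^\Sym .
\]

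Now compare with the Hankel minors. By Cauchy--Binet applied to $A_{r+1}=\frac1{r+1}V_{r+1}^TV_{r+1}$, the minor with rows $R$ and columns $C$ (with $|R|=|C|=r+1$) equals
\[
(r+1)^{-(r+1)}\det(V_{r+1})_{\ast,R}\,\det(V_{r+1})_{\ast,C},
\]
where $(V_{r+1})_{\ast,R}$ is the $(r+1)\times(r+1)$ submatrix of $V_{r+1}$ with column set $R$. There is no sum here, because $V_{r+1}$ has only $r+1$ rows. Each factor is a generalized Vandermonde determinant. It equals $\Delta\cdot s_{\alpha}(x_1,\dots,x_{r+1})$, a Schur polynomial in $r+1$ variables with $\alpha$ determined by the exponent set. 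So every minor lies in $\Delta^2K[X_{r+1}]^\Sym$, which is one inclusion.

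For the reverse inclusion I need the products $s_\alpha s_\beta$, for $\alpha,\beta$ ranging over partitions with at most $r+1$ parts, to span $K[X_{r+1}]^\Sym$ linearly. Taking $\beta=\emptyset$ (column set $\{1,\dots,r+1\}$), the products $s_\alpha\cdot 1$ already run over all Schur polynomials with at most $r+1$ parts. Every exponent set of size $r+1$ occurs as a row index set of the infinite matrix. These Schur polynomials form a basis of $K[X_{r+1}]^\Sym$. So the minors with $C=\{1,\dots,r+1\}$ alone span $\Delta^2K[X_{r+1}]^\Sym$.

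The step I expect to be the main obstacle is interpreting ``linearly generated'' correctly. I take it to mean the $K$-span of the (infinitely many) $(r+1)\times(r+1)$ minors of the infinite matrix $A_{r+1}$, and that reading is what the argument above establishes.

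The remaining technical point is $I_{r+1}=(\Delta)$. The specialization argument handles it. The map is well defined because $I$ is generated by Vandermondes in arbitrary index tuples, and a specialization of a Vandermonde is again a Vandermonde, possibly with repeated entries and hence zero.
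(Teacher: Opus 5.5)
Your proof is correct and takes essentially the paper's route: Cauchy--Binet factors each minor of $A_{r+1}$ into two generalized Vandermonde determinants, and Jacobi's bialternant formula turns these into $\Delta$ times Schur polynomials, so the minors span $\Delta\cdot(\text{alternating polynomials}) = \Delta^2 K[X_{r+1}]^{\Sym} = I_{r+1}^{\Sym}$. Your specialization argument for $I_{r+1}=(\Delta)$, and your observation that the minors with column set $\{1,\dots,r+1\}$ already suffice for spanning, make explicit two points the paper leaves implicit.
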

\begin{proof}
    We write $a(i_1,\dots,i_{r+1})$ for the $\rminor$ minor of $V_{r+1}$ of columns $i_1<\dots<i_{r+1}$. It is an alternating polynomial.
    We further write $v = a(1,2,\dots,r+1)$ for the Vandermonde polynomial in the first $r+1$ variables. Basic theory of alternating polynomials
    shows that the polynomials $a(i_1,\dots,i_{r+1})$ linearly generate the space of alternating polynomials in $r+1$ variables and are all divisible by $v$. The quotients are the Schur polynomials as given by Jacobi's bialternant formula.

    Thus, combining these results, we see that the polynomials $a(i_1,\dots,i_{r+1})\cdot a(j_1,\dots,j_{r+1})$ linearly generate the $K$-space $I^\Sym_{r+1}$. This product of two minors is the same as the minor of rows $i_1,\dots,i_{r+1}$ and columns $j_1,\dots,j_{r+1}$ of the product $V_{r+1}^T V_{r+1} = (r+1)\cdot A_{r+1}$.

    Hence the minors of the Hankel matrix $A_{r+1}$ indeed linearly generate $I_{r+1}^\Sym$.
\end{proof}
    
\begin{corollary}\label{cor_hook_gen}
    Let $I$ be the hook Specht ideal corresponding to $(\infty, 1^r)$. Then its symmetric limit $I_\infty$ is generated as a $(\Lambda,\vee)$-ideal by the $\rminor$ minors of the infinite Hankel matrix $A_\infty$. 
\end{corollary}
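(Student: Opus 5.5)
The plan is to combine Lemma~\ref{lemma_fin_trunc_generation} with Proposition~\ref{prop_hook_base_case_gen}. Concretely: show that $I_{r+1}$ generates $I$ up to symmetry, so the lemma applies with $k=r+1$; then show that total symmetrization sends the Hankel minors of $A_{r+1}$ to the corresponding minors of $A_\infty$.

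\textbf{Step 1: applying the lemma with $k=r+1$.} By definition $I$ is generated by the Vandermonde polynomials $\det V(x_{i_0},\dots,x_{i_r})$. Each of these is a $\Sym(\infty)$-translate of $\det V(x_1,\dots,x_{r+1})$, up to sign. We may take the indices distinct, since otherwise the generator vanishes. So the orbit $\Sym(\infty)\cdot I_{r+1}$ generates $I$, and Lemma~\ref{lemma_fin_trunc_generation} gives that $\phi_\infty(I_{r+1}^\Sym)$ generates $I_\infty$ as a $(\Lambda,\vee)$-ideal. By Proposition~\ref{prop_hook_base_case_gen}, $I_{r+1}^\Sym$ is spanned by the $(r+1)\times(r+1)$ minors of $A_{r+1}$. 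Hence $I_\infty$ is generated by the images of these minors under $\phi_\infty$, and it remains to identify those images.

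\textbf{Step 2: the images of the minors.} A minor of $A_{r+1}$ is a polynomial in the power sums $p_k=m_{(k)}$ in $r+1$ variables. Since $\phi_\infty$ is not multiplicative, its image is not automatically the same expression in the MSFs $(k)$. To handle this, I would not take products of normalized power sums directly. Instead I would use the factorization from the proof of Proposition~\ref{prop_hook_base_case_gen}: each minor is a product of two alternants $a(I)\,a(J)$, summed via Cauchy--Binet over row sets of $V_{r+1}$, which here are all $r+1$ rows.

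The cleaner route is to compare with the minor of $A_n$ directly. Cauchy--Binet gives
\[
\det\big(V_n^TV_n\big)_{I,J}=\sum_{S}a_S(I)\,a_S(J),
\]
where $S$ runs over $(r+1)$-subsets of $\{1,\dots,n\}$ and $a_S$ denotes the alternant in the variables indexed by $S$. Each summand is a $\Sym(n)$-translate of $a(I)a(J)$ in the first $r+1$ variables. Therefore
\[
\det\big((A_n)_{I,J}\big)=\frac{\binom{n}{r+1}}{n^{r+1}}\;\phi_n\big(a(I)a(J)\big)\cdot c,
\]
with $c$ a fixed constant. Since the scalar factor does not matter for generation, the $\Sym(n)$-symmetrization of $a(I)a(J)$ is, up to a nonzero scalar, the minor of $A_n$. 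Passing to the limit requires care on two points. First, the coefficients of $\det(A_n)_{I,J}$ in the MSF basis must converge to those of the minor of $A_\infty$, where "minor of $A_\infty$" means the determinant expanded with the disjoint product $\vee$ as multiplication. Second, the scalar $\binom{n}{r+1}/n^{r+1}$ must be accounted for.

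\textbf{Main obstacle.} The main difficulty is this normalization and limit bookkeeping. In the expansion of $\det(A_n)_{I,J}$, which is a product of $r+1$ normalized power sums $p_{k_1}\cdots p_{k_{r+1}}$, the terms with pairwise distinct variables dominate as $n\to\infty$. These terms give exactly the disjoint product $(k_1)\vee\cdots\vee(k_{r+1})$, with coefficient $\prod(1-i/n)\to 1$. Terms in which variables coincide carry an extra factor $1/n$ and vanish. However, the Cauchy--Binet computation shows that the coincident-variable terms cancel exactly in the determinant, because each $a_S$ uses distinct variables. So I expect the limit to be precisely the $\vee$-determinant. Verifying this cancellation carefully, and checking that the two descriptions of $\phi_\infty(a(I)a(J))$ agree up to a scalar, is the step I would spend the most effort on. Once it is established, the corollary follows immediately from Step 1.
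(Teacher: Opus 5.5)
Your proposal is correct. Step 1 is exactly the paper's reduction: Lemma~\ref{lemma_fin_trunc_generation} with $k=r+1$, followed by Proposition~\ref{prop_hook_base_case_gen}. The difference lies in how you identify $\phi_\infty$ of a minor. Below I write $\mathbf i,\mathbf j$ for your row and column sets and $D$ for the $\vee$-minor of $A_\infty$.

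The paper stays in $r+1$ variables. It expands the minor of $A_{r+1}$ by Leibniz into products of normalized power sums. It then argues, via divisibility by $x_1\cdots x_{r+1}$, that all terms in which two factors land on the same variable cancel, leaving
\[
\frac{(r+1)!}{(r+1)^{r+1}}\sum_{\sigma}\textnormal{sgn}(\sigma)\,\phi_{r+1}\Bigl(\prod_k x_k^{i_k+j_{\sigma(k)}-2}\Bigr),
\]
whose image in $\Lambda$ is visibly a multiple of $D$. That divisibility argument only covers $i_1+j_1>2$; the case $i_1=j_1=1$ is merely asserted.

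Your Cauchy--Binet route treats all index sets uniformly. It also gives a sharper identity, valid for every $n\ge r+1$:
\[
\det(A_n)_{\mathbf i,\mathbf j}=\prod_{q=0}^{r}\Bigl(1-\tfrac{q}{n}\Bigr)\,D^{(n)},
\]
where $D^{(n)}$ is the $\vee$-minor written in $n$-variable MSFs. So the Hankel minors of every $A_n$ are compatible with the direct system. The price is the limit bookkeeping.

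That bookkeeping is not a real obstacle, and you do not need to prove the exact cancellation separately. The MSF coordinates of $\phi_n\bigl(a(\mathbf i)a(\mathbf j)\bigr)$ are independent of $n$, since it represents one fixed element of the direct system. Those of $\frac{n^{r+1}}{\binom{n}{r+1}}\det(A_n)_{\mathbf i,\mathbf j}$ converge to $(r+1)!$ times those of $D$, by your injective/non-injective count. Equality for every $n$ then forces $\phi_\infty\bigl(a(\mathbf i)a(\mathbf j)\bigr)=(r+1)!\,D$.

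More directly still: for $n=r+1$, Cauchy--Binet is just $\det(V_{r+1}^TV_{r+1})_{\mathbf i,\mathbf j}=a(\mathbf i)a(\mathbf j)$. Multiplying the two Leibniz expansions gives each variable exactly one exponent from each alternant, so
\[
a(\mathbf i)a(\mathbf j)=(r+1)!\sum_\rho\textnormal{sgn}(\rho)\,\phi_{r+1}\Bigl(\prod_k x_k^{i_k+j_{\rho(k)}-2}\Bigr).
\]
This is the paper's formula, now justified also for $i_1=j_1=1$. Incidentally, your constant $c$ equals $1$.
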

\begin{proof}
    By definition of the hook Specht ideal, $\Sym(\infty)\cdot I_{r+1}$ generates all of $I$ as a $K[X]$-ideal. Thus we can apply Lemma \ref{lemma_fin_trunc_generation} and find that $\phi_\infty (I_{r+1}^\Sym)$ generates $I_\infty$ as a $(\Lambda,\vee)$-ideal. Now Proposition \ref{prop_hook_base_case_gen} gives us exactly that $I_{r+1}^\Sym$ is linearly generated by the $\rminor$ minors of $A_{r+1}$.

    Let $m$ be such a minor of rows $i_1,\dots,i_{r+1}$ and columns $j_1,\dots,j_{r+1}$. Then it is divisible by $x_1^{i_1+j_1-2}$. Since $m$ is a symmetric polynomial, it is therefore divisible by the monomial $x_1\dots x_{r+1}$ if either $i_1 > 1$ or $j_1 > 1$. In particular, all terms that are not divisible by $x_1\dots x_{r+1}$ must cancel out when computing the determinant
    \[
    m = \begin{vmatrix}
    p_{i_1+j_1-2} & \cdots & p_{i_1 + j_{r+1}-2}\\
    \vdots & \ddots & \vdots \\
    p_{i_{r+1}+j_1-2} & \cdots & p_{i_{r+1}+j_{r+1}-2}
    \end{vmatrix}
    \]
    and replacing $p_k$ by $\frac{1}{r+1}(x_1^k+\dots+x_{r+1}^k)$ in the resulting expression.
    We apply this to the Leibniz formula
    \[
    m = \sum_{\sigma\in\Sym(r+1)} \textnormal{sgn}(\sigma) \prod _{k=1}^{r+1} p_{i_k+j_{\sigma(k)}-2}
    \]
    to simplify the products, as we only need to consider terms divisible by $x_1\dots x_{r+1}$.
    Thus we can write $m$ in terms of $x_1,\dots,x_{r+1}$ as
    \[
    m = \frac{(r+1)!}{(r+1)^{r+1}}\sum_{\sigma \in \Sym(r+1)} \textnormal{sgn}(\sigma) \phi_{r+1}\Big(\prod_{k=1}^{r+1} x_k^{i_k+j_{\sigma(k)}-2}\Big).
    \]
    Note that this expression still holds for $i_1 = j_1 = 1$.
    The limit of this expression in $\Lambda$ is the minor of $A_\infty$ under disjoint product up to constant factor $\frac{(r+1)!}{(r+1)^{r+1}}$. Thus $I_\infty$ is indeed generated by the $\rminor$ minors of $A_\infty$.
\end{proof}

\subsection{Computing the Hilbert series}

We managed to describe the symmetric limit of hook Specht ideals as a determinantal ideal of an infinite Hankel matrix. This is highly useful as determinantal ideals of Hankel matrices are well understood. In particular, a Gröbner basis for such an ideal was constructed in \cite{Con98}. This Gröbner basis generalizes to the case of infinitely many variables without difficulty and lets us describe the Hilbert series of $\Lambda/I_\infty$ by counting monomials in $\Lambda/\init(I_\infty)$. This is an infinite sum of rational functions, but with substantial effort, it is possible to simplify this to the expression seen in Theorem \ref{thm_hook_specht_main}.

We adopt the notation from \cite{Con98}.

\begin{definition}
    Let $<_1$ be the strict partial order on $\NN$ defined by
    \[
    i <_1 j \quad\textnormal{if and only if} \quad i + 1 < j
    \]
    and $>_L$ be the reverse-length lexicographical monomial order on $(\Lambda,\vee)$ seen as a polynomial ring in $p_1,p_2,p_3,\dots$ induced by 
    $p_1>p_2>p_3>\dots$.
    
    A sequence of numbers $s = (s_1,s_2,\dots,s_k)$ is called a \emph{$<_1$-chain} of length $k$ if $s_1 >_1 s_2 >_1 \dots >_1 s_k$.
    We say that a partition or MSF $\lambda$ is \emph{divisible} by a $<_1$-chain of length $k$ if it contains a subsequence of length $k$ that is a $<_1$-chain. Note that this includes the trailing zeroes of every partition we usually suppress in our notation.
\end{definition}

\begin{proposition}\label{prop_groebner_basis}
    The $\rminor$ minors of $A_\infty$ form a Gröbner basis for $I_\infty$ with respect to $>_L$.
\end{proposition}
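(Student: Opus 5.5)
The plan is to reduce the statement to Conca's theorem \cite{Con98} on Hankel determinantal ideals, via a compatibility argument between finitely many power sums and the infinite setting. The key observation is that $(\Lambda,\vee)$ is a polynomial ring in the MSFs $(k)=p_k$, $k\ge 1$: a partition $\lambda=(\lambda_1,\dots,\lambda_\ell)$ is exactly the disjoint product $(\lambda_1)\vee\cdots\vee(\lambda_\ell)$, and $\vee$ is commutative and associative. Under this identification $A_\infty$ is the generic infinite Hankel matrix with entries $p_{i+j-2}$ (with $p_0=1$), and $>_L$ is a genuine monomial order on this polynomial ring. By Corollary~\ref{cor_hook_gen}, $I_\infty$ is the ideal of $(\Lambda,\vee)$ generated by the $(r+1)\times(r+1)$ minors of $A_\infty$. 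So the claim is purely about determinantal ideals of a Hankel matrix in a polynomial ring with countably many variables.

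\textbf{Truncation.} For each $N$, let $H_N$ be the finite Hankel matrix $(p_{i+j-2})_{1\le i,j\le N}$, which involves only $p_1,\dots,p_{2N-2}$. Every minor of $A_\infty$ is a minor of some $H_N$. Conca's result gives the Gröbner basis for the ideal of $(r+1)$-minors of a generic Hankel matrix (a Hankel matrix with a constant entry $p_0=1$ can be handled by dehomogenizing a generic Hankel matrix with top-left entry $p_0$, and this compatibility must be checked). It says these minors form a Gröbner basis with respect to a suitable lex/revlex order, and that their leading terms are exactly the monomials containing a $<_1$-chain of length $r+1$ among the indices, counting $0$'s. I would check that $>_L$ restricted to $K[p_1,\dots,p_{2N-2}]$ agrees with the order used in \cite{Con98}, or at least induces the same initial term for every minor. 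The leading term should be the product along the antidiagonal, or the diagonal, depending on convention. The definition of $<_1$-chains is modelled on exactly this leading term.

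\textbf{Passage to the limit.} Let $f\in I_\infty$. Then $f$ is a finite combination $\sum g_k M_k$ of minors $M_k$, so it lies in the ideal $J_N$ of $(r+1)$-minors of $H_N$ inside $K[p_1,\dots,p_{2N-2}]$ for $N$ large. Write $J'_N$ for the ideal generated by these minors in the full polynomial ring $K[p_1,p_2,\dots]$. Since $K[p_1,p_2,\dots]$ is free over $K[p_1,\dots,p_{2N-2}]$, every element of $J'_N$ that lies in the smaller ring already lies in $J_N$. Moreover $J_N$ does not depend on how many extra variables are adjoined. Hence $\init(f)$ is divisible by the leading term of some minor of $H_N$. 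This gives the Gröbner basis property for the union $I_\infty=\bigcup_N J'_N$.

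\textbf{Main obstacle.} The difficult step is matching $>_L$ with Conca's order. Here $>_L$ is reverse-length lex with $p_1>p_2>\cdots$, and the constant entry $p_0=1$ breaks homogeneity in the $p$-grading, although the ideal stays homogeneous in the $\Lambda$-degree. If the orders do not agree directly, my fallback is to show directly that the leading term of each minor under $>_L$ is the diagonal-type term with indices forming a $<_1$-chain. I would then verify Buchberger's criterion through Conca's combinatorial argument, which is a standard-monomial count. The count compares the monomials not divisible by a $<_1$-chain of length $r+1$ with the Hilbert function of $K[p]/J_N$, which is known from \cite{Con98}. Equality of the two counts degree by degree forces the Gröbner property.
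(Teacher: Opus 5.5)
Your reduction is the paper's route: view $(\Lambda,\vee)$ as $K[p_1,p_2,\dots]$, regard the minors of $A_\infty$ as images of generic Hankel minors in $K[y_0,\dots,y_n]$ under $\psi\colon y_i\mapsto p_i$, $y_0\mapsto 1$, and import Conca's Gröbner basis. There is a genuine gap, though. The step you call the main obstacle is the entire content of the proof, and you leave it open. Moreover, the check you propose for it fails as stated.

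The order $>_L$ on $K[p_1,\dots,p_{2N-2}]$ does not agree with Conca's order on the $y_0$-free monomials. For example, $p_2>_L p_1^2$ because shorter length wins, whereas $y_1^2>y_2$ in (degree) lex. Also, $>_L$ is not a genuine monomial order, since $1>_L p_1>_L p_1^2>_L\cdots$; it is only usable on the finite-dimensional $\Lambda$-graded pieces.

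Your weaker check (matching initial terms of the minors) and your fallback Hilbert-function count both need an argument you do not supply. That argument must relate the image of $J$ under $\psi$ in a fixed $\Lambda$-degree to $J$ in large standard degree. \cite{Con98} gives the Hilbert function of $K[y_0,\dots,y_n]/J$ in the standard grading. It does not give the $\Lambda$-graded Hilbert function of the dehomogenized ring $K[p]/J_N$, so that function is not ``known'' without this step.

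The missing idea is to compare monomials only within a fixed standard degree in the $y$'s. Use lex with $y_0>y_1>\cdots$, so that $y_0$ is largest and the leading terms are the main diagonals.

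\begin{enumerate}
\item If $m,m'$ have the same degree $D$, then $\len(\psi(m))=D-\deg_{y_0}m$. Lex first maximizes the $y_0$-exponent, so on degree-$D$ monomials it becomes exactly reverse length followed by lex in $p_1>p_2>\cdots$.
\item $\psi$ is injective on monomials of degree $D$.
\item Given $f\in I_\infty$, lift its coefficients and minors to get $F'\in J$ with $\psi(F')=f$. Split $F'$ into standard-homogeneous components, each of which lies in $J$. Pad them with powers of $y_0$ to obtain a single homogeneous $F\in J$ with $\psi(F)=f$.
\item By the first two points there is no cancellation, so $\psi(\init F)=\init f$.
\item By Conca, $\init F$ is divisible by a main diagonal $y_{i_1}\cdots y_{i_{r+1}}$ with $i_1<_1\cdots<_1 i_{r+1}$.
\item By the same degree-wise compatibility, the image of this diagonal under $\psi$ is the $>_L$-leading term of the corresponding minor of $A_\infty$, and it divides $\init f$.
\end{enumerate}

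Chains containing the index $0$ are exactly why trailing zeros count. This is what the paper's ``for $N$ large enough, $m'$ must lie in $\init(J)$'' encodes. Your truncation step, which correctly reduces to finitely many variables, then completes the proof.
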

\begin{proof}
    Let $R = K[y_0,\dots,y_n]$ be some polynomial ring in $n+1$ variables and $J$ be the ideal generated by all $\rminor$ Hankel minors in $y_0,\dots,y_n$. We fix a degree lexicographical monomial order on $R$ induced by $y_0 > y_1 > \dots > y_n$. Following \cite[Proposition 3.4]{Con98} and the subsequent discussion, we find that the $\rminor$ Hankel minors form a Gröbner basis for $J$ and in particular $\init(J)$ is generated by squarefree monomials $y_{i_1}\dots y_{i_{r+1}}$ where the indices form a $<_1$-chain, $i_1 <_1 \dots <_1 i_{r+1}$.

    We consider the homomorphism $\psi :R \longrightarrow \Lambda$ given by $\psi(y_i) := p_i$. It is not surjective as $\psi(y_0) = 1$, but it fixes the monomial order for fixed degree: Let $m,m'$ be two monomials in $[R]_d$ with $m < m'$. Then $\psi(m) <_L \psi(m')$. This homomorphism maps Hankel minors to Hankel minors.

    Let $m = p_1^{a_1}\dots p_\ell^{a_\ell}$ be any monomial in $\init(I_\infty)$. For $n$ large enough, we find a preimage of $m$ in $R$. It is of the form $m' = y_0^Ny_1^{a_1}\dots y_\ell^{a_\ell}$ and for $N$ large enough, it must lie in $\init(J)$. Thus $m'$ is divisible by a monomial of the form $y_{i_1}\dots y_{i_{r+1}}$ where the indices form a $<_1$-chain. Hence we can conclude that the MSF $\psi(m')$ is divisible by a $<_1$-chain of length $r+1$. All initial monomials of Hankel minors in $\Lambda$ are divisible by $<_1$-chains. Thus the initial monomials of the Hankel minors in $\Lambda$ already generate $\init(I_\infty)$. They must form a Gröbner basis.
\end{proof}

As we have seen in the proof, the initial ideal $\init(I_\infty)$ with respect to $>_L$ is linearly generated by all partitions which are divisible by a $<_1$-chain of length $r+1$.
We can stratify $\Lambda/\init(I_\infty)$ as follows: let $P_k$ be the $K$-space of partitions in $\Lambda$ that are divisible by a length $k$ $<_1$-chain, but not a length $k+1$ $<_1$-chain. Then if $I$ is the hook Specht ideal corresponding to $(\infty, 1^r)$,
\[
    \Lambda/\init(I_\infty) = \bigoplus_{k=1}^r P_k.
\]

A partition in $P_k$ is of the form
\[
\lambda = ((s_1 +1)^{a_1},s_1^{b_1},(s_2+1)^{a_2},s_2^{b_2},\dots, (s_{k-1}+1)^{a_{k-1}},s_{k-1}^{b_{k-1}},1^{a_{k}})
\]
for a $<_1$-chain $s_1 >_1 \dots >_1 s_{k-1} >_1 0$. Here the exponents $a_i$ can be any nonnegative integers and the $b_i$ any positive integers. This description is bijective, which allows us to compute the Hilbert series

\[
H(P_k;t) = \sum_{\bm{s},a,b} t^{a_k}\prod_{i=1}^{k-1} t^{(s_i+1)a_i + s_ib_i},
\]
where we sum over all $<_1$-chains $\bm s$, $s_1 >_1\dots >_1 s_{k-1} >_1 0$ and $a_i \geq 0$, $b_i\geq 1$.
Summing over the $a_i$ and $b_i$ is independent of the rest of the summation, so we can simplify
\[
H(P_k;t) = \frac{1}{1-t}\sum_{\bm{s}}\prod_{i=1}^{k-1} \frac{t^{s_i}}{(1-t^{s_i})(1-t^{s_i+1})}
\]
 
To keep the equations concise, we write
\[
B(s,k) = \prod_{i=0}^{k-1} (1-t^{s+i}),\qquad A(s,k) = \frac{t^{ks + k(k-1)}}{B(s,k)}.
\]

\begin{proposition}\label{prop_induction_formula}
    Let
    \[
    H(s,k) := \sum_{s_1 >_1\dots>_1s_k>_1 s}  \prod_{i=1}^k\frac{1}{1-t} \big(A(s_i,1) - A(s_i+1,1)\big)
    \]
    This infinite sum of rational functions can be simplified to the rational function
    \[
    H(s,k) = \frac{A(s+2,k)}{B(1,k)}.
    \]
\end{proposition}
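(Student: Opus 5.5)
The plan is to prove the formula by induction on $k$, using that the sum over the chain peels off one index at a time and that each step telescopes. The convention is $H(s,0)=1$, which agrees with $A(s+2,0)/B(1,0)=1$ (empty product, exponent $0$). All infinite sums are taken in $K[[t]]$. They converge $t$-adically, because $A(u,k)$ has $t$-adic valuation $ku+k(k-1)\to\infty$ as $u\to\infty$.

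\emph{Recursion.} In a chain $s_1>_1\dots>_1 s_k>_1 s$ the last index $u:=s_k$ ranges over $u\ge s+2$. Given $u$, the remaining indices form an arbitrary chain $s_1>_1\dots>_1s_{k-1}>_1 u$. This gives
\[
H(s,k)=\sum_{u\ge s+2}\frac{1}{1-t}\bigl(A(u,1)-A(u+1,1)\bigr)\,H(u,k-1).
\]
By the induction hypothesis, $H(u,k-1)=A(u+2,k-1)/B(1,k-1)$.

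\emph{Key identity.} The heart of the argument is the telescoping identity
\[
\bigl(A(u,1)-A(u+1,1)\bigr)\,A(u+2,k-1)=\frac{1-t}{1-t^k}\bigl(A(u,k)-A(u+1,k)\bigr).
\]
It is verified by direct computation, as follows.

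For the left-hand side, first note that
\[
A(u,1)-A(u+1,1)=\frac{t^u(1-t)}{(1-t^u)(1-t^{u+1})}.
\]
Multiplying by $A(u+2,k-1)$ gives
\[
\frac{(1-t)\,t^{ku+k(k-1)}}{B(u,k+1)}.
\]
The exponent is checked as $u+(k-1)(u+2)+(k-1)(k-2)=ku+k(k-1)$, and the denominators combine as $B(u,2)B(u+2,k-1)=B(u,k+1)$.

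For the right-hand side, factor out the common part of the two terms:
\[
A(u,k)-A(u+1,k)=\frac{t^{ku+k(k-1)}}{B(u+1,k-1)}\Bigl(\frac{1}{1-t^u}-\frac{t^k}{1-t^{u+k}}\Bigr).
\]
The bracket equals $(1-t^k)/\bigl((1-t^u)(1-t^{u+k})\bigr)$. Hence
\[
A(u,k)-A(u+1,k)=\frac{(1-t^k)\,t^{ku+k(k-1)}}{B(u,k+1)},
\]
which proves the identity.

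\emph{Conclusion.} Substituting the identity into the recursion, the sum over $u\ge s+2$ telescopes. Since $A(u,k)\to 0$ $t$-adically, the sum collapses to $\frac{1-t}{1-t^k}A(s+2,k)$. Therefore
\[
H(s,k)=\frac{1}{1-t}\cdot\frac{1-t}{1-t^k}\cdot\frac{A(s+2,k)}{B(1,k-1)}=\frac{A(s+2,k)}{B(1,k)},
\]
using $B(1,k-1)(1-t^k)=B(1,k)$.

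The main obstacle is finding the right telescoping identity; once it is guessed, checking it is a short computation. The remaining points to watch are the indexing, namely that $>_1$ forces $u\ge s+2$, and the justification of $t$-adic convergence of the infinite sums.
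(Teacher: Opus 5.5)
Your proposal is correct and follows essentially the same route as the paper: induction on $k$ by peeling off the last index $s_k\ge s+2$, then inserting the induction hypothesis. Each summand is rewritten as $\frac{1-t}{1-t^k}\bigl(A(u,k)-A(u+1,k)\bigr)$ and the sum telescopes; the paper does this rewriting through identity \eqref{equ_comb_identity}, and you package the same algebra as a single verified identity. Your explicit justification of $t$-adic convergence of the infinite sums is a small but welcome addition that the paper leaves implicit.
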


We will need the following identity several times.
\begin{equation}\label{equ_comb_identity}
    \frac{1-t^{k}}{B(s,k+1)} = \frac{1}{B(s,k)} - \frac{t^{k}}{B(s+1,k)}
\end{equation}
It can be shown by a simple computation
\[
\frac{1-t^{k}}{B(s,k+1)} = \frac{(1-t^{s+k})-t^{k}(1-t^s)}{B(s,k+1)} = \frac{(1-t^{s+k})}{B(s,k+1)} - \frac{t^{k}(1-t^s)}{B(s,k+1)} = \frac{1}{B(s,k)} - \frac{t^{k}}{B(s+1,k)}.
\]

Combining these results with the previous discussion allows us to finally prove Theorem \ref{thm_hook_specht_main}. 
\begin{proof}[Proof of Theorem \ref{thm_hook_specht_main}]
    Let $I$ be the hook Specht ideal corresponding to $(\infty,1^r)$. Then Corollary \ref{cor_hook_gen} shows that it is generated by the $\rminor$ minors of the infinite Hankel matrix $A_\infty$ as an $(\Lambda,\vee)$-ideal and Proposition \ref{prop_groebner_basis} confirms that this generating set is already a Gröbner basis with respect to the monomial order $>_L$. With the subsequent discussion, we see that we can therefore decompose $\Lambda/\init(I_\infty) = \bigoplus_{k = 1}^r P_k$.
    Using identity \eqref{equ_comb_identity}, we see that $H(P_k;t) = \frac{1}{1-t}H(0,k-1)$. Thus using Proposition \ref{prop_induction_formula}, we find
    \begin{align*}
        H^\Sym(K[X]/I;t) &= \sum_{k=1}^r H(P_k;t) \\
        &= \frac{1}{1-t}\sum_{k=0}^{r-1}H(0,k) \\
        &= \frac{1}{1-t}\sum_{k=0}^{r-1} \frac{t^{(k+1)k}}{B(1,k)B(2,k)} \\
        &= \frac{1}{1-t}\sum_{k=0}^{r-1}\prod_{i=1}^k\frac{t^{2i}}{(1-t^i)(1-t^{i+1}) }.
    \end{align*}
\end{proof}

\begin{proof}[Proof of Proposition \ref{prop_induction_formula}]
    We use induction over $k$. The case $k=0$ is clear since $H(s,0) = 1$ for all $s$. Now assume
    \[
    H(s,k') = \frac{A(s+2,k')}{B(1,k')}
    \]
    for all $k'<k$.
    Factoring out common terms of the form $\frac{1}{1-t}\big(A(s_i,1) - A(s_i+1,1)\big)$ in the infinite sum expression of $H(s,k)$ lets us write it as a nested sum
    \[
    H(s,k) = \sum_{s_k >_1 s}\frac{A(s_k,1) - A(s_k+1,1)}{1-t}\sum_{s_{k-1}>_1s_{k}}\frac{A(s_{k-1},1) - A(s_{k-1}+1,1)}{1-t}\dots\sum_{s_1 >_1s_2} \frac{A(s_1,1) - A(s_1+1,1)}{1-t}.
    \]
    This can be simplified to
    \[
    H(s,k) =  \sum_{s_k >_1 s}\frac{A(s_k,1) - A(s_k+1,1)}{1-t}H(s_k,k-1) =  \sum_{s_k >_1 s}\frac{A(s_k,1) - A(s_k+1,1)}{1-t} \cdot\frac{A(s_k+2,k-1)}{B(1,k-1)},
    \]
    where the second equality follows from the induction assumption. We now use identity \eqref{equ_comb_identity} and the definition of $A(s,k)$ and $B(s,k)$ to simplify

    \begin{align}
        H(s,k) &= \sum_{s_k >_1 s}\frac{A(s_k,1) - A(s_k+1,1)}{1-t} \cdot\frac{A(s_k+2,k-1)}{B(1,k-1)} \label{eq:step1}\\ 
        &= \sum_{s_k >_1 s}\frac{t^{s_k}}{(1-t^{s_k})(1-t^{s_k+1})}\cdot \frac{t^{(s_k+2)(k-1) +(k-1)(k-2)}}{B(1,k-1)B(s_k+2,k-1)} \label{eq:step2}\\
        &= \sum_{s_k >_1 s}\frac{t^{ks_k + k(k-1)}}{B(1,k-1)B(s_k,k+1)} \label{eq:step3}\\ 
        &= \frac{1}{B(1,k)}\sum_{s_k >_1 s}\frac{(1-t^k)t^{ks_k + k(k-1)}}{B(s_k,k+1)}  \label{eq:step4}\\
        &= \frac{1}{B(1,k)}\sum_{s_k >_1 s}\Big( \frac{t^{ks_k + k(k-1)}}{B(s_k,k)} - \frac{t^{k(s_k+1) + k(k-1)}}{B(s_k+1,k)} \Big) \label{eq:step5}\\ 
        &= \frac{1}{B(1,k)}\sum_{s_k >_1 s}\big(A(s_k,k) - A(s_k+1,k)\big) \label{eq:step6} \\ 
        &= \frac{A(s+2,k)}{B(1,k)} \label{eq:step7}
    \end{align}
    We go through the simplification step by step. At \eqref{eq:step1} we use the induction assumption. At \eqref{eq:step2} we use identity \eqref{equ_comb_identity} on the left fraction and the definition of $A(s,k)$ on the right fraction. At \eqref{eq:step3} we combine the fractions, simplify the exponent of the numerator and absorb $(1-t^{s_k})(1-t^{s_k+1})$ into $B(s_k+2,k-1)$ to get $B(s_k,k+1)$. At \eqref{eq:step4} we expand the fraction by $\frac{1-t^k}{1-t^k}$, then absorb the denominator into $B(1,k-1)$ to get $B(1,k)$. This is independent of $s_k$, thus we pull it out of the sum.
    At \eqref{eq:step5} we use identity \eqref{equ_comb_identity} which allows us to rewrite terms inside of the sum in terms of $A(s,k)$ in \eqref{eq:step6}. Finally, at \eqref{eq:step7} we use a telescoping series to obtain the final result and complete the induction.
\end{proof}

\section{Isotypic Hilbert Series}\label{sec_isotypic}

The symmetric Hilbert series captures the contribution of the trivial representation to the
graded pieces of the quotient $K[X_n]/I_n$.  In this section we extend Theorem~\ref{thm_main}
to arbitrary irreducible representations in the \emph{stable range}.  The key idea is to replace
full symmetrization by partial symmetrization with respect to Young subgroups, and then to
recover individual multiplicities via Kostka inversion.

\subsection{Padded partitions and stable multiplicities}

Fix a partition $\mu\vdash n_0$ with $\ell$ parts.  For every $n\ge n_0$ we define the
\emph{padded partition}
\[
   \mu(n) \;=\; (n-n_0+\mu_1,\,\mu_2,\,\dots,\,\mu_\ell) \;\vdash\; n.
\]
Note that $\mu(n_0)=\mu$ and that $\mu(n)$ is obtained from $\mu$ by adding $n-n_0$ to the
first part.  For $\mu=(n_0)$ the trivial partition, $\mu(n)=(n)$ corresponds to the trivial
representation, and we recover the setting of the preceding sections.

\begin{definition}\label{def_isotypic_hs}
Let $I\subset K[X]$ be a homogeneous symmetric ideal.  The \emph{isotypic Hilbert series}
of $K[X]/I$ associated to $\mu$ is the formal power series
\[
   H^{\mu}(K[X]/I;\,t)
   \;=\; \sum_{d\ge 0}\Bigl(\lim_{n\to\infty}c_{\mu(n),d}(n)\Bigr)\,t^d,
\]
where $c_{\mu(n),d}(n)$ denotes the multiplicity of the irreducible representation $V^{\mu(n)}$ in the
$\Sym(n)$-representation $[K[X_n]/I_n]_d$.
\end{definition}

The existence of the limit and the rationality of the series will follow from the next two
subsections.

\subsection{Block-symmetric limits}

The \emph{Young subgroup} associated to $\mu(n)$ is
\[
   S_{\mu(n)} \;=\;
   \Sym(\{1,\dots,n\!-\!n_0\!+\!\mu_1\})
   \;\times\;
   \Sym(\{n\!-\!n_0\!+\!\mu_1\!+\!1,\dots,n\!-\!n_0\!+\!\mu_1\!+\!\mu_2\})
   \;\times\;\cdots\;\times\;
   \Sym(\{n\!-\!\mu_\ell\!+\!1,\dots,n\}).
\]
As $n$ increases, only the first factor grows; the remaining $\ell-1$ factors are isomorphic
to fixed symmetric groups $\Sym(\mu_2),\dots,\Sym(\mu_\ell)$.

The ring of $S_{\mu(n)}$-invariants decomposes as
\[
   K[X_n]^{S_{\mu(n)}}
   \;\cong\;
   K[X_{n-n_0+\mu_1}]^{\Sym(n-n_0+\mu_1)}
   \;\otimes_K\; R_\mu,
\]
where
\[
   R_\mu
   \;=\;
   K[X_{\mu_2}]^{\Sym(\mu_2)}\otimes_K\cdots\otimes_K K[X_{\mu_\ell}]^{\Sym(\mu_\ell)}
\]
is a polynomial ring in $\mu_2+\cdots+\mu_\ell = n_0-\mu_1$ generators of degrees
$1,\dots,\mu_2;\; 1,\dots,\mu_3;\;\dots;\; 1,\dots,\mu_\ell$.  Here we use the fundamental
theorem of symmetric polynomials in each block.

\begin{definition}\label{def_block_sym_limit}
Let $I\subset K[X]$ be a homogeneous symmetric ideal.  For each $n\ge n_0$, denote by
$I_n^{S_{\mu(n)}}$ the subspace of $S_{\mu(n)}$-invariants in $I_n$.

Define the \emph{shifted inclusion} $\iota_s\colon K[X_n]\rar K[X_{n+1}]$ by
$\iota_s(x_k) = x_{k+1}$.  For $f\in I_n$, the image $\iota_s(f) = f(x_2,\dots,x_{n+1})$
lies in $I_{n+1}$: the substitution $x_k\mapsto x_{k+1}$ is realised by the permutation
$\tau = (1,\,2,\,\dots,\,n,\,n+1)\in\Sym(\infty)$, and $I$ is $\Sym(\infty)$-invariant.

Define the \emph{block-symmetric partial symmetrization}
$\phi_n^\mu\colon K[X_n]\rar K[X_n]^{S_{\mu(n)}}$
by averaging over $S_{\mu(n)}$:
\[
   \phi_n^\mu(f)
   \;=\;
   \frac{1}{|S_{\mu(n)}|}\sum_{\sigma\in S_{\mu(n)}}\sigma\cdot f.
\]
For increasing $n$, the images $I_n^{S_{\mu(n)}}$ form a directed system under the maps
$\phi_{n+1}^\mu\circ\iota_s$.  This is well-defined: since $\iota_s(f)\in I_{n+1}$ and
$I_{n+1}$ is $S_{\mu(n+1)}$-invariant, $\phi_{n+1}^\mu(\iota_s(f))\in I_{n+1}^{S_{\mu(n+1)}}$.

The \emph{block-symmetric limit} is the direct limit
\[
   I_\infty^\mu
   \;=\;
   \varinjlim\, I_n^{S_{\mu(n)}}
   \;\subset\;
   \Lambda\otimes_K R_\mu,
\]
where $\Lambda = \varinjlim K[X_n]^{\Sym(n)}$ as in Definition~\ref{def_sym_lim}.
For each $n$, we have a limit map, the \emph{block-symmetric total symmetrization}
\[
\phi_{\infty}^\mu: \,K[X_n] \rar \Lambda\otimes_K R_\mu.
\]
\end{definition}

\begin{lemma}\label{lemma_block_sym_dim}
For every $d\in\NN$, the block-symmetric dimension
\[
   \SD^\mu(K[X]/I,\,d)
   \;=\;
   \lim_{n\rar\infty}\dim_K\bigl[K[X_n]^{S_{\mu(n)}}/I_n^{S_{\mu(n)}}\bigr]_d
\]
exists and is finite.
\end{lemma}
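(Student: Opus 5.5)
The plan is to copy the proof of Lemma~\ref{lemma_fin_sym_dim}: show that both $\dim_K[K[X_n]^{S_{\mu(n)}}]_d$ and $\dim_K[I_n^{S_{\mu(n)}}]_d$ are eventually constant in $n$. Their difference is the quantity in the statement, so it then converges to a finite limit.

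\textbf{Step 1: the ambient dimension.} Use the decomposition
\[
K[X_n]^{S_{\mu(n)}}\cong K[X_{n-n_0+\mu_1}]^{\Sym(n-n_0+\mu_1)}\otimes_K R_\mu .
\]
The factor $R_\mu$ does not depend on $n$. By the fundamental theorem of symmetric polynomials, the first factor has degree-$e$ part of dimension equal to the number of partitions of $e$ with at most $n-n_0+\mu_1$ parts. This count is constant once $n-n_0+\mu_1\ge e$. Hence
\[
\dim_K[K[X_n]^{S_{\mu(n)}}]_d=\sum_{e+e'=d}\dim_K\bigl[K[X_{n-n_0+\mu_1}]^{\Sym}\bigr]_e\cdot\dim_K[R_\mu]_{e'}
\]
is constant for $n\ge d+n_0-\mu_1$. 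Call this stable value $D_d$.

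\textbf{Step 2: monotonicity of the ideal dimensions.} The sequence $\dim_K[I_n^{S_{\mu(n)}}]_d$ is bounded above by the ambient dimension, hence eventually by $D_d$. It therefore suffices to show it is non-decreasing for large $n$. For this I would show that the transition map $\phi^\mu_{n+1}\circ\iota_s$ is injective on $[K[X_n]^{S_{\mu(n)}}]_d$, and so on $[I_n^{S_{\mu(n)}}]_d$.

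Under $\iota_s$, the blocks of $\mu(n)$ are shifted by one position: the first block $\{1,\dots,n-n_0+\mu_1\}$ goes to $\{2,\dots,n-n_0+\mu_1+1\}$, and each later block goes exactly onto the corresponding later block of $\mu(n+1)$. So $\iota_s$ sends $S_{\mu(n)}$-invariants to polynomials invariant under $\Sym(\{2,\dots,n-n_0+\mu_1+1\})$ and under the later block groups. Averaging over $S_{\mu(n+1)}$ then reduces to averaging over the enlarged first block. Written in the tensor description, this map is the identity on $R_\mu$. On the first factor it is the inclusion $K[X_m]^{\Sym}\to K[X_{m+1}]^{\Sym}$ given by full symmetrization, which sends $m_\lambda$ to $m_\lambda$ and is injective.

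With injectivity in hand, the dimensions form a non-decreasing sequence of integers bounded by $D_d$, so they stabilize. The difference of the two stable values is then the finite limit $\SD^\mu(K[X]/I,d)$.

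\textbf{Expected obstacle.} The main point needing care is the identification in Step 2 of $\phi^\mu_{n+1}\circ\iota_s$ with ``identity on $R_\mu$, monomial-symmetric inclusion on the first factor''. Concretely, one has to check on a basis element $m_\lambda(x_1,\dots,x_m)\otimes r$, with $m=n-n_0+\mu_1$, that shifting and then averaging over $\Sym(\{1,\dots,m+1\})$ gives $m_\lambda(x_1,\dots,x_{m+1})\otimes r$. Here $r$ is carried along unchanged because the later blocks are matched exactly by the shift. This is the computation of Lemma~\ref{lemma_part_sym}(3) together with the stated behaviour of normalized monomial symmetric polynomials under $\phi_{m+1}$. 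The rest is routine counting.
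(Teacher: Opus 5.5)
Your proposal is correct and follows the same route as the paper. The ambient dimension stabilizes via the tensor decomposition and the elementary-symmetric generators of the first block, and the ideal dimensions are non-decreasing and bounded because the transition maps $\phi^\mu_{n+1}\circ\iota_s$ are injective. Your explicit check of that injectivity — the shift matches the later blocks exactly, and averaging over the enlarged first block sends $m_\lambda\otimes r$ to $m_\lambda\otimes r$ — fills in a step the paper only asserts by calling these maps inclusions.
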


\begin{proof}
The argument parallels that of Lemma~\ref{lemma_fin_sym_dim} and is a direct adaptation. Note that  the first tensor factor
$K[X_{n-n_0+\mu_1}]^{\Sym(n-n_0+\mu_1)}$ is generated by elementary symmetric polynomials
of degrees $1,\dots,n-n_0+\mu_1$. Therefore, the space $[K[X_n]^{S_{\mu(n)}}]_d$ is constant for
$n-n_0+\mu_1\ge d$, i.e., for $n\ge d+n_0-\mu_1$.  The inclusions
$\phi_{n+1}^\mu\circ \iota_{s}{:}\ I_n^{S_{\mu(n)}}\irar I_{n+1}^{S_{\mu(n+1)}}$ show that
$\dim_K[I_n^{S_{\mu(n)}}]_d$ is non-decreasing and bounded, hence eventually constant.
\end{proof}

The block-symmetric Hilbert series
\[
   H^{S_\mu}(K[X]/I;\,t)
   \;=\;
   \sum_{d\ge 0}\SD^\mu(K[X]/I,\,d)\,t^d
   \;=\;
   H\bigl(\Lambda\otimes R_\mu/I_\infty^\mu;\,t\bigr)
\]
is therefore a well-defined formal power series with non-negative integer coefficients.

\subsection{Rationality of the block-symmetric Hilbert series}

\begin{theorem}\label{thm_block_sym_rational}
Let $I\subset K[X]$ be a homogeneous symmetric ideal satisfying condition $(\dagger)$, and let
$\mu\vdash n_0$.  Then the block-symmetric Hilbert series $H^{S_\mu}(K[X]/I;\,t)$ is a rational
function.
\end{theorem}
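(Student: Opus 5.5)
We mimic the proof of Theorem~\ref{thm_main}, now working in $\Lambda\otimes_K R_\mu$ with basis elements $\lambda\otimes u$, where $\lambda$ is an MSF and $u$ is a monomial in the finitely many block variables. Since $R_\mu$ is a polynomial ring with fixed finitely many generators, it is harmless to pass to the monomial basis $\lambda\otimes x^\beta$ of $\Lambda\otimes K[x_{\text{blocks}}]^{\text{block-sym}}$. Concretely, we use the MSF-type basis in each small block, which is finite in each degree.

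\textbf{Term order.} Fix a total order on this basis by comparing first the $\Lambda$-component with the reverse-length lexicographic order $\geq$ of Subsection~\ref{subsec_revlex_order}, and breaking ties with a monomial order on the $R_\mu$-part. Lemma~\ref{initial_ideal_const_dim} applies verbatim to graded subspaces with finite-dimensional graded pieces, so
\[
H^{S_\mu}(K[X]/I;t)=H\bigl((\Lambda\otimes R_\mu)/\init(I^\mu_\infty);t\bigr).
\]

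\textbf{Closure properties.} Next we reprove Proposition~\ref{prop_initial_ideal_closed_cond} in this setting. The space $\init(I^\mu_\infty)$ is stable under three operations: disjoint product with MSFs in the $\Lambda$-factor, joint product with MSFs in the $\Lambda$-factor, and multiplication by monomials of $R_\mu$. The proofs are identical. For the disjoint product, multiply a lift $f_{(n)}$ by a monomial in fresh variables of the first (large) block and apply $\phi^\mu_{n+k}$. Using the shifted inclusion $\iota_s$ guarantees that fresh first-block variables are available. For the joint product, multiply by $m_\mu$ in the first block. For $R_\mu$, multiply directly by a block-symmetric polynomial in the small blocks, which commutes with $\phi^\mu$; choosing the tie-break order compatibly makes initial terms multiplicative.

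\textbf{Bounding the rank.} Condition $(\dagger)$ is used as in Lemma~\ref{lemma_dagger_block_eq}. Take $f\in I$ as in $(\dagger)$ and use symmetry to place $\Supp(m_1)$ inside the first block, away from the small blocks. Multiplying to a block shape $x_1^b\cdots x_a^b$ and block-symmetrizing yields an element whose initial term is $(b^a)\otimes 1$. Every other term has $\Lambda$-part of greater length, or lives partly in the small blocks with $\Lambda$-part of length at least $a$. The second case is the delicate point. A term of $f$ whose support avoids $\Supp(m_1)$ in the first block could lie entirely in the small blocks and have shorter first-block part. We avoid this by first applying $\sigma\in\Sym(\infty)$ so that all variables of $f$ land in the first block, which is possible once $n$ is large. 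Then every other term has first-block support not contained in $\Supp(m_1)$, hence length $>a$. Joint and disjoint closure then give $(M^M)\otimes u\in\init(I^\mu_\infty)$ for all $M\ge\max\{a,b\}$ and all $u$.

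\textbf{Conclusion via Hilbert--Serre.} Decompose the quotient by the rank of the $\Lambda$-part:
\[
\bigoplus_{m<\max\{a,b\}} R_m\otimes R_\mu .
\]
Each summand is a module over $A_m\otimes R_\mu$, a polynomial ring in finitely many variables, acting as in the proof of Theorem~\ref{thm_main}. It is generated by $(m^m)\otimes 1$, hence finitely generated, so Hilbert--Serre gives rationality of each summand and therefore of their finite sum.

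The main obstacle is verifying that the chosen product order makes initial terms behave well under joint multiplication when $R_\mu$-parts are present. Products of block-MSFs spread over several block monomials, but the top one is the expected product. Compatibility of the cross terms with the lexicographic tie-break is what must be checked.
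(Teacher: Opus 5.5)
\textbf{Overall.} Your three steps are the paper's: an order comparing the $\Lambda$-factor first, joint--disjoint closure in the $\Lambda$-factor, the block shape $(b^a)\otimes 1$ from $(\dagger)$ once all variables of $f$ sit in the growing first block, and rank slices with Hilbert--Serre over $A_m\otimes R_\mu$.

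\textbf{The gap.} The one place you depart from the paper is the block-MSF basis of $R_\mu$, and this breaks the step the module structure rests on. With that basis, $\init(I^\mu_\infty)$ is \emph{not} stable under genuine multiplication by $R_\mu$.

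Here is a counterexample. Take $\mu=(2,2)$, so $R_\mu=K[y_1,y_2]^{S_2}$, and let $I$ be generated by the $\Sym(\infty)$-orbits of $x_1x_2(x_1-x_2)$ and $x_1^{10}$; then $(\dagger)$ holds. Because $\emptyset$ is the largest $\Lambda$-part, the initial terms of the form $\emptyset\otimes u$ are exactly the leading terms of $J^{S_2}$. Here $J$ is the image of $I_n$ after setting the first-block variables to $0$, and below degree $10$ it equals $(y_1y_2(y_1-y_2))$. In degrees $4$ and $5$, $J^{S_2}$ is spanned by $2m_{(3,1)}-2m_{(2,2)}$ and by $2m_{(4,1)}-2m_{(3,2)}$ respectively. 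With your lex-type tie-break, $\emptyset\otimes m_{(3,1)}\in\init(I^\mu_\infty)$. However, $(\emptyset\otimes m_{(3,1)})\cdot m_{(1)}=\tfrac12\,\emptyset\otimes(m_{(4,1)}+m_{(3,2)})$ is not in $\init(I^\mu_\infty)$, since $\emptyset\otimes m_{(3,2)}\notin\init(I^\mu_\infty)$.

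Your observation that the top term of $m_\alpha m_\beta$ is $m_{\alpha+\beta}$ only gives closure under the leading-term operation $\lambda\otimes m_\alpha\mapsto\lambda\otimes m_{\alpha+\beta}$. So, as written, the rank slices are not $A_m\otimes R_\mu$-modules and Hilbert--Serre cannot be invoked.

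\textbf{The fix.} Use the paper's basis: monomials $e^\gamma$ in the generators $e^k_i$ of $R_\mu$, with a degree-lexicographic tie-break. Multiplication by $e^\delta$ then sends basis elements to basis elements, and $\init(f\,e^\delta)=\init(f)\,e^\delta$. Hence $\init(I^\mu_\infty)$ is a genuine $R_\mu$-submodule, and each rank slice is a quotient of the free cyclic $A_m\otimes R_\mu$-module generated by $(m^m)\otimes 1$. Alternatively, keep block-MSFs but let $R_\mu$ act through $m_\alpha * m_\beta:=m_{\alpha+\beta}$, exactly as $\wedge$ replaces the product on $\Lambda$; this is again a polynomial ring, free on the $(1^j)$ in each block.

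\textbf{Two smaller corrections.} First, the obstacle you single out is misplaced. The joint product acts only on the $\Lambda$-factor and leaves $R_\mu$-parts untouched, so the argument of Proposition~\ref{prop_initial_ideal_closed_cond} goes through verbatim there. Second, the summands are the rank-$m$ slices $R^\mu_m$ of the quotient, not $R_m\otimes R_\mu$. The reason is that $\init(I^\mu_\infty)$ need not have the form $V\otimes R_\mu$: in the example it meets $\emptyset\otimes R_\mu$ nontrivially although $\emptyset\notin\init(I_\infty)$.
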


\begin{proof}
The strategy follows the proof of Theorem~\ref{thm_main}, with the polynomial ring $R_\mu$
carried along as an additional finitely generated factor.

\medskip\noindent\textbf{Step~1: Joint-disjoint closedness of the initial space.}
We equip $\Lambda\otimes_K R_\mu$ with the monomial order that first compares the $\Lambda$-factor
using the reverse-length lexicographic order~$\ge$ from Section~\ref{subsec_revlex_order}, and breaks ties by degree
lexicographic order on $R_\mu$. For this we need to define a monomial basis on $R_\mu$.

Let $\ell$ be the length of $\mu$. By the theorem of symmetric polynomials, we can write the tensor factor of $R_\mu$ corresponding to $\Sym(\mu_k)$ explicitly as $K[e_1,\dots,e_{{\mu_k}}]$ where $e_i$ is the $i$-th elementary symmetric polynomial in $\mu_k$-variables. Thus we can write $R_\mu$ formally as a polynomial ring
$R_\mu \cong K\big[e_{i_k}^k\mid 2 \leq k \leq \ell,\ 1\leq i_k \leq \mu_k\big]$, where the variable $e_i^k$ has degree $i$. We obtain a degree lexicographical order on $R_\mu$ by some order of the variables $e_{i_k}^k$.

Given $f\in\Lambda\otimes R_\mu$, its \emph{initial monomial}
$\init(f)$ is the maximal monomial $\lambda\otimes\mathbf{r}$ appearing.

We claim that the initial space $\init(I_\infty^\mu)\subset\Lambda\otimes R_\mu$ is
\emph{joint-disjoint closed in the $\Lambda$-factor}: if $\lambda\otimes\mathbf{r}\in
\init(I_\infty^\mu)$ and $\nu$ is any partition, then
$(\nu\vee\lambda)\otimes\mathbf{r}\in\init(I_\infty^\mu)$ and
$(\nu\wedge\lambda)\otimes\mathbf{r}\in\init(I_\infty^\mu)$.

For the disjoint product, we adapt the argument of
Proposition~\ref{prop_initial_ideal_closed_cond}.  Given $f\in I_\infty^\mu$ with
$\init(f)=\lambda\otimes\mathbf{r}$ and a partition $\nu=(\nu_1,\dots,\nu_k)$, lift $f$
to $f_{(n)}\in I_n^{S_{\mu(n)}}$ for $n$ large.  Apply $\iota_s$ a total of $k$ times to
obtain $f_{(n)}(x_{k+1},\dots,x_{n+k})\in I_{n+k}$ (as a composition of $k$ shifted
inclusions, each realised by a permutation in $\Sym(\infty)$).  For $n$ large enough,
the variables $x_1,\dots,x_k$ all lie in Block~$1$ at stage~$n+k$
(since $|\text{Block}_1^{(n+k)}| = n+k-n_0+\mu_1\ge k$) and are disjoint from the support
of $f_{(n)}(x_{k+1},\dots,x_{n+k})$.  Set
\[
   g_{(n+k)}
   \;=\;
   \phi_{n+k}^\mu\bigl(f_{(n)}(x_{k+1},\dots,x_{n+k})\cdot
   x_1^{\nu_1}\cdots x_k^{\nu_k}\bigr)
   \;\in\; I_{n+k}^{S_{\mu(n+k)}}.
\]
Since the monomial $x_1^{\nu_1}\cdots x_k^{\nu_k}$ is supported in Block~$1$ and
disjoint from the support of the shifted $f_{(n)}$, the same argument as in
Proposition~\ref{prop_initial_ideal_closed_cond} gives $\init(g)=(\nu\vee\lambda)\otimes\mathbf{r}$.

For the joint product, we only need to consider the case $\len(\nu) \leq \len(\lambda)$. This follows from the same argument as in the proof of Proposition \ref{prop_initial_ideal_closed_cond}.  The ideal $I_n^{S_{\mu(n)}}$ is a module over the invariant
ring $K[X_{n-n_0+\mu_1}]^{\Sym(n-n_0+\mu_1)}$ of the first block.  Multiplication of $f_{(n)}$
by a monomial symmetric polynomial $m_\nu\in K[X_{n-n_0+\mu_1}]^{\Sym(n-n_0+\mu_1)}$ preserves
$I_n^{S_{\mu(n)}}$.  The product $m_\lambda m_\nu$ in the finitely generated ring has initial MSF proportional
to $m_{\lambda\wedge\nu}$, since $\len(\nu) \leq \len(\lambda)$ and the $R_\mu$-component of $f_{(n)}$ is unaffected. Passing to the
limit gives $(\nu\wedge\lambda)\otimes\mathbf{r}\in\init(I_\infty^\mu)$.

\medskip\noindent\textbf{Step~2: Rank bound from condition $(\dagger)$.}
Let $f\in I$ satisfy $(\dagger)$: $f=\sum_{i=1}^N c_i m_i$ with
$\Supp(m_i)\not\subset\Supp(m_1)$ for $i\ge 2$.  By symmetry of $I$, we may assume
$\Supp(m_1)=\{1,\dots,a\}\subset\{1,\dots,n-n_0+\mu_1\}$ for $n$ sufficiently large, so that
$m_1$ is supported entirely in the growing first block.  Write $m_1=x_1^{\alpha_1}\cdots
x_a^{\alpha_a}$ and set $b=1+\max\{\alpha_i\}$.  Then
\[
   f' \;=\;\phi_\infty^\mu\bigl(f\cdot x_1^{b-\alpha_1}\cdots x_a^{b-\alpha_a}\bigr)
   \;\in\; I_\infty^\mu
\]
has initial monomial $(b^a)\otimes 1$, since all other terms have strictly greater length in the
$\Lambda$-factor.  Extending $(b^a)$ by joint and disjoint products in $\Lambda$ shows that
$\lambda\otimes\mathbf{r}\in\init(I_\infty^\mu)$ for all $\lambda$ with
$\rk(\lambda)\ge M:=\max\{a,b\}$ and all $\mathbf{r}\in R_\mu$.

\medskip\noindent\textbf{Step~3: Rank decomposition.}
Exactly as in the proof of Theorem~\ref{thm_main}, we decompose
\[
   \Lambda\otimes R_\mu/\init(I_\infty^\mu)
   \;=\;
   \bigoplus_{m=0}^{M-1} R_m^\mu,
\]
where $R_m^\mu=\langle\lambda\otimes\mathbf{r}\notin\init(I_\infty^\mu)\mid\rk(\lambda)=m
\rangle_K$.  Each $R_m^\mu$ is a finitely generated graded module over
$A_m\otimes R_\mu\cong K[y_1,z_1,\dots,y_m,z_m]\otimes R_\mu$, which is a polynomial ring in
$2m+(n_0-\mu_1)$ variables.  By the Hilbert--Serre theorem, $H(R_m^\mu;\,t)$ is a rational
function, and
\[
   H^{S_\mu}(K[X]/I;\,t)
   \;=\;\sum_{m=0}^{M-1}H(R_m^\mu;\,t)
\]
is a finite sum of rational functions, hence rational.
\end{proof}

\subsection{Kostka inversion and isotypic rationality}

We now pass from block-symmetric invariants to individual multiplicities.
For an introduction to Kostka numbers, see \cite[Section 2.11]{Sag01}.

\begin{lemma}[Stable Kostka matrix]\label{lemma_stable_kostka}
Let $\mu,\nu\vdash n_0$.  Then for all $n$ sufficiently large (specifically, $n\ge 2n_0$), the
Kostka number $K_{\nu(n),\mu(n)}$ is independent of $n$.  We denote this stable value by
$\overline{K}_{\nu,\mu}$.
\end{lemma}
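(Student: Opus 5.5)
The plan is a direct bijective count of semistandard Young tableaux. Recall that $K_{\nu(n),\mu(n)}$ counts SSYT of shape $\nu(n)$ and content $\mu(n)$. Here content $\mu(n)$ means $n-n_0+\mu_1$ entries equal to $1$ and $\mu_i$ entries equal to $i$ for $i\ge 2$. In any SSYT all $1$'s lie in the first row, since columns strictly increase. So the tableau is determined by the skew tableau $T'$ obtained by deleting the $1$'s. It has shape $\nu(n)/(n-n_0+\mu_1)$, entries in $\{2,\dots,\ell\}$ with content $(\mu_2,\dots,\mu_\ell)$, and it is semistandard.

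First I check the shape is a genuine skew shape, i.e.\ that $n-n_0+\mu_1\le \nu(n)_1 = n-n_0+\nu_1$. This holds iff $\mu_1\le\nu_1$; otherwise $K=0$ for all $n$ and the statement is trivial. Assume $\mu_1\le\nu_1$. The skew shape then consists of:
\begin{itemize}
\item a first-row segment in columns $n-n_0+\mu_1+1,\dots,n-n_0+\nu_1$, of length $\nu_1-\mu_1$;
\item the rows $2,\dots,\len(\nu)$ of $\nu$, unchanged and occupying columns $1,\dots,\nu_2$.
\end{itemize}

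The key observation is that once $n-n_0+\mu_1\ge \nu_2$, no cell of the first-row segment lies above any cell of rows $\ge 2$. Since $\nu_2\le n_0$ and $\mu_1\ge 1$, this is guaranteed for $n\ge 2n_0$. In that range the skew shape is a disjoint union of two non-interacting pieces: a horizontal strip of length $\nu_1-\mu_1$, and the partition $\bar\nu=(\nu_2,\nu_3,\dots)$.

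The column-strictness condition between the first-row piece and the lower rows is then vacuous, because the two pieces share no columns. Row-weak-increase inside the first row is also unaffected by the deleted $1$'s. Hence $T'$ is exactly a pair consisting of:
\begin{itemize}
\item a weakly increasing word of length $\nu_1-\mu_1$ in $\{2,\dots,\ell\}$;
\item an SSYT of shape $\bar\nu$ in $\{2,\dots,\ell\}$;
\end{itemize}
with total content $(\mu_2,\dots,\mu_\ell)$. None of these data depend on $n$, so the count is constant for $n\ge 2n_0$. This gives $\overline{K}_{\nu,\mu}$, which equals $\sum_\alpha K_{\bar\nu,\alpha}$ over compositions $\alpha\le(\mu_2,\dots,\mu_\ell)$ with $|\alpha|=|\bar\nu|$.

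The main point needing care is the geometric claim that the segment and the lower rows share no columns. This is exactly where the threshold on $n$ enters, and I will state it via $n-n_0+\mu_1\ge\nu_2$. The bound $n\ge 2n_0$ is generous but sufficient. I would also note the degenerate case where $\nu(n)$ fails to be a partition: this cannot happen for $n\ge 2n_0$, since then $n-n_0+\nu_1\ge\nu_2$.
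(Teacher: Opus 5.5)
Your proof is correct and uses the same key observation as the paper: all $1$'s lie in the first row, and once there are $n-n_0+\mu_1>\nu_2$ of them, the rest of the first row shares no columns with rows $\ge 2$. The paper deletes one leading $1$ to get a bijection between the stage-$n$ and stage-$(n-1)$ tableaux, whereas you delete all the $1$'s at once, which additionally gives an explicitly $n$-independent model and the formula $\overline{K}_{\nu,\mu}=\sum_\alpha K_{\bar\nu,\alpha}$.
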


\begin{proof}
Recall that $K_{\nu(n),\mu(n)}$ counts the number of semi-standard Young tableaux (SSYT) of
shape $\nu(n)$ and content $\mu(n)$.  The content $\mu(n)$ has $n-n_0+\mu_1$ entries equal to~$1$,
followed by $\mu_2$ entries equal to~$2$, etc.

Since columns of a SSYT are strictly increasing, all entries equal to~$1$ must appear in the first row. In particular, the inequality $\nu_1\ge\mu_1$ is necessary for $K_{\nu(n),\mu(n)}\ne 0$.
For $n \geq 2n_0$, the first $n-n_0 + \mu_1 > n_0 \geq \nu_2$ entries of the first row of the SSYT of shape $\nu(n)$ are forced to be $1$. Removing the first cell of the first row thus yields a SSYT of shape $\nu(n-1)$ and content $\mu(n-1)$, since the columns are still strictly increasing. This is the case as the condition of strictly increasing columns is only nontrivial for the first $\nu_2$ cells of the first row.

This removal of the first cell defines a bijection of SSYTs of shape $\nu(n)$ and content $\mu(n)$ to SSYTs of shape $\nu(n-1)$ and content $\mu(n-1)$.
Therefore the Kostka numbers are equal and we have $K_{\nu(n),\mu(n)} = K_{\nu(2n_0),\mu(2n_0)}$ for $n \geq 2n_0$. 
\end{proof}

The matrix $\overline{K}=(\overline{K}_{\nu,\mu})_{\nu,\mu\vdash n_0}$ is unitriangular with
respect to the dominance order on partitions of $n_0$, and hence invertible over $\ZZ$.

\begin{proposition}[Frobenius reciprocity]\label{prop_frobenius}
Let $M$ be any finite-dimensional $\Sym(n)$-representation.  Then
\[
   \dim_K M^{S_{\mu(n)}}
   \;=\;
   \sum_{\nu\vdash n} K_{\nu,\mu(n)}\, c_\nu(M),
\]
where $c_\nu(M)$ is the multiplicity of $V^\nu$ in $M$.
\end{proposition}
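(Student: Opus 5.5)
The plan is to reduce the statement to a character computation for permutation modules. By Maschke's theorem (char $K=0$) we can decompose $M\cong\bigoplus_{\nu\vdash n} c_\nu(M)\,V^\nu$. Taking $S_{\mu(n)}$-invariants commutes with direct sums, so
\[
\dim_K M^{S_{\mu(n)}}=\sum_{\nu\vdash n} c_\nu(M)\,\dim_K (V^\nu)^{S_{\mu(n)}} .
\]
It therefore suffices to prove $\dim_K (V^\nu)^{S_{\mu(n)}} = K_{\nu,\mu(n)}$ for each irreducible $V^\nu$.

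For this we use Frobenius reciprocity. The invariants of a representation $W$ under a subgroup $H\subset \Sym(n)$ are
\[
W^H \cong \operatorname{Hom}_H(\mathbf 1_H,\operatorname{Res} W)\cong \operatorname{Hom}_{\Sym(n)}(\operatorname{Ind}_H^{\Sym(n)}\mathbf 1, W).
\]
With $H=S_{\mu(n)}$, the induced representation $\operatorname{Ind}_{S_{\mu(n)}}^{\Sym(n)}\mathbf 1$ is the permutation module $M^{\mu(n)}$ on row-tabloids of shape $\mu(n)$. We then invoke Young's rule \cite[Section 2.11]{Sag01}:
\[
M^{\mu(n)}\cong\bigoplus_{\nu\vdash n}K_{\nu,\mu(n)}V^\nu .
\]
Since the $V^\nu$ are absolutely irreducible over any field of characteristic $0$ (they are defined over $\QQ$), Schur's lemma gives $\dim\operatorname{Hom}(V^\lambda,V^\nu)=\delta_{\lambda\nu}$. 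Hence $\dim_K (V^\nu)^{S_{\mu(n)}}=K_{\nu,\mu(n)}$, and summing over $\nu$ gives the claim.

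One point to be careful about is that $S_{\mu(n)}$ is realized on a particular set of blocks, whereas Young's rule is stated for the standard Young subgroup. Conjugate subgroups have invariant spaces of the same dimension, and in any case the blocks here are consecutive intervals of sizes $\mu(n)_1,\mu(n)_2,\dots$, so they match the standard Young subgroup exactly.

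The main obstacle is only to ensure that Young's rule and absolute irreducibility hold over an arbitrary characteristic-$0$ field $K$, not just over $\mathbb C$. Both follow from the Specht module construction over $\QQ$ together with base change, so nothing further is needed.
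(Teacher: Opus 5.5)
Your proposal is correct and follows essentially the same route as the paper: Frobenius reciprocity to pass to the induced trivial representation $\operatorname{Ind}_{S_{\mu(n)}}^{\Sym(n)}\mathbf{1}$, then Young's rule for the permutation module \cite[Theorem 2.11.2]{Sag01}. The only difference is cosmetic: you decompose $M$ first and argue with Hom spaces and Schur's lemma, while the paper computes directly with character inner products $\langle \chi_\nu,\chi\rangle = c_\nu(M)$.
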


\begin{proof}

This is a standard consequence of Frobenius reciprocity. Let $\chi$ be the character of the $\Sym(n)$-representation $M$, $\chi_\nu$ the character of the irreducible representation $V^\nu$, and $\bm 1$ the character of the trivial representation of $S_{\mu(n)}$. We use the up and down arrow notation for characters of restricted and induced representations following \cite{Sag01}. Then
\begin{align*}
    \dim_K M^{S_{\mu(n)}} &= \langle \bm{1},\, \chi{\downarrow}_{S_{\mu(n)}} \rangle_{S_{\mu(n)}} \\
    &= \langle \bm{1}{\uparrow}^{\Sym(n)},\, \chi \rangle_{\Sym(n)} \\
    &= \Big\langle\sum_{\nu \vdash n} K_{\nu,\mu(n)}\chi_\nu ,\, \chi \Big\rangle_{\Sym(n)} \\
    &= \sum_{\nu\vdash n} K_{\nu,\mu(n)} \, \langle\chi_\nu,\,\chi\rangle_{\Sym(n)},
\end{align*}
and $\langle \chi_\nu,\chi\rangle = c_\nu(M)$. The second equality is Frobenius reciprocity and the third equality is due to the well-known decomposition of the permutation module, see \cite[Theorem 2.11.2]{Sag01}. 
\end{proof}

\begin{theorem}\label{thm_isotypic_rational}
Let $I\subset K[X]$ be a homogeneous symmetric ideal satisfying condition~$(\dagger)$.  Then
for every partition $\mu\vdash n_0$ the isotypic Hilbert series $H^\mu(K[X]/I;\,t)$ is a rational
function.
\end{theorem}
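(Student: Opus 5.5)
The plan is to combine Theorem~\ref{thm_block_sym_rational} with Proposition~\ref{prop_frobenius} and invert the stable Kostka matrix of Lemma~\ref{lemma_stable_kostka}. Fix a degree $d$ and set $M=[K[X_n]/I_n]_d$.

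\textbf{Step 1: invariants of the quotient.} Since $\mathrm{char}\,K=0$, averaging over $S_{\mu(n)}$ is exact. Hence
\[
M^{S_{\mu(n)}}\cong \bigl[K[X_n]^{S_{\mu(n)}}/I_n^{S_{\mu(n)}}\bigr]_d .
\]
By Lemma~\ref{lemma_block_sym_dim} its dimension stabilises, with limit $\SD^\mu(K[X]/I,d)$.

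\textbf{Step 2: Frobenius reciprocity and a uniform size.} Proposition~\ref{prop_frobenius} gives
\[
\dim_K M^{S_{\mu(n)}}=\sum_{\nu\vdash n}K_{\nu,\mu(n)}c_\nu(M).
\]
Now $K_{\nu,\mu(n)}\neq 0$ forces $\nu\trianglerighteq\mu(n)$, so $\nu_1\ge n-n_0+\mu_1$ and the tail $(\nu_2,\nu_3,\dots)$ has size at most $n_0-\mu_1$. I want to write every such $\nu$ as a padding $\rho(n)$ of some $\rho$ in one fixed finite set of partitions of a fixed size. If I pad to size $n_0$, the first part $n_0-|\text{tail}|$ may be smaller than the tail's first part, so the result need not be a partition. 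I will avoid this by replacing $\mu$ with $\mu':=\mu(N)\vdash N$ for $N=2n_0$. This does not change anything, since $\mu'(n)=\mu(n)$ for $n\ge N$.

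Every relevant $\nu$ then equals $\rho(n)$ with $\rho=(N-|\tau|,\tau)\vdash N$, where $\tau$ is the tail of $\nu$. This $\rho$ is a genuine partition because $N-|\tau|\ge n_0\ge|\tau|\ge\tau_1$. The relevant $\rho$ range over the finite set $P$ of partitions of $N$ of this form. $P$ is closed under the same construction applied to any $\rho'\in P$ in place of $\mu'$, so the argument applies uniformly to all members of $P$.

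\textbf{Step 3: linear system and inversion.} For $n\ge 2N$, Lemma~\ref{lemma_stable_kostka} makes all coefficients stable, and I get for every $\sigma\in P$
\[
\dim_K[K[X_n]^{S_{\sigma(n)}}/I_n^{S_{\sigma(n)}}]_d=\sum_{\rho\in P}\overline K_{\rho,\sigma}\,c_{\rho(n),d}(n).
\]
The matrix $(\overline K_{\rho,\sigma})_{\rho,\sigma\in P}$ is a principal submatrix of the unitriangular stable Kostka matrix, taken on a dominance-upward closed set, so it is still unitriangular and invertible over $\ZZ$. Inverting it expresses each $c_{\rho(n),d}(n)$ as a fixed integer combination, independent of $n$ and $d$, of the block-symmetric dimensions. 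Letting $n\to\infty$ then shows that $\lim_n c_{\mu(n),d}(n)$ exists. Summing over $d$ gives
\[
H^\mu(K[X]/I;t)=\sum_{\sigma\in P}(\overline K^{-1})_{\sigma,\mu'}\,H^{S_\sigma}(K[X]/I;t).
\]
This is a finite $\ZZ$-linear combination of series that are rational by Theorem~\ref{thm_block_sym_rational}, since $(\dagger)$ holds. Hence $H^\mu(K[X]/I;t)$ is rational.

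The main obstacle is the bookkeeping in Step 2. The hard part is showing that, uniformly in $n$ and $d$, only finitely many padded shapes enter and that they form a square unitriangular system. The remaining steps should be routine.
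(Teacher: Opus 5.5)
Your proposal is correct and follows the paper's proof: Frobenius reciprocity, replacing $\mu$ by $\mu(2n_0)$ so that every $\nu$ dominating $\mu(n)$ is the padding of a genuine partition, and inverting the unitriangular stable Kostka matrix to write $H^\mu$ as a finite $\ZZ$-combination of the block-symmetric series, which are rational by Theorem~\ref{thm_block_sym_rational}. Your version is slightly more careful than the paper's in two places. You restrict the square system to the closed, dominance-upward set $P$, whereas the paper speaks of the system over all partitions of $n_0$, and for shapes with $\mu_1<n_0-\mu_1$ the padding identity can fail. You also invert at finite $n$ before letting $n\to\infty$, which yields the existence of $\lim_n c_{\mu(n),d}(n)$ rather than presupposing it.
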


\begin{proof}
Fix some $\mu \vdash n_0$. We may assume that $\mu_1 \geq n_0 - \mu_1$, simply by replacing $\mu$ with $\mu(2n_0)$ and noting that this does not change the corresponding block-symmetric Hilbert series.

By Proposition~\ref{prop_frobenius}, we have for $n\geq n_0$ a decomposition of the $S_{\mu(n)}$-invariant submodule
\[
\dim_K\big[ K[X_n]/I_n\big]_d^{S_{\mu(n)}} = \sum_{\nu\vdash n} K_{\nu,\mu(n)}c_{\nu,d}(n)
\]
where, as usual, $c_{\nu,d}(n)$ denotes the multiplicity of the irreducible representation $V^\nu$ in $\big[K[X_n]/I_n\big]_d$. If $\nu$ does not dominate the partition $\mu(n)$, then $K_{\nu,\mu(n)} = 0$. We can exclude all such cases from our summation. Otherwise, we have $\nu \trianglerighteq \mu(n)$ and in particular $\nu_1 \geq \mu_1 +n-n_0$. Using our assumption $\mu_1 \geq n_0 - \mu_1$ in the second step and $n \geq \nu_1 +  \nu_2$ in the third step, we get
\begin{align*}
    \nu_1 &\geq \mu_1 +n-n_0 \\
    &\geq  n_0 - \mu_1 + n - n_0 \\
    &\geq -\mu_1 + \nu_1 + \nu_2 \\
    &\geq -\mu_1 + \mu_1 +n - n_0 + \nu_2 \\
    &= \nu_2 + n - n_0.
\end{align*}
Hence we can define $\nu' := (\nu_1 - n + n_0,\, \nu_2,\,\nu_3,\,\dots)$, which is a partition of $n_0$. We have $\nu'(n) = \nu$ by construction.

Following this argument, we see that for any $n$ large enough,
\[
\dim_K \big[ K[X_n]/I_n\big]_d^{S_{\mu(n)}} = \sum_{\nu\vdash n_0} K_{\nu(n),\mu(n)}c_{\nu(n),d}(n)
\]
which is a sum with a constant number of terms as $n$ grows. This lets us compute the limit for $n\rar \infty$, in terms of block-symmetric dimensions and stable Kostka numbers,
\begin{equation}\label{eq_kostka_system}
\SD^{\mu}(K[X]/I,\,d)
   \;=\;
   \sum_{\nu\vdash n_0}\overline{K}_{\nu,\mu}\;\lim_{n\rar\infty}c_{\nu(n),d}(n)
\end{equation}

The system \eqref{eq_kostka_system} for all $\mu \vdash n_0$ has coefficient matrix $(\overline{K}_{\nu,\mu})$, which is unitriangular with respect to
dominance order and hence invertible over $\ZZ$.
Writing the inverse as
$(\overline{K}^{-1})_{\mu,\nu}\in\ZZ$, we obtain
\[
   \lim_{n\rar\infty}c_{\nu(n),d}(n)
   \;=\;
   \sum_{\mu\vdash n_0}(\overline{K}^{-1})_{\mu,\nu}\;\SD^{\mu}(K[X]/I,\,d).
\]
As a generating function in~$t$:
\[
   H^{\nu}(K[X]/I;\,t)
   \;=\;
   \sum_{\mu\vdash n_0}(\overline{K}^{-1})_{\mu,\nu}\;H^{S_\mu}(K[X]/I;\,t).
\]
By Theorem~\ref{thm_block_sym_rational}, each $H^{S_\mu}(K[X]/I;\,t)$ is rational. A finite
$\ZZ$-linear combination of rational functions is rational.
\end{proof}

\begin{remark}\label{rem_isotypic}
\begin{enumerate}
\item The case $\nu=(1^{n_0})$ yields the isotypic Hilbert series for the sign representation:
   $\nu(n)=(n-n_0+1,1^{n_0-1})$, a hook partition.  This is the second ``extremal'' case
   mentioned in the introduction.
\item The Kostka inversion expresses individual multiplicities as integer linear combinations
   of block-symmetric dimensions. Since the inverse Kostka matrix has entries in $\ZZ$, the coefficients of $H^\nu$ are integers, as expected from representation
   theory.
\item An analogous result holds under the geometric condition of Corollary~\ref{geometric_cor}:
   if $V_{\overline{K}}(I)\subset\Delta_p\cup H_q$, then all isotypic Hilbert series are rational.
\end{enumerate}
\end{remark}

\begin{theorem}\label{thm_iso_polynomial_growth}
    Let $I\subset K[X]$ be any nonzero homogeneous symmetric ideal. Fix any partition $\mu \vdash n_0$. Then the corresponding stable multiplicities have at most polynomial growth,
    \[
    \lim_{n\rar \infty} c_{\mu(n), d}(n) = \mathcal{O}(d^k)
    \]
    for some $k\in \NN$ depending on $I$ and $\mu$.
\end{theorem}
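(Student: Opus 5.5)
The plan is to mirror the proof of Corollary~\ref{cor_sym_polynomial_growth}, combining it with the block-symmetric machinery and the Kostka argument from the proof of Theorem~\ref{thm_isotypic_rational}. First we bound the stable multiplicity by a block-symmetric dimension. Replacing $\mu$ by $\mu(2n_0)$ if necessary (this does not change the series), we may assume $\mu_1\ge n_0-\mu_1$. Proposition~\ref{prop_frobenius} applied to $M=[K[X_n]/I_n]_d$ gives
\[
\dim_K[K[X_n]/I_n]_d^{S_{\mu(n)}}=\sum_{\nu\vdash n}K_{\nu,\mu(n)}c_{\nu,d}(n).
\]
All terms are nonnegative and $K_{\mu(n),\mu(n)}=1$, so $c_{\mu(n),d}(n)\le \dim_K[K[X_n]/I_n]_d^{S_{\mu(n)}}$. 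Letting $n\to\infty$ and using Lemma~\ref{lemma_block_sym_dim}, we get $\lim_n c_{\mu(n),d}(n)\le \SD^\mu(K[X]/I,d)$. This avoids Kostka inversion entirely, so no condition $(\dagger)$ is needed. It therefore suffices to show that $\SD^\mu(K[X]/I,d)$ grows polynomially in $d$.

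Next we reduce to the hook Specht ideal. Let $f\in I$ be nonzero of minimal degree $r$, and let $J$ be the hook Specht ideal for $(\infty,1^r)$. By \cite[Theorem 2]{MRV21}, as in Corollary~\ref{cor_sym_polynomial_growth}, $J_n\subset I_n$ for $n$ large. Taking $S_{\mu(n)}$-invariants gives $J_n^{S_{\mu(n)}}\subset I_n^{S_{\mu(n)}}$, hence $\SD^\mu(K[X]/I,d)\le \SD^\mu(K[X]/J,d)$. So we only need polynomial growth of the block-symmetric dimensions for $J$.

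To bound $\SD^\mu(K[X]/J,d)$, we use the tensor decomposition $K[X_n]^{S_{\mu(n)}}\cong K[X_{n'}]^{\Sym}\otimes R_\mu$ with $n'=n-n_0+\mu_1$. The variables of the first block generate a copy of the hook Specht ideal in $K[X_{n'}]$. Thus $J_{n'}^{\Sym}\otimes R_\mu\subset J_n^{S_{\mu(n)}}$, since $J_{n'}^{\Sym}\subset J_n$ is invariant under $S_{\mu(n)}$ and $J_n$ is an ideal. This gives a surjection
\[
\bigl(K[X_{n'}]^{\Sym}/J_{n'}^{\Sym}\bigr)\otimes R_\mu\twoheadrightarrow K[X_n]^{S_{\mu(n)}}/J_n^{S_{\mu(n)}}.
\]
In the limit, the Hilbert series of the right-hand side is coefficientwise bounded by the product $H^{\Sym}(K[X]/J;t)\cdot H(R_\mu;t)$.

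By Theorem~\ref{thm_hook_specht_main} the first factor has coefficients $\mathcal O(d^{2r-1})$. The second factor is the Hilbert series of a polynomial ring in $n_0-\mu_1$ variables, so its coefficients are $\mathcal O(d^{n_0-\mu_1-1})$. The coefficients of a product of two series with polynomially bounded coefficients are bounded by the convolution, which is $\mathcal O(d^{2r+n_0-\mu_1})$. This gives the claim with $k=2r+n_0-\mu_1$ (computed for the adjusted $\mu$).

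The main obstacle is the containment $J_{n'}^{\Sym}\otimes R_\mu\subset J_n^{S_{\mu(n)}}$, together with the claim that passing to the limit is compatible with the surjection. The direct-limit maps $\phi^\mu_{n+1}\circ\iota_s$ shift the variables, so identifying the first block must be done with care. If this becomes awkward, the fallback is to compare dimensions at each fixed finite $n$ and only take limits at the end: the surjection holds for every $n$, and $\dim[K[X_{n'}]^{\Sym}/J_{n'}^{\Sym}]_e$ stabilises to $\SD(K[X]/J,e)$.
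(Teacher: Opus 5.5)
Your proof is correct. Its core matches the paper's: the surjection from $\bigl(K[X_{n'}]^{\Sym}/J_{n'}^{\Sym}\bigr)\otimes R_\mu$ onto the block-invariant quotient is exactly the paper's map $F$. As in the paper, it is compared dimension-wise at each finite $n$ and followed by a convolution estimate.

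The real difference is your first step. The paper uses the Kostka inversion from the proof of Theorem~\ref{thm_isotypic_rational}. That writes the stable multiplicity as a fixed $\ZZ$-linear combination, with signs, of several block-symmetric dimensions, and then each of these is bounded. You use only positivity in Proposition~\ref{prop_frobenius}: $K_{\mu(n),\mu(n)}=1$ and every other term is nonnegative. This gives the one-sided estimate $c_{\mu(n),d}(n)\le\dim_K[K[X_n]/I_n]_d^{S_{\mu(n)}}$, which equals the dimension of the block-invariant quotient because taking invariants is exact in characteristic $0$. Your route is leaner in several ways:
\begin{enumerate}
\item only one block-symmetric dimension has to be bounded;
\item the normalisation $\mu_1\ge n_0-\mu_1$ becomes superfluous;
\item the case $\ell=1$ needs no separate treatment, since then $R_\mu=K$;
\item you obtain an explicit exponent.
\end{enumerate}
In exchange, the inversion route proves that $\lim_n c_{\mu(n),d}(n)$ exists, since the inverted finite-$n$ system expresses it through eventually constant sequences. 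Your inequality presupposes this limit. That is harmless, because the statement presupposes it too, and your bound holds for the $\limsup$ anyway.

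Three smaller remarks.
\begin{enumerate}
\item Replacing $I$ by the hook Specht ideal $J$ at the block-symmetric level before applying the surjection is fine but unnecessary. The paper applies the surjection to $I$ directly and then uses Corollary~\ref{cor_sym_polynomial_growth}.
\item The obstacle you anticipated with the shifted transition maps does not arise. Your fallback, comparing dimensions at each finite $n$ and only then letting $n\to\infty$, is exactly what the paper does.
\item The convolution of $\mathcal O(d^{2r-1})$ with $\mathcal O(d^{s-1})$, where $s=n_0-\mu_1$, already gives $\mathcal O(d^{2r+s-1})$. So your $k=2r+s$ is valid but one larger than needed.
\end{enumerate}
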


\begin{proof}
    Let $I\subset K[X]$ be any nonzero homogeneous symmetric ideal and fix some partition $\mu\vdash n_0$. We write $\mu = (\mu_1,\dots,\mu_\ell)$. If $\ell = 1$ we are in the case of the symmetric Hilbert series and Corollary \ref{cor_sym_polynomial_growth} gives the result.
    
    Assume that $\ell \geq 2$. Following the proof of Theorem \ref{thm_isotypic_rational}, we see that the stable multiplicity value $\lim_{n\rar\infty} c_{\mu(n),d}(n)$ is a linear combination of block-symmetric dimensions $\SD^\nu(K[X]/I,d)$. The coefficients and partitions $\nu$ appearing in this linear combination are independent of $d$. It therefore suffices to show polynomial growth of block-symmetric dimensions of $K[X]/I$.

    As in the proof of Corollary \ref{cor_sym_polynomial_growth}, let $r$ be the minimal degree of nonzero homogeneous polynomials appearing in $I$. Then the symmetric dimension grows at most polynomially,
    \[
    \SD(K[X]/I,d) = \mathcal{O}(d^{2r-1}).
    \]

    We denote $s = n_0 - \mu_1$ and $\nu = (\mu_2,\dots,\mu_\ell)\vdash s$ for the \emph{tail partition} of $\mu$. By the assumption $\ell \geq 2$, it is not the zero partition and $s\geq 1$.
    Consider the map of graded vector spaces,
    \begin{gather*}
    F: K[X_{n-s}]^\Sym/I_{n-s}^\Sym\otimes_K K[x_{n-s+1},\dots,x_n]^{S_{\nu}} \longrightarrow K[X_n]^{S_{\mu(n)}}/I_{n}^{S_{\mu(n)}} \\
        \overline{f_1}\otimes f_2 \mapsto \overline{f_1f_2} 
    \end{gather*}
    The map is clearly linear and graded. We check that this map is well defined and surjective. If $f_1\in I_{n-s}^\Sym$ and $f_2 \in K[X_{s}]^{S_\nu}$, then $f_1f_2 \in I_n^{S_{\mu(n)}}$ since $I_{n-s}^\Sym \subset I_n^{S_{\mu(n)}}$ as a vector space. This shows that the map is well defined on equivalence classes.

    For surjectivity, note that $K[X_{n-s}]^{\Sym} \otimes_K K[X_s]^{S_{\nu}} \cong K[X_n]^{S_{\mu(n)}}$, hence we only need to show that all equivalence classes of elementary tensors of the form $f_1\otimes f_2$ lie in the image of $F$. This is clear from the definition.

    Since $F$ is a graded linear map, we obtain an inequality of dimensions
    \begin{align*}
        \dim_K \big[ K[X_n]^{S_{\mu(n)}}/I_n^{S_{\mu(n)}} \big]_d &\leq \dim_K\big[ K[X_{n-s}]^\Sym/I_{n-s}^{\Sym}\otimes_K K[x_{n-s+1},\dots,x_n]^{S_{\nu}} \big]_d \\
        &= \sum_{i= 0}^d \dim_K\big[K[X_{n-s}]^\Sym/I_{n-s}^\Sym\big]_i\cdot\dim_K\big[ K[X_s]^{S_\nu} \big]_{d-i}. 
    \end{align*}

    For $n\rar\infty$ we compare the generating function of the sequences and see that the growth of the block symmetric dimension $\SD^\mu(K[X]/I,d)$ is at most polynomial since $\SD(K[X]/I,d) = \mathcal{O}(d^{2r-1})$ by Corollary \ref{cor_sym_polynomial_growth} and $\dim_K\big[ K[x_{n-s+1},\dots,x_n]^{S_\nu} \big]_{d} = \dim_K\big[ K[X_s]^{S_\nu}\big]_{d} = \mathcal{O}(d^{s-1})$. This concludes the proof.
\end{proof}

\subsection{Final remarks}\label{subsec_final_rem}
We close this article with a discussion of rationality of the isotypic Hilbert series outside of condition $(\dagger)$ and possible directions of future work.

The zero ideal $I = (0) \subset K[X]$ is certainly a symmetric ideal and it is not difficult to compute its symmetric Hilbert series
\[
H^{\Sym}(K[X];\,t) = H(\Lambda;\,t) = \prod_{d=1}^\infty \frac{1}{1-t^d}
\]
which is not a rational function. Using Kostka inversion and the block-symmetric Hilbert series, it follows that the isotypic Hilbert series $H^\mu(K[X];\,t)$ is not a rational function for any partition $\mu$ either.

However, counterexamples for rationality of the isotypic Hilbert series begin and end here. Moreover, this behaviour should be expected: $K[X]$ is the coordinate ring of \emph{infinite} affine space $\bA^\infty_K$ while the vanishing set of any nonzero symmetric ideal $I \subset K[X]$ corresponds to a \emph{finite} dimensional subvariety of $\bA_K^\infty$. This is a fundamental difference between $I = (0)$ and all other symmetric ideals in $K[X]$.

The main ingredient in the proof of Theorem \ref{thm_main} was the decomposition of $\Lambda/\init(I_\infty)$ into finitely many spaces of fixed rank, each equipped with a different module structure over a finite polynomial ring.
This is mirrored in the proof of Theorem \ref{thm_hook_specht_main} where we decompose $\Lambda/\init(I_{\infty})$ into parts of fixed $<_1$-chain divisibility. 
Here, there is no clear module structure for each $P_k$, but we were able to show rationality of each part by a complicated counting argument. That such a counting argument yields a rational generating function should be taken as evidence that there is more underlying structure to be uncovered.
Moreover, it seems plausible that once the structure of the hook Specht ideal quotients is better understood, it would allow for the extension of rationality to all symmetric ideals containing a hook Specht ideal. 

Taking the above discussion as well as the lack of counterexamples into consideration, we propose the following.
\begin{conjecture}\label{con_rational}
    Let $I\subset K[X]$ be any homogeneous symmetric ideal. Then the isotypic Hilbert series $H^\mu(K[X]/I;t)$ is a rational function for any partition 
    $\mu$ if $I \neq (0)$.
\end{conjecture}

For future work, a formulation of the theory of isotypic Hilbert series for finitely generated graded modules of graded $FI$-algebras would be of interest.
In particular, the infinite polynomial ring with a diagonal permutation action on more than one infinite set of variables is an interesting generalization.

The polynomial ring in $k$ infinite sets of variables $K[X^1,\,\dots,\,X^k]$ with $X^j = \{x_{i,j}\mid i\geq 1\}$ has a natural action of $\Sym(\infty)$ on the first index given by $\sigma\cdot x_{i,j} = x_{\sigma(i),j}$.
The definitions for the symmetric and isotypic Hilbert series generalize without much difficulty to quotients of such rings. This is also the more general setup in works such as \cite{KLS17,KR25,NR17} and ideals closed under this $\Sym(\infty)$-action still satisfy the ascending chain condition.

We can, however, immediately note that we have no hope of Conjecture \ref{con_rational} holding for such a group action.
Nonetheless, it seems reasonable to believe that the isotypic Hilbert series will allow for a finite description in terms of rational functions and a finite set of ``basic" formal power series as building blocks.
In particular, general rationality of the isotypic Hilbert series of $K[X^1,\dots,X^k]/I$ under the assumption that the vanishing set $V(I)$ is finite dimensional seems possible.

We illustrate this with three examples for the infinite polynomial ring in two sets of variables.

\begin{example}
	Denote the ``basic" formal power series
    \[
    P(t) = \prod_{d = 1}^\infty \frac{1}{1-t^d}.
    \]

    We consider homogeneous and $\Sym(\infty)$-closed ideals in $K[X,Y] = K[x_{i},y_i \mid i \geq 1]$
    \[
    I^{(1)} = (x_iy_j \mid i,j \geq 1),\quad I^{(2)} = (x_iy_i \mid i \geq 1),\quad I^{(3)} = (x_ix_j, \,y_iy_j \mid i,j\geq 1, i\neq j).
    \]

    As in the case of only one set of variables, we denote the truncation at the first $n$ variables for $X$ and $Y$  by $K[X_n,Y_n]$. Then $I^{(\ell)}_n = I^{(\ell)} \cap K[X_n,Y_n]$ for $\ell = 1,2,3$.
    
    The ring $K[X_n,Y_n]/I^{(1)}_n$ contains exactly those monomials solely in either $x_i$ or $y_i$. Thus
    \[
    \big[K[X_n,Y_n]/I^{(1)}_n\big]_d \cong \big[K[X_n]\big]_d\oplus \big[K[Y_n]\big]_d
    \]
    for $d\geq 1$ and we can conclude that $H^\Sym(K[X,Y]/I^{(1)};\,t) = 2P(t) -1$, since $P(t)$ is the symmetric Hilbert series of $K[X]$.

    The analysis for the second case is subtly different. The quotient $K[X_n,Y_n]/I^{(2)}_n$ contains exactly those monomials not divisible by both $x_i$ and $y_i$ for each $i$. Averaging such a monomial over $\Sym(n)$ always gives a product of monomial symmetric polynomials 
    \[
    m_\lambda(x_1,\dots,x_n)m_\mu(y_1,\dots,y_n)
    \]
    modulo $I^{(2)}_n$. The limit is therefore isomorphic to the tensor product $\Lambda \otimes_K \Lambda$. This finally gives us the symmetric Hilbert series $H^\Sym(K[X,Y]/I^{(2)};\,t) = P(t)^2$.

    For the third case, we note that, excluding constants, all monomials in $K[X_n,Y_n]/I^{(3)}_n$ are either of the form $x_i^a, y_i^b$ for $a,b \geq 1$, which we call first form, or of the form $x_i^ay_i^b,x_i^ay_j^b$ for $i \neq j$ and $a,b \geq 1$, which we call second form. This holds for any $n \geq 2$, and averaging over $\Sym(n)$ lets us find a basis for $\big(K[X_n,Y_n]/I^{(3)}_n\big)^\Sym$. This yields the Hilbert series
    \[
    H\Big({\big(K[X_n,Y_n]/I^{(3)}_n}\big)^\Sym;\,t\Big) = 1 +\underbrace{ \frac{2t}{1-t} }_\textnormal{first form} + \underbrace{ \frac{2t^2}{(1-t)^2} }_\textnormal{second form} = \frac{1+t^2}{(1-t)^2}.
    \]
    This Hilbert series is stable for all $n \geq 2$ and thus equal to the symmetric Hilbert series $H^\Sym(K[X,Y]/I^{(3)};\,t)$. It is in particular a rational function.
\end{example}
The three ideals suggest that in the multisymmetric setting rationality is governed by
one geometric quantity: the dimension of the set of values a sequence attains. For a
point $p\in V(I)(\overline K)$, viewed as a sequence $(p_i)_{i\ge1}$ in
$\bA^k(\overline K)$ with $\Sym(\infty)$ permuting the index $i$, let
$V_p\subseteq\bA^k_{\overline K}$ be the Zariski closure of its set of values
$\{p_i\mid i\ge1\}$, the \emph{value variety} of $p$ (cf.\ \cite[Def.~3.2]{KR25}, whose
ambient dimension $n$ is our number of variable sets $k$), and set
$\dim V(I):=\sup_p\dim V_p\le k$. Since a zero-dimensional closed subset of $\bA^k$ is
finite, $\dim V(I)=0$ holds precisely when every point of $V(I)$ attains only finitely
many values.

The transcendental contribution of a $d$-dimensional value component is a universal
series. For $d\ge0$ write
\[
   P_d(t) := \prod_{m\ge 1}\bigl(1-t^m\bigr)^{-\binom{m+d-1}{d-1}}.
\]
This is the symmetric Hilbert series $H^\Sym(K[X^1,\dots,X^d];t)$ of the free quotient
on $d$ sets of variables: a basis of the invariants is given by the finite multisets of
nonzero exponent vectors $\alpha\in\NN^d$, of which $\binom{m+d-1}{d-1}$ have degree
$m=|\alpha|$. In particular $P_0=1$ and $P_1=P$, and each $P_d$ with $d\ge1$ is
non-rational, having a pole at every root of unity.

\begin{conjecture}\label{con_rational_strong}
Let $I\subset K[X^1,\dots,X^k]$ be a homogeneous $\Sym(\infty)$-invariant ideal. Then
for every partition $\mu$,
\[
   H^{\mu}\bigl(K[X^1,\dots,X^k]/I;\,t\bigr)
   = \sum_{j} R_j(t)\,\prod_{i} P_{d_{ji}}(t),
\]
with $R_j\in\QQ(t)$ and each $d_{ji}\ge 1$ the dimension of a positive-dimensional
component of a value variety of $V(I)$. In particular $H^\mu$ is rational for every
$\mu$ if and only if $\dim V(I)=0$.
\end{conjecture}

This fits every computed case: $I^{(1)},I^{(2)}$ give $2P_1-1$ and $P_1^2$, the sum and
the product reflecting whether the two branches of $\{XY=0\}$ are populated by one
global site or independently, while $I^{(3)}$ gives a rational series. It also recovers
Conjecture~\ref{con_rational} at $k=1$: a point with $\dim V_p\ge 1$ has $V_p=\bA^1$,
whose orbit closure is all of $\bA^1_\infty$ \cite[Thm.~3.5]{KR25}, so $\dim V(I)\ge1$
forces $I=(0)$; equivalently, by the classification of invariant primes
\cite[Ex.~3.13]{KR25}, the zero ideal is the only one of positive dimension. This is
what singles out $(0)$ at the start of the section. The sufficient direction seems
within reach, since for $\dim V(I)=0$ the methods of Theorem~\ref{thm_main} and
Section~\ref{sec_isotypic} should apply with little change; necessity would follow from
the $P_d$, $d\ge1$, being non-cancellable in the $\ZZ$-combinations from Kostka
inversion, which we leave open.
\section{Conclusion and open questions}\label{sec_open_quest}
We have shown that the symmetric and isotypic Hilbert series of a homogeneous symmetric
ideal are rational under condition~$(\dagger)$, and the same for the symmetric Hilbert series
for the hook Specht ideals outside of it; along the way the dimension $\dim V(I)$ of the value variety emerged as the quantity
that should govern rationality in general. Several natural questions remain.

The most immediate concerns the mechanism behind rationality itself. Under
condition~$(\dagger)$ we obtained the symmetric Hilbert series as an instance of the
Hilbert--Serre theorem, by exhibiting $\Lambda/\init(I_\infty)$ as a finite direct sum
of finitely generated modules over polynomial rings. For the hook Specht ideals no such
module structure was available, and rationality came instead from an explicit counting
argument over the strata~$P_k$. That two such different routes lead to the same
conclusion suggests an underlying structure we have not yet identified. It would be
highly interesting to place the hook case---and, more ambitiously, every ideal with
$\dim V(I)=0$---on a common module-theoretic footing, realising $\Lambda/I_\infty$ as a
finitely generated graded module over a polynomial ring whose dimension can be read off
the geometry of~$V(I)$. Such a description would account for rationality uniformly
rather than case by case, and would render the polynomial growth rate of
$\SD(K[X]/I,d)$ a computable invariant of that geometry. At present we have such a
structure only under~$(\dagger)$, and the hook computation stands as evidence that it
should exist more widely.

The hook ideals also point to a concrete next computation. They are the Specht ideals
of the shapes $(\infty,1^r)$, and it is natural to ask whether the two-row shapes
$(\infty,2^a, 1^b)$, and eventually arbitrary shapes, admit a comparable treatment. Our
argument rested on identifying the symmetric limit with a determinantal ideal of an
infinite Hankel matrix, for which a Gröbner basis is known; whether the higher Specht
ideals carry an analogous structured-matrix description seems worth pursuing, and would
considerably enlarge the class of ideals whose symmetric Hilbert series can be written
down explicitly.

Finally, the theory invites a real counterpart. For a symmetric basic closed
semi-algebraic set the cohomology stabilises as an $\Sym(n)$-module \cite{BR20}, and one
may ask whether the resulting stable Betti-number multiplicities assemble into an
isotypic Poincaré series, rational under a dimension hypothesis of the kind isolated
here. In a different direction, the whole construction rests on the ring of symmetric
functions and the combinatorics of partitions; it would be interesting to see how much
survives when $\Sym(\infty)$ is replaced by the hyperoctahedral tower $B_\infty$, where
partitions give way to pairs of partitions. We hope to return to some of these questions
elsewhere.
\printbibliography

\end{document}